\newcommand{\sgn}{\text{sgn}}
\newcommand{\outerprod}{\ensuremath{\mathbin{\text{\raisebox{1.2pt}{$\scriptscriptstyle{\bm\otimes}$}}}}} 
\newcommand{\Diag}[1]{\textup{Diag}{(#1)}}
\newcommand{\condWm}{$\text{\textup{(W{\scriptsize m}}}$$\text{\textup{)}}$\ }
\newcommand\Algphase[1]{%
\vspace*{-.4\baselineskip}\Statex\hspace*{\dimexpr-\algorithmicindent-2pt\relax}\rule{\textwidth}{0.4pt}%
\Statex\hspace*{-\algorithmicindent}{#1}%
\vspace*{-.7\baselineskip}\Statex\hspace*{\dimexpr-\algorithmicindent-2pt\relax}\rule{\textwidth}{0.4pt}%
}
\newtheorem{theorem}{Theorem}
\newtheorem{example}[theorem]{Example}
\newtheorem{definition}[theorem]{Definition}
\newtheorem{proposition}[theorem]{Proposition}
\newtheorem{lemma}[theorem]{Lemma}
\title{Canonical polyadic decomposition of third-order tensors: relaxed uniqueness conditions and algebraic algorithm\footnote{Research supported by: (1) Research Council KU Leuven: C1 project c16/15/059-nD, CoE PFV/10/002 (OPTEC), PDM postdoc grant; (2) F.W.O.:  project  G.0830.14N, G.0881.14N;  (3) the Belgian Federal Science Policy Office: IUAP P7 (DYSCO II,  Dynamical systems, control
 and optimization,  2012-2017); (4)  EU: The research leading to these results has received funding from the European Research Council under the European Union's Seventh Framework Programme (FP7/2007-2013) / ERC Advanced Grant: BIOTENSORS (no.  339804). This paper reflects only the authors' views and the Union is not liable for any use that may be made of the contained information.}}
\author{Ignat Domanov and Lieven De Lathauwer}
\begin{document}

\maketitle

\begin{abstract}
Canonical Polyadic Decomposition (CPD) of a third-order tensor is a minimal decomposition
into a sum of rank-$1$ tensors.  
We find new mild deterministic conditions for the uniqueness of individual rank-$1$ tensors in  CPD  and 
  present an  algorithm to recover them.  We call the   algorithm   ``algebraic'' because
  it  relies only on standard  linear algebra. It does not involve more advanced procedures than the computation of the null space of a matrix and  eigen/singular value decomposition. 
    Simulations  indicate that the new conditions for uniqueness  and the working assumptions for the algorithm hold for a randomly generated  $I\times J\times K$ tensor of rank $R\geq K\geq J\geq I\geq 2$ if $R$ is  bounded as  $R\leq (I+J+K-2)/2 + (K-\sqrt{(I-J)^2+4K})/2$ at least for the dimensions that we have tested.
  This improves upon the  famous Kruskal bound for uniqueness $R\leq (I+J+K-2)/2$ as soon as $I\geq 3$.

	In the particular case $R=K$, the new bound above is equivalent to the bound $R\leq(I-1)(J-1)$ which is known to be necessary and sufficient
	for the generic uniqueness of the CPD. An existing algebraic algorithm (based on simultaneous diagonalization of a set of matrices) 
	  computes the CPD  under the more restrictive constraint $R(R-1)\leq I(I-1)J(J-1)/2$ (implying that $R<(J-\frac{1}{2})(I-\frac{1}{2})/\sqrt{2}+1$). On the other hand,
	  optimization-based algorithms fail to compute the CPD in a reasonable amount of time even in the low-dimensional case $I=3$, $J=7$, $K=R=12$.
	  By comparison, in our approach the computation  takes  less than $1$ sec.
	  We demonstrate  that, at least for $R\leq 24$,  our algorithm can  recover the rank-$1$ tensors in the CPD up to  $R\leq(I-1)(J-1)$.
\end{abstract}

{\bf Keywords:}
canonical polyadic decomposition, CANDECOMP/PARAFAC decomposition, CP decomposition, tensor, uniqueness of CPD, uni-mode uniqueness, eigenvalue decomposition, singular value decomposition

{\bf AMS subject classifacation:} 15A69,  15A23


\section{Introduction}
Let $\mathbb F$ denote the field of real or complex numbers
and $\mathcal T\in\mathbb F^{I\times J\times K}$ denote a third-order tensor with entries $t_{ijk}$. 
By definition,  $\mathcal T$ is {\em rank-$1$} if it equals the outer product of three nonzero vectors  $\mathbf a\in\mathbb F^I$, $\mathbf b\in\mathbb F^J$, and  $\mathbf c\in\mathbb F^K$: $\mathcal T=\mathbf a\outerprod\mathbf b\outerprod\mathbf c$, which means that  $t_{ijk}=a_ib_jc_k$ for all values of indices. 

 A {\em Polyadic  Decomposition} of  $\mathcal T$ expresses $\mathcal T$ as a  sum of rank-$1$ terms:
\begin{equation}
\mathcal T=\sum\limits_{r=1}^R\mathbf a_r\outerprod \mathbf b_r\outerprod \mathbf c_r,\qquad
 \left(\text{or}\ t_{ijk} = \sum\limits_{r=1}^Ra_{ir}b_{jr}c_{kr}\right)
 \label{eqintro2}
\end{equation}
where 
$$
\mathbf a_r = [a_{1r}\,\dots\,a_{Ir}]^T \in \mathbb F^{I},\ 
\mathbf b_r = [b_{1r}\,\dots\,b_{Jr}]^T \in \mathbb F^{J},\ 
\mathbf c_r = [c_{1r}\,\dots\,c_{Kr}]^T \in \mathbb F^{K}.
$$
If the number $R$ of rank-1 terms in \eqref{eqintro2} is minimal, then
\eqref{eqintro2} is called the {\em Canonical Polyadic  Decomposition} (CPD) of  $\mathcal T$ and
$R$  is called the {\em rank }of  $\mathcal T$ (denoted by $r_{\mathcal T}$).
It is clear that in (\ref{eqintro2}) the rank-1 terms can be arbitrarily permuted and that vectors within the same rank-1 term can be arbitrarily scaled provided the overall rank-1 term remains the same. The CPD of a tensor {\em is unique } when it is only subject to these trivial indeterminacies.

We write \eqref{eqintro2} as $\mathcal T=[\mathbf A,\mathbf B,\mathbf C]_R$, where
the matrices
$\mathbf A :=\left[\begin{matrix}\mathbf a_1&\dots&\mathbf a_R\end{matrix}\right] \in\mathbb F^{I\times R}$, $\mathbf B :=\left[\begin{matrix}\mathbf b_1&\dots&\mathbf b_R\end{matrix}\right]\in\mathbb F^{J\times R}$ and $\mathbf C :=\left[\begin{matrix}\mathbf c_1&\dots&\mathbf c_R\end{matrix}\right]\in\mathbb F^{K\times R}$
are called the  {\em first}, {\em second} and {\em third factor matrix} of $\mathcal T$, respectively.
It may happen that the CPD of a tensor $\mathcal T$ is not unique but that nevertheless,  for any  two CPDs  $\mathcal T=[\mathbf A,\mathbf B,\mathbf C]_R$ and $\mathcal T=[\bar{\mathbf A},\bar{\mathbf B},\bar{\mathbf C}]_R$, 
the factor matrices in a certain mode, say the matrices $\mathbf C$ and $\bar{\mathbf C}$,  coincide up  to column permutation and scaling.
We say that  {\em the third  factor matrix of $\mathcal T$ is unique}. For instance, it is well known that if  two or more columns of the third factor matrix of $\mathcal T$ have collinear vectors, then the CPD is not unique. Nevertheless, the third factor matrix can still be unique \cite[Example 4.11]{PartI}. 

 The literature shows some variation in terminology.
 The CPD was introduced by F.L. Hitchcock in \cite{Hitchcock} 
 and was later referred to
 as Canonical Decomposition (CANDECOMP) \cite{1970_Carroll_Chang}, Parallel Factor Model (PARAFAC)
 \cite{Harshman1970,1994HarshmanLundy}, and Topographic Components Model \cite{1988Topographic}. 
Uniqueness of one factor matrix is called 
{\em uni-mode uniqueness} in \cite{GuoMironBrieStegeman, Zhang20131918}.
Uniqueness of the CPD is often called {\em essential uniqueness} in engineering papers and  {\em specific identifiability} in algebraic geometry papers.
It is its uniqueness properties that make  CPD a basic tool for signal
separation and data analysis, with many concrete applications in telecommunication,
array processing, machine learning, etc. \cite{LievenCichocki2013, ComoJ10,KoldaReview,Lieven-Nikos_overview}.

The contribution of this paper is twofold. First, we find very mild conditions for uniqueness of CPD and, second, we provide 
an  {\em algebraic } algorithm  for its computation, i.e. an algorithm that recovers the CPD from $\mathcal T$ by means of conventional linear algebra (basically by  taking the orthogonal  complement of a subspace and  computing generalized  eigenvalue decomposition (GEVD)).

Algebraic algorithms are important from a computational point view in the following sense.
In practice,  the factor matrices of $\mathcal T$ are most often obtained  as 
the solution of the  optimization problem
\begin{equation*}
\min \|\widehat{\mathcal T}-[\mathbf A,\mathbf B,\mathbf C]_R\|,\qquad \text{ s.t. }\quad
 \mathbf A\in\mathbb F^{I\times R},\ \mathbf B\in\mathbb F^{J\times R},\ \mathbf C\in\mathbb F^{K\times R},
\end{equation*}
 where $\|\cdot\|$ denotes  a suitable  norm \cite{Sorber}.   
 The limitations of this approach are not very well-known. Algebraic algorithms may provide a good initial guess.
 In Example \ref{example:manyinits} we illustrate that even in a small-scale problem such as the CPD of a rank-$12$ tensor of dimensions $3\times 7\times 12$, the optimization approach may require many initializations and iterations,  although the solution can be computed algebraically without a problem.

{\em Basic notation and conventions.} 
Throughout the paper  $C_n^k$ denotes the binomial coefficient,  
$$
C_n^k=\begin{cases}
\frac{n!}{k!(n-k)!},&\text{if } k\leq n,\\
0,& \text{if } k>n;
\end{cases}
$$ $r_{\mathbf A}$, $\textup{range}(\mathbf A)$, and  $\textup{ker}(\mathbf A)$ denote the rank, the range, and the null space of a matrix $\mathbf A$, respectively; $k_{\mathbf A}$ (the $k$-rank  of  $\mathbf A$ \cite[p. 162]{HarshmanLundy1984}) is the largest number  such that every subset of $k_{\mathbf A}$ columns of the matrix $\mathbf A$ is  linearly independent; 
``$\odot$'' and ``$\otimes$'' denote the Khatri-Rao  and  Kronecker product, respectively:
 \begin{align*}
 \mathbf A\odot\mathbf B &= [\mathbf a_1\otimes\mathbf b_1\ \dots\ \mathbf a_R\otimes\mathbf b_R ],\\
 \mathbf a\otimes\mathbf b &= [a_1b_1\dots a_1b_j\ \dots\ a_Ib_1\dots a_Ib_J]^T.
 \end{align*}
It is well known that PD \eqref{eqintro2} can be rewritten in a matrix form as 
  \begin{equation}\label{eq:(1.7)}
  \mathbf R_{1,0}(\mathcal T):=
  \left[
  \begin{matrix}
  \mathbf T_1\\
  \vdots\\
  \mathbf T_I
  \end{matrix}
  \right] =
 \left[
 \begin{matrix}
 \mathbf B\Diag{\mathbf a^1}\mathbf C^T\\
 \vdots\\
 \mathbf B\Diag{\mathbf a^I}\mathbf C^T
 \end{matrix}
 \right]= 
 (\mathbf A\odot\mathbf B)\mathbf C^T\in\mathbb F^{IJ\times K},
  \end{equation}
   where $\mathbf T_i:=(t_{ijk})_{j,k=1}^{J,K}$ denotes the $i$th horizontal slice of $\mathcal T=(t_{ijk})_{i,j,k=1}^{I,J,K}$, $\mathbf a^i:= [a_{i1}\ \dots\ a_{i_R}]$ denotes the $i$th row of $\mathbf A\in\mathbb F^{I\times R}$, and $\Diag{\mathbf a^i}$ denotes a square diagonal matrix with  the elements of the vector $\mathbf a^i$ on the main diagonal.

 To simplify the presentation and w.l.o.g. we will assume throughout 
 that the third dimension $K$  coincides with $r_{\mathbf C}$, yielding $r_{\mathbf C}=K\leq R$.
This can  always be achieved in a ``dimensionality reduction'' step (see, for instance, \cite[Subsection 1.4]{LinkGEVD}).
  \section{Previous results, new contribution, and organization of the paper} \label{subsubsection:mainconstruction}
To explain our contribution, we first briefly recall previous results on uniqueness conditions and algebraic algorithms.  (We  refer the readers to \cite{PartI,PartII,LinkGEVD} and  references therein for recent results and a detailed overview of early results.)
\subsection{At least two factor matrices have full column rank}  
 We say that a  matrix has full column rank if its columns are linearly independent, implying that it cannot have more columns than rows.
 The following result is well-known and goes back  to Kronecker and Weierstrass.
\begin{theorem}\cite{Harshman1972,Leurgans1993}\label{theorem:Harshman}
Let $\mathcal T=[\mathbf A,\mathbf B,\mathbf C]_R$ and suppose that the  matrices $\mathbf B$ and $\mathbf C$  have full column rank and that
 any two columns of $\mathbf A$ are linearly independent:
\begin{equation}\label{eq:Harshman_condition}
 r_{\mathbf B}=r_{\mathbf C}=R, \qquad \ k_{\mathbf A}\geq 2.
 \end{equation}
 Then $r_{\mathcal T}=R$, the CPD of $\mathcal T$ is unique and can be found algebraically.
 \end{theorem}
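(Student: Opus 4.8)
The plan is to run the classical matrix-pencil (generalized eigenvalue) argument on the horizontal slices, which at once delivers the rank value, essential uniqueness, and a constructive linear-algebra procedure. By \eqref{eq:(1.7)} the $i$th slice factors as $\mathbf T_i=\mathbf B\,\Diag{\mathbf a^i}\,\mathbf C^T$, so all the structure sits in the diagonal matrices $\Diag{\mathbf a^i}$ sandwiched between the two full-column-rank factors $\mathbf B$ and $\mathbf C$; the entire argument is about exploiting this.

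First I would pin down $r_{\mathcal T}=R$. From \eqref{eq:(1.7)}, $\mathbf R_{1,0}(\mathcal T)=(\mathbf A\odot\mathbf B)\mathbf C^T$. The Khatri--Rao product $\mathbf A\odot\mathbf B$ has full column rank: if $(\mathbf A\odot\mathbf B)\mathbf x=\vzero$ then $\mathbf A\,\Diag{\mathbf x}\,\mathbf B^T=\mzero$, and right-multiplying by a right inverse of the full-row-rank matrix $\mathbf B^T$ gives $\mathbf A\,\Diag{\mathbf x}=\mzero$, whence each column $x_r\mathbf a_r=\vzero$ forces $\mathbf x=\vzero$ since every $\mathbf a_r\neq\vzero$ (a consequence of $k_{\mathbf A}\geq 2$). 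Multiplying on the right by the full-row-rank matrix $\mathbf C^T$ then gives $r_{\mathbf R_{1,0}(\mathcal T)}=R$. As the rank of any matrix unfolding is a lower bound for the tensor rank while the given decomposition supplies the upper bound $R$, we conclude $r_{\mathcal T}=R$.

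For uniqueness and the algorithm I would form two generic slice mixtures $\mathbf M_{\boldsymbol\lambda}=\sum_i\lambda_i\mathbf T_i=\mathbf B\,\Diag{\mathbf A^T\boldsymbol\lambda}\,\mathbf C^T$ and $\mathbf M_{\boldsymbol\mu}=\mathbf B\,\Diag{\mathbf A^T\boldsymbol\mu}\,\mathbf C^T$. For generic $\boldsymbol\mu$ the diagonal $\Diag{\mathbf A^T\boldsymbol\mu}$ is invertible, so $\mathbf M_{\boldsymbol\mu}$ has rank $R$ and, using $\mathbf C^T(\mathbf C^T)^\dagger=\mathbf B^\dagger\mathbf B=\mathbf I_R$, the product collapses to $\mathbf M_{\boldsymbol\lambda}\mathbf M_{\boldsymbol\mu}^\dagger=\mathbf B\,\Diag{\mathbf A^T\boldsymbol\lambda}\Diag{\mathbf A^T\boldsymbol\mu}^{-1}\mathbf B^\dagger$. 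This is an eigenvalue decomposition: restricted to $\textup{range}(\mathbf B)$ the matrix is similar to the diagonal $\Diag{\mathbf A^T\boldsymbol\lambda}\Diag{\mathbf A^T\boldsymbol\mu}^{-1}$, its eigenvectors are the columns $\mathbf b_r$, and its eigenvalues are the ratios $\langle\mathbf a_r,\boldsymbol\lambda\rangle/\langle\mathbf a_r,\boldsymbol\mu\rangle$. Once $\mathbf B$ is read off as these eigenvectors, left-multiplying each slice gives $\mathbf B^\dagger\mathbf T_i=\Diag{\mathbf a^i}\mathbf C^T$, whose $r$th row is $a_{ir}\mathbf c_r^T$; letting $i$ vary recovers each $\mathbf c_r$, hence $\mathbf C$, up to scaling, and then $\mathbf A$.

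The decisive step, and the main obstacle, is showing the eigenvalues are distinct and that this pins the decomposition. For $r\neq s$ the coincidence $\langle\mathbf a_r,\boldsymbol\lambda\rangle\langle\mathbf a_s,\boldsymbol\mu\rangle=\langle\mathbf a_s,\boldsymbol\lambda\rangle\langle\mathbf a_r,\boldsymbol\mu\rangle$ cuts out a proper hypersurface in $(\boldsymbol\lambda,\boldsymbol\mu)$ \emph{precisely because} $\mathbf a_r,\mathbf a_s$ are linearly independent, which is exactly what $k_{\mathbf A}\geq 2$ supplies; hence a generic choice makes all $R$ eigenvalues distinct and nonzero, forcing one-dimensional eigenspaces and recovering $\mathbf B$ up to permutation and scaling. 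To upgrade this to essential uniqueness I would take a second decomposition $[\bar{\mathbf A},\bar{\mathbf B},\bar{\mathbf C}]_R$ and note that $\mathbf M_{\boldsymbol\lambda}\mathbf M_{\boldsymbol\mu}^\dagger$ depends only on $\mathcal T$, so its eigenspaces are intrinsic; the work is to show the barred factors inherit full column rank so the same reduction applies to them. Indeed $\bar{\mathbf C}$ must have full column rank because $(\bar{\mathbf A}\odot\bar{\mathbf B})\bar{\mathbf C}^T$ has rank $R$, and then $\bar{\mathbf B}$ inherits full column rank because right-multiplying it by the full-row-rank matrix $\Diag{\bar{\mathbf A}^T\boldsymbol\mu}\,\bar{\mathbf C}^T$ preserves its rank; the distinctness is automatic, being a property of the fixed matrix $\mathbf M_{\boldsymbol\lambda}\mathbf M_{\boldsymbol\mu}^\dagger$. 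The remaining care is to confirm that one generic pair $(\boldsymbol\lambda,\boldsymbol\mu)$ simultaneously meets the finitely many required genericity conditions, which again reduces to avoiding a finite union of proper hypersurfaces.
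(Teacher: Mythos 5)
Your proof is correct and follows essentially the same route the paper itself sketches for this classical result: reduce to an eigenvalue decomposition of a quotient of (mixtures of) slices $\mathbf B\,\Diag{\cdot}\,\mathbf B^{\dagger}$, read off $\mathbf B$ from the eigenvectors (distinctness of eigenvalues coming from $k_{\mathbf A}\geq 2$), and then peel off $\mathbf C$ and $\mathbf A$ from the slices. The paper only illustrates the $2\times R\times R$ case with square factors and cites \cite{Harshman1972,Leurgans1993} for the general statement; your use of generic slice mixtures and pseudo-inverses is the standard way to carry that sketch to general $I$, $J$, $K$, and your handling of the alternative decomposition (forcing $\bar{\mathbf B}$, $\bar{\mathbf C}$ to have full column rank and invoking intrinsicness of the eigenstructure) correctly closes the uniqueness argument.
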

 Theorem \ref{theorem:Harshman} is the heart of the algebraic algorithms presented in \cite{LinkGEVD} and also in this paper.
To give an idea of how the CPD  in Theorem \ref{theorem:Harshman} is computed, let us consider the particular case of $2\times R\times R$ tensors.
Then, by \eqref{eq:Harshman_condition},   $\mathbf B$ and $\mathbf C$ are $R\times R$ nonsingular matrices. For simplicity we also assume that  the second row of $\mathbf A$ does not contain zero entries.
By \eqref{eq:(1.7)}, PD \eqref{eqintro2} can be rewritten as
\begin{equation}\label{eq:har1}
\mathbf T_1 = \mathbf B\Diag{\mathbf a^1}\mathbf C^T\ \text{ and }\ \mathbf T_2 = \mathbf B\Diag{\mathbf a^2}\mathbf C^T,
\end{equation}
 which easily implies that
 $$
 \mathbf T_1\mathbf T_2^{-1} = \mathbf B\mathbf D\mathbf B^{-1},\quad
 \mathbf T_1^T\mathbf T_2^{-T} = \mathbf C\mathbf D\mathbf C^{-1},
 $$
 where $\mathbf D = \Diag{\mathbf a^1}\Diag{\mathbf a^2}^{-1}$. By the assumption $k_{\mathbf A}\geq 2$,
 the diagonal entries of $\mathbf D$ are distinct. Hence, the matrices $\mathbf B$ and $\mathbf C$ can be uniquely identified
up to permutation and column scaling  from the eigenvalue decomposition of $\mathbf T_1\mathbf T_2^{-1}$ and $\mathbf T_1^T\mathbf T_2^{-T}$, respectively.
 One can then easily recover $\mathbf A$ from \eqref{eq:har1}.  
 Note that, in general, the matrices $\mathbf B$ and $\mathbf C$ in Theorem \ref{theorem:Harshman}  can be obtained from the GEVD of the matrix
  pencil $(\mathbf T_1,\mathbf T_2)$.
\subsection{At least one factor matrix has full column rank}\label{subsection2.2}
In this subsection we assume that only the third factor matrix of $\mathcal T$ has full column rank. 
It was shown in \cite{JiangSid2004} that PD \eqref{eqintro2} is unique if and only if
\begin{equation}\label{eq:U2}
r_{\mathbf A\Diag{\bm{\lambda}}\mathbf B^T}\geq 1 \ \text{for all }\bm{\lambda}=(\lambda_1,\dots,\lambda_R)\ \text{with at least two nonzero entries}.
\end{equation}
Condition \eqref{eq:U2} is not easy to check for a specific tensor.  The following condition is more restrictive but easy to check 
\cite{DeLathauwer2006,LinkGEVD}. We denote by $\mathcal C_m(\mathbf A)\in\mathbb R^{C^m_I\times C^m_R}$ 
the $m$th compound matrix of $\mathbf A\in\mathbb F^{I\times R}$, i.e.
the matrix containing the determinants of all $m\times m$ submatrices of $\mathbf A$ arranged with
the submatrix index sets in lexicographic order.
\begin{theorem}\cite{DeLathauwer2006,LinkGEVD}\label{th:C2}
Let $\mathcal T=[\mathbf A,\mathbf B,\mathbf C]_R$ and suppose that
\begin{equation}\label{eq:Lievenfcr}
 \text{the  matrices }\mathcal C_2(\mathbf A)\odot\mathcal C_2(\mathbf B)\ \text{ and } \mathbf C\ \text{  have full column rank.}
\end{equation}
 Then $r_{\mathcal T}=R$ and the CPD of $\mathcal T$ is unique.
\end{theorem}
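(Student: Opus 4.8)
The plan is to reduce Theorem \ref{th:C2} to the Jiang--Sidiropoulos characterization \eqref{eq:U2}, by showing that the full column rank of $\mathcal C_2(\mathbf A)\odot\mathcal C_2(\mathbf B)$ is exactly a lower bound on the rank of every pencil combination $\mathbf A\Diag{\bm\lambda}\mathbf B^T$. The one tool that makes this work is the Cauchy--Binet identity for compound matrices, $\mathcal C_2(\mathbf X\mathbf Y)=\mathcal C_2(\mathbf X)\mathcal C_2(\mathbf Y)$. First I would set $\mathbf N:=\mathbf A\Diag{\bm\lambda}\mathbf B^T$ and apply Cauchy--Binet twice, together with $\mathcal C_2(\mathbf B^T)=\mathcal C_2(\mathbf B)^T$, to obtain
\begin{equation*}
\mathcal C_2(\mathbf N)=\mathcal C_2(\mathbf A)\,\mathcal C_2(\Diag{\bm\lambda})\,\mathcal C_2(\mathbf B)^T .
\end{equation*}
Because $\Diag{\bm\lambda}$ is diagonal, each of its $2\times 2$ minors vanishes unless it lies on the diagonal, so $\mathcal C_2(\Diag{\bm\lambda})=\Diag{\bm\mu}$ with $\bm\mu=(\lambda_i\lambda_j)_{1\leq i<j\leq R}$ listed in lexicographic order.

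Next I would vectorize. Writing $\mathcal C_2(\mathbf N)=\sum_{i<j}\lambda_i\lambda_j\,\mathbf u_{ij}\mathbf v_{ij}^T$, where $\mathbf u_{ij}$ and $\mathbf v_{ij}$ are the columns of $\mathcal C_2(\mathbf A)$ and $\mathcal C_2(\mathbf B)$ indexed by $\{i,j\}$, the identity $\textup{vec}(\mathbf u\mathbf v^T)=\mathbf v\otimes\mathbf u$ gives $\textup{vec}(\mathcal C_2(\mathbf N))=(\mathcal C_2(\mathbf B)\odot\mathcal C_2(\mathbf A))\bm\mu$. Since $\mathcal C_2(\mathbf B)\odot\mathcal C_2(\mathbf A)$ and $\mathcal C_2(\mathbf A)\odot\mathcal C_2(\mathbf B)$ differ only by the fixed row permutation swapping the two Kronecker factors, they share the same column rank, so \eqref{eq:Lievenfcr} says this matrix has trivial null space. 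Hence $\mathcal C_2(\mathbf N)=\mzero$ if and only if $\bm\mu=\vzero$, and $\bm\mu=\vzero$ holds precisely when at most one $\lambda_i$ is nonzero. Taking contrapositives, whenever $\bm\lambda$ has at least two nonzero entries we get $\mathcal C_2(\mathbf N)\neq\mzero$, i.e. $r_{\mathbf N}\geq 2$; this is exactly condition \eqref{eq:U2}, so $\mathbf C$ of full column rank together with \eqref{eq:U2} yields uniqueness of the CPD by \cite{JiangSid2004}.

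It remains to certify $r_{\mathcal T}=R$. For this I would first deduce from the rank bound just proved that $\mathbf A\odot\mathbf B$ has full column rank: if $(\mathbf A\odot\mathbf B)\bm\lambda=\vzero$ then $\mathbf N=\mzero$, forcing $\bm\lambda$ to have at most one nonzero entry, and a single nonzero $\lambda_r$ would give $\lambda_r\mathbf a_r\mathbf b_r^T=\mzero$, contradicting that $\mathbf a_r,\mathbf b_r$ are nonzero factor vectors. Then $\mathbf R_{1,0}(\mathcal T)=(\mathbf A\odot\mathbf B)\mathbf C^T$ is the product of a matrix of full column rank $R$ and a matrix of full row rank $R$, hence has rank $R$; since the tensor rank dominates the rank of any unfolding, $r_{\mathcal T}\geq R$, and the displayed length-$R$ decomposition \eqref{eqintro2} gives $r_{\mathcal T}\leq R$, so equality holds.

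The main obstacle I anticipate is bookkeeping rather than conceptual: making the Cauchy--Binet identity land with the correct index ordering, so that $\mathcal C_2(\Diag{\bm\lambda})$ is genuinely $\Diag{(\lambda_i\lambda_j)_{i<j}}$ and the vectorization reproduces the columns of $\mathcal C_2(\mathbf A)\odot\mathcal C_2(\mathbf B)$ in lexicographic order. Once the equivalence ``$r_{\mathbf N}\geq 2$ for every $\bm\lambda$ with at least two nonzero entries $\Longleftrightarrow$ $\mathcal C_2(\mathbf A)\odot\mathcal C_2(\mathbf B)$ has full column rank'' is established, the theorem follows immediately from the quoted characterization \eqref{eq:U2}.
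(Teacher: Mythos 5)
Your proposal is correct, and it is worth noting that the paper does not actually prove Theorem \ref{th:C2}: the result is quoted from \cite{DeLathauwer2006,LinkGEVD} and is only recovered later as the special case $l=0$, $m=2$, $K=R$ of Theorem \ref{th:uniquenessandalgorithmrefol}, whose proof runs through the $\text{\textup{(W{\scriptsize m})}}$ machinery of Lemma \ref{some:lemma} and the uniqueness propositions imported from \cite{PartI,PartII}. Your route short-circuits that framework. The Cauchy--Binet factorization $\mathcal C_2(\mathbf A\Diag{\bm\lambda}\mathbf B^T)=\mathcal C_2(\mathbf A)\,\mathcal C_2(\Diag{\bm\lambda})\,\mathcal C_2(\mathbf B)^T$ together with the vectorization step is exactly the $k=2$, $l=0$ content of Lemma \ref{some:lemma} (there phrased via $\mathbf\Phi_{2,0}(\mathbf A,\mathbf B)$ and the identity \eqref{eq:mainidentity}), but you feed the conclusion directly into the Jiang--Sidiropoulos characterization rather than into condition $\text{\textup{(W{\scriptsize 2})}}$ and Proposition \ref{prmostgeneraldis2}. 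What your approach buys is a short, self-contained proof of this particular theorem; what the paper's approach buys is a skeleton that survives when $l\geq 1$ and $K<R$, where no compound-matrix shortcut of this kind is available. Your separate verification that $\mathbf A\odot\mathbf B$ has full column rank, hence $r_{\mathcal T}=R$, is also needed and is handled correctly.

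Two small points. First, \eqref{eq:U2} as printed requires only $r_{\mathbf A\Diag{\bm\lambda}\mathbf B^T}\geq 1$; the actual Jiang--Sidiropoulos condition, and what your computation delivers, is $r_{\mathbf A\Diag{\bm\lambda}\mathbf B^T}\geq 2$ for every $\bm\lambda$ with at least two nonzero entries. The $\geq 1$ version is not sufficient (take $\mathbf A$ with two equal nonzero columns and $\mathbf B=\mathbf I_2$), so you should invoke the $\geq 2$ form of \cite{JiangSid2004}; your argument already proves it. Second, your appeal to ``nonzero factor vectors'' when excluding $\omega(\bm\lambda)=1$ is harmless under the paper's convention that a PD is a sum of genuine rank-$1$ terms, but for $R\geq 2$ it also follows from the hypothesis itself: if some $\mathbf a_r=\vzero$, then every column of $\mathcal C_2(\mathbf A)$ indexed by a pair containing $r$ vanishes, contradicting \eqref{eq:Lievenfcr}.
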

It was shown in \cite{DeLathauwer2006,LinkGEVD} that the assumptions in Theorem \ref{th:C2} also imply an algebraic algorithm.
The algorithm is based on the following relation between $\mathcal T$ and its factor matrices:
\begin{equation}\label{eq:identityLieven}
\widetilde{\mathbf R}_{2,0}(\mathcal T)=(\mathcal C_2(\mathbf A)\odot\mathcal C_2(\mathbf B))\mathbf S_{2,0}(\mathbf C)^T,
\end{equation}
in which $\widetilde{\mathbf R}_{2,0}(\mathcal T)$ denotes an $C^2_IC^2_J\times R^2$ matrix whose
\begin{gather*}
 \left((j_1(2j_2-j_1-1)-2)I(I-1)/4+i_1(2i_2-i_1-1)/2, (r_2-1)R+r_1\right)\text{-th}\\
 (1\leq i_1<i_2\leq I,\ 1\leq j_1<j_2\leq J,\ 1\leq r_1,r_2\leq R)
\end{gather*}
entry is equal to 
$
t_{i_1j_1r_1}t_{i_2j_2r_2}+t_{i_1j_1r_2}t_{i_2j_2r_1}-t_{i_1j_2r_1}t_{i_2j_1r_2}-t_{i_1j_2r_2}t_{i_2j_1r_1}
$
and $\mathbf S_{2,0}(\mathbf C)$ denotes an $R^2\times C^2_R$ matrix that has columns 
$
\frac{1}{2}(\mathbf c_{r_1}\otimes\mathbf c_{r_2}+\mathbf c_{r_2}\otimes\mathbf c_{r_1})$, $1\leq r_1<r_2\leq R$.
Computationally, the identity \eqref{eq:identityLieven} is used as follows. First, the subspace $\ker(\widetilde{\mathbf R}_{2,0}(\mathcal T))$ is used to construct an auxiliary $R\times R\times R$ tensor $\mathcal W$ that has CPD $\mathcal W =[\mathbf C^{-T},\mathbf C^{-T},\mathbf M]_R$ in which both $\mathbf C^{-T}$ and $\mathbf M$ have full column rank. The CPD of $\mathcal W$ is computed as in Theorem \ref{theorem:Harshman},
which gives the matrix $\mathbf C^{-T}$.  The third factor  matrix of $\mathcal T$, $\mathbf C$, is obtained from  $\mathbf C^{-T}$ and
 the first two factor matrices $\mathbf A$ and $\mathbf B$ can  be easily found
  from $\mathbf R_{1,0}(\mathcal T)\mathbf C^{-T}=\mathbf A\odot\mathbf B$ (see \eqref{eq:(1.7)}) using the fact that the columns
 of $\mathbf A\odot\mathbf B$ are vectorized  rank-$1$ matrices.
 
\subsection{None of the factor matrices is required to have full column rank}\label{subsection2.3}
The following result is known as Kruskal's theorem. It is the most well-known result on uniqueness of the CPD.
\begin{theorem}\label{th:Kruskal}\cite{Kruskal1977}
Let $\mathcal T=[\mathbf A,\mathbf B,\mathbf C]_R$ and suppose that
\begin{equation}
2R+2\leq k_{\mathbf A}+k_{\mathbf B}+k_{\mathbf C}.
\label{eq:Kruskal}
\end{equation}
Then $r_{\mathcal T}=R$ and the CPD of $\mathcal T$ is unique.
\end{theorem}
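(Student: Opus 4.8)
The plan is to prove both assertions by comparing the given CPD with an arbitrary second length-$R$ decomposition $\mathcal T=[\bar{\mathbf A},\bar{\mathbf B},\bar{\mathbf C}]_R$ and showing that the two must agree up to a common column permutation and scaling; the minimality $r_{\mathcal T}=R$ then follows from the same circle of ideas by a short padding argument (a decomposition into fewer than $R$ terms, completed with zero terms to length $R$, would by uniqueness have to coincide termwise with $[\mathbf A,\mathbf B,\mathbf C]_R$, forcing a genuinely nonzero rank-$1$ term to equal $\vzero$). First I would record that \eqref{eq:Kruskal} already forces $k_{\mathbf A},k_{\mathbf B},k_{\mathbf C}\ge 2$, since e.g. $k_{\mathbf A}\ge 2R+2-k_{\mathbf B}-k_{\mathbf C}\ge 2$ because $k_{\mathbf B},k_{\mathbf C}\le R$. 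The device that lets the two decompositions interact is the pencil of slice mixtures: for $\mathbf x\in\mathbb F^{I}$ put $\mathbf M(\mathbf x):=\sum_i x_i\mathbf T_i$, which by \eqref{eq:(1.7)} equals both $\mathbf B\Diag{\mathbf x^T\mathbf A}\mathbf C^T$ and $\bar{\mathbf B}\Diag{\mathbf x^T\bar{\mathbf A}}\bar{\mathbf C}^T$. Hence the rank $r_{\mathbf M(\mathbf x)}$ is an invariant that can be bounded from either side.

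The engine of the proof is Kruskal's permutation lemma, which I would invoke as a black box. Writing $\omega(\mathbf v)$ for the number of nonzero entries of $\mathbf v$, it states (in the form I would use) that if $k_{\mathbf A}\ge 1$ and every $\mathbf x$ with $\omega(\mathbf x^T\bar{\mathbf A})\le R-k_{\mathbf A}+1$ also satisfies $\omega(\mathbf x^T\mathbf A)\le\omega(\mathbf x^T\bar{\mathbf A})$, then $\bar{\mathbf A}=\mathbf A\mathbf{\Pi}\mathbf{\Lambda}$ for a permutation $\mathbf{\Pi}$ and a nonsingular diagonal $\mathbf{\Lambda}$ (and symmetrically in the other two modes); its own proof is a self-contained induction on zero patterns that I would isolate. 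The task thus reduces to verifying the support hypothesis. For the upper estimate, $\mathbf M(\mathbf x)$ is a sum of $\omega(\mathbf x^T\bar{\mathbf A})$ rank-$1$ matrices, so $r_{\mathbf M(\mathbf x)}\le\omega(\mathbf x^T\bar{\mathbf A})$. For a lower estimate, restricting $\mathbf B,\mathbf C$ to the support $S$ of $\mathbf x^T\mathbf A$ (of size $w:=\omega(\mathbf x^T\mathbf A)$) and applying Sylvester's rank inequality to $\mathbf B_S\Diag{\cdot}\mathbf C_S^T$ together with the definition of $k$-rank gives $r_{\mathbf M(\mathbf x)}\ge\min(w,k_{\mathbf B})+\min(w,k_{\mathbf C})-w$.

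Comparing the two estimates yields $\omega(\mathbf x^T\bar{\mathbf A})\ge\min(w,k_{\mathbf B})+\min(w,k_{\mathbf C})-w$, and the role of \eqref{eq:Kruskal} becomes visible in the extreme regime: if $w$ exceeded both $k_{\mathbf B}$ and $k_{\mathbf C}$, the bound would read $\omega(\mathbf x^T\bar{\mathbf A})\ge k_{\mathbf B}+k_{\mathbf C}-w$, and combined with the guard $\omega(\mathbf x^T\bar{\mathbf A})\le R-k_{\mathbf A}+1$ this would force $w\ge k_{\mathbf A}+k_{\mathbf B}+k_{\mathbf C}-R-1\ge R+1$, impossible since $w\le R$. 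Thus the condition rules out the large-support case, and when $w\le\min(k_{\mathbf B},k_{\mathbf C})$ the lower estimate collapses to $w$, giving $\omega(\mathbf x^T\mathbf A)=w\le\omega(\mathbf x^T\bar{\mathbf A})$ outright. The main obstacle is the intermediate regime $\min(k_{\mathbf B},k_{\mathbf C})<w\le\max(k_{\mathbf B},k_{\mathbf C})$, where the crude single-pencil bound is provably tight (one can realize $r_{\mathbf M(\mathbf x)}=\min(k_{\mathbf B},k_{\mathbf C})<w$) and hence insufficient. Closing this gap is precisely the delicate combinatorial heart of Kruskal's argument: one must count zero patterns over a whole family of vectors rather than a single $\mathbf x$, balancing the three $k$-ranks simultaneously, which is why only the symmetric sum $k_{\mathbf A}+k_{\mathbf B}+k_{\mathbf C}$, and not the individual ranks, governs the result. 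I expect this refined count, together with the internal proof of the permutation lemma, to be where essentially all the work lies.

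With the support hypothesis established in each mode, the permutation lemma produces $\bar{\mathbf A}=\mathbf A\mathbf{\Pi}_{\mathbf A}\mathbf{\Lambda}_{\mathbf A}$, $\bar{\mathbf B}=\mathbf B\mathbf{\Pi}_{\mathbf B}\mathbf{\Lambda}_{\mathbf B}$, and $\bar{\mathbf C}=\mathbf C\mathbf{\Pi}_{\mathbf C}\mathbf{\Lambda}_{\mathbf C}$. A final consistency step shows the three permutations coincide and the scalings cancel on each term: after relabelling by $\mathbf{\Pi}_{\mathbf A}$, substituting back into \eqref{eq:(1.7)} forces the matched rank-$1$ terms $\mathbf a_r\outerprod\mathbf b_r\outerprod\mathbf c_r$ and $\bar{\mathbf a}_r\outerprod\bar{\mathbf b}_r\outerprod\bar{\mathbf c}_r$ to be equal, pinning down $\mathbf{\Pi}_{\mathbf B}=\mathbf{\Pi}_{\mathbf C}=\mathbf{\Pi}_{\mathbf A}$ and $\mathbf{\Lambda}_{\mathbf A}\mathbf{\Lambda}_{\mathbf B}\mathbf{\Lambda}_{\mathbf C}=\mathbf I$ columnwise. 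This yields essential uniqueness, and the padding argument above upgrades it to $r_{\mathcal T}=R$.
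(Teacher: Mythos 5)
First, a remark on scope: the paper does not prove Theorem~\ref{th:Kruskal} at all --- it is quoted from \cite{Kruskal1977} --- so there is no internal proof to compare against and your proposal has to stand on its own. Its skeleton (the permutation lemma taken as a black box, plus the two-sided estimate $\omega(\mathbf x^T\bar{\mathbf A})\ge r_{\mathbf M(\mathbf x)}\ge \min(w,k_{\mathbf B})+\min(w,k_{\mathbf C})-w$ on slice mixtures) is the standard modern route to Kruskal's theorem, and the parts you actually carry out are correct.

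The concrete problem is that you declare the intermediate regime $\min(k_{\mathbf B},k_{\mathbf C})<w\le\max(k_{\mathbf B},k_{\mathbf C})$ to be an unresolved ``combinatorial heart'' requiring a count over whole families of vectors, and you leave it open; as written the proof is therefore incomplete. But the gap you point at is not actually there. Say $k_{\mathbf B}\le k_{\mathbf C}$ and $k_{\mathbf B}<w\le k_{\mathbf C}$; then your lower estimate reads $\omega(\mathbf x^T\bar{\mathbf A})\ge k_{\mathbf B}+w-w=k_{\mathbf B}$, while \eqref{eq:Kruskal} together with $k_{\mathbf C}\le R$ gives $k_{\mathbf A}+k_{\mathbf B}\ge 2R+2-k_{\mathbf C}\ge R+2$, hence $\omega(\mathbf x^T\bar{\mathbf A})\ge R+2-k_{\mathbf A}>R-k_{\mathbf A}+1$. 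Such an $\mathbf x$ violates the guard and is simply not among the vectors the permutation lemma asks you to check --- exactly the same mechanism you used to dispose of the regime $w>\max(k_{\mathbf B},k_{\mathbf C})$. So the single-pencil bound plus the guard verifies the hypothesis in every case; the tightness of the rank bound is irrelevant because you never need $\omega(\mathbf x^T\bar{\mathbf A})\ge w$ there, only a contradiction with the guard. The genuinely hard combinatorics of Kruskal's argument lives entirely inside the proof of the permutation lemma you black-box. Two smaller points. The padding argument for $r_{\mathcal T}=R$ is circular: the padded length-$R$ decomposition has zero columns, so the no-zero-columns hypothesis needed to run the permutation-lemma machinery fails for it (and indeed its conclusion $\bar{\mathbf A}=\mathbf A\mathbf{\Pi}\mathbf{\Lambda}$ with $\mathbf\Lambda$ nonsingular is impossible when $\bar{\mathbf A}$ has a zero column and $k_{\mathbf A}\ge 2$); in the standard treatment $r_{\mathcal T}=R$ is proved first, by a separate argument, and is then what guarantees $k_{\bar{\mathbf A}},k_{\bar{\mathbf B}},k_{\bar{\mathbf C}}\ge 1$ for any alternative length-$R$ decomposition. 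Finally, the consistency step matching the three permutations needs $\mathbf A\odot\mathbf B$ (or a sibling) to have full column rank; this does follow from \eqref{eq:Kruskal} via $k_{\mathbf A\odot\mathbf B}\ge\min(k_{\mathbf A}+k_{\mathbf B}-1,R)=R$, but it should be stated.
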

 In \cite{PartI,PartII} we presented several generalizations of uniqueness Theorems \ref{th:C2} and \ref{th:Kruskal}.  In \cite{LinkGEVD}
we showed that the CPD can be computed algebraically under a much weaker assumption than  \eqref{eq:Kruskal}.
\begin{theorem}\label{th:LinkGEVD}\cite[Theorem 1.7]{LinkGEVD}
Let $\mathcal T=[\mathbf A,\mathbf B,\mathbf C]_R$ and suppose that
\begin{equation}
\mathcal C_m(\mathbf A)\odot\mathcal C_m(\mathbf B)\ \text{has full column rank for}\ m=R-k_{\mathbf C}+2.
\label{eq:Cncondition}
\end{equation}
Then $r_{\mathcal T}=R$, the CPD of $\mathcal T$ is unique and can be computed algebraically.
\end{theorem}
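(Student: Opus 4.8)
The plan is to run the strategy behind Theorem~\ref{th:C2} at level $m=R-k_{\mathbf C}+2$ rather than at level $m=2$. First I would prove the degree-$m$ counterpart of \eqref{eq:identityLieven},
$$\widetilde{\mathbf R}_{m,0}(\mathcal T)=(\mathcal C_m(\mathbf A)\odot\mathcal C_m(\mathbf B))\,\mathbf S_{m,0}(\mathbf C)^T,$$
in which $\widetilde{\mathbf R}_{m,0}(\mathcal T)\in\mathbb F^{C_I^mC_J^m\times K^m}$ has rows indexed by pairs of $m$-subsets $\{i_1<\dots<i_m\}$, $\{j_1<\dots<j_m\}$ and columns by ordered $m$-tuples $(k_1,\dots,k_m)$, its entries being the $m$-th order ``double minors'' of $\mathcal T$ in the first two modes, symmetrized in the third (for $m=2$ these are exactly the four-term expressions displayed after \eqref{eq:identityLieven}), and $\mathbf S_{m,0}(\mathbf C)\in\mathbb F^{K^m\times C_R^m}$ has columns $\operatorname{sym}(\mathbf c_{r_1}\otimes\dots\otimes\mathbf c_{r_m})$, $1\le r_1<\dots<r_m\le R$. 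Establishing this identity is a multilinear bookkeeping exercise: substituting the slice factorization \eqref{eq:(1.7)} and expanding each double minor by the Cauchy--Binet formula splits it as a sum over $m$-subsets of columns, each summand factoring into the matching minor of $\mathbf A\odot\mathbf B$, an entry of $\mathcal C_m(\mathbf A)\odot\mathcal C_m(\mathbf B)$, times the corresponding symmetric product of columns of $\mathbf C$. The case $m=2$, \eqref{eq:identityLieven}, is the template; the general case differs only in the index gymnastics.

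With the identity in hand I would invoke the hypothesis. Because $\mathcal C_m(\mathbf A)\odot\mathcal C_m(\mathbf B)$ has full column rank, the factorization forces $\ker(\widetilde{\mathbf R}_{m,0}(\mathcal T))=\ker(\mathbf S_{m,0}(\mathbf C)^T)$, equivalently $\textup{range}(\widetilde{\mathbf R}_{m,0}(\mathcal T)^T)=\textup{range}(\mathbf S_{m,0}(\mathbf C))=:U$. Thus the span $U$ of the symmetric $m$-fold products of the columns of $\mathbf C$ is recovered from $\mathcal T$ alone by a single null-space computation, with no access to $\mathbf A$ or $\mathbf B$.

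The crux, and the step I expect to be the main obstacle, is to recover $\mathbf C$ up to column permutation and scaling from $U$; it is here that the value $m=R-k_{\mathbf C}+2$ is forced. The goal is to turn $U$ into an auxiliary $R\times R\times N$ tensor $\mathcal W$ whose first two factor matrices have full column rank and whose third factor has $k$-rank at least $2$, so that Theorem~\ref{theorem:Harshman} returns $\mathbf C$ through a single GEVD. For $m=2$, where $\mathbf C$ is square and invertible, the recipe is transparent: the symmetric matrices $S$ annihilating all off-diagonal products, $\mathbf c_{r_1}^TS\mathbf c_{r_2}=0$ for $r_1\ne r_2$, are exactly $S=\mathbf C^{-T}\Diag{\mathbf d}\mathbf C^{-1}$, and stacking a basis of these into the slices of $\mathcal W$ gives $\mathcal W=[\mathbf C^{-T},\mathbf C^{-T},\mathbf M]_R$ as in Theorem~\ref{th:C2}. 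For $m>2$ no such two-sided inverse exists, and I would instead, for instance, lower the degree by contracting the symmetric tensors spanning $U$ against generic vectors, bringing the problem back to a family of symmetric matrices (a pencil) to which the GEVD step applies. The identity $R-m+1=k_{\mathbf C}-1$ is what makes the construction nondegenerate: it ensures that every group of $k_{\mathbf C}-1$ columns of $\mathbf C$ entering the build is linearly independent, which is precisely the rank condition needed for the factor matrices of $\mathcal W$ to satisfy the hypotheses of Theorem~\ref{theorem:Harshman}; a smaller $m$ would collapse those ranks, while a larger one would only tighten the hypothesis unnecessarily. Showing that the assembled $\mathcal W$ genuinely has the claimed CPD, with factors of the stated rank, is the delicate heart of the proof.

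Finally, once $\mathbf C$ is known up to permutation and scaling, $\mathbf A$ and $\mathbf B$ follow from the linear relation \eqref{eq:(1.7)}, $\mathbf R_{1,0}(\mathcal T)=(\mathbf A\odot\mathbf B)\mathbf C^T$: one solves it for the Khatri--Rao product $\mathbf A\odot\mathbf B$ and uses that each of its columns is a vectorized rank-$1$ matrix $\mathbf a_r\mathbf b_r^T$, from which $\mathbf a_r$ and $\mathbf b_r$ are read off by a best rank-$1$ (SVD) truncation. When $\mathbf C$ is not of full column rank this linear system carries an $(R-K)$-dimensional ambiguity, which is resolved by imposing the rank-$1$ structure of the columns. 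Since every step reconstructs the rank-$1$ terms from $\mathcal T$ alone, any two length-$R$ polyadic decompositions of $\mathcal T$ are fed to the procedure identically and therefore coincide up to permutation and scaling; combined with the fact that the full-column-rank hypothesis precludes any shorter decomposition, this delivers simultaneously $r_{\mathcal T}=R$, the uniqueness of the CPD, and its computation by the stated null-space-plus-GEVD algorithm.
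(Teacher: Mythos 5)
First, a point of reference: the paper does not prove this theorem --- it is quoted from \cite{LinkGEVD} --- and the only in-text justification is the identity \eqref{eq:LinkGEVDidentity} plus the remark that \eqref{eq:Cncondition} lets one recover $\mathbf C$ from $\ker(\widetilde{\mathbf R}_{m,0}(\mathcal T))$. The substance of the argument is, however, visible in the paper's own generalization (Theorem \ref{th:uniquenessandalgorithmrefol} with $l=0$, Algorithm \ref{alg:onefactormatrix}, Lemmas \ref{lemma:suppl} and \ref{lemma4.5}). Measured against that, your skeleton is the right one: the degree-$m$ identity, the observation that full column rank of $\mathcal C_m(\mathbf A)\odot\mathcal C_m(\mathbf B)$ forces $\ker(\widetilde{\mathbf R}_{m,0}(\mathcal T))=\ker(\mathbf S_{m,0}(\mathbf C)^T)$, the reshaping into an auxiliary tensor handled by Theorem \ref{theorem:Harshman}, and the recovery of $\mathbf A,\mathbf B$ from $\mathbf R_{1,0}(\mathcal T)$ all match.

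The genuine gap is exactly the step you flag as ``the delicate heart'' and then do not carry out: how to extract $\mathbf C$ from the symmetric part of $\ker(\mathbf S_{m,0}(\mathbf C)^T)$ when $m>2$. Your proposed substitute --- contracting the symmetric tensors in $U$ against generic vectors to reduce to a matrix pencil --- is not the construction that works, and your heuristic for why $m=R-k_{\mathbf C}+2$ is forced (``every group of $k_{\mathbf C}-1$ columns of $\mathbf C$ is independent'') misses the actual mechanism. What is really needed is the matrix $\mathbf F=\mathcal B(\mathbf C)$ of \eqref{P1}--\eqref{P2}: a vector $\mathbf f$ orthogonal to exactly $k_{\mathbf C}-1$ columns of $\mathbf C$ has $\mathbf f^T\mathbf C$ with exactly $m-1$ nonzero entries, so every product $(\mathbf f^T\mathbf c_{r_1})\cdots(\mathbf f^T\mathbf c_{r_m})$ over $m$ \emph{distinct} indices vanishes --- this is where the value of $m$ enters --- and hence $\mathbf f\otimes\cdots\otimes\mathbf f\in\ker(\mathbf S_{m,0}(\mathbf C)^T)$. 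One must then prove the reverse inclusion and the dimension count $C^{k_{\mathbf C}-1}_R$ (the analogue of Lemma \ref{lemma:suppl}, whose proof is a nontrivial induction occupying \ref{App1}), so that the symmetric part of the kernel is precisely $\textup{range}(\mathbf F\odot\cdots\odot\mathbf F)$ and the reshaped tensor has CPD $[\mathbf F,\mathbf F^{(m-1)},\mathbf M]$ with $k_{\mathbf F}\geq 2$ and $\mathbf F^{(m-1)}$ of full column rank; only then does GEVD apply, and $\mathbf C$ is recovered from $\mathbf F$ via the duality \eqref{P3}--\eqref{P4} rather than by inverting anything. Separately, your closing argument for $r_{\mathcal T}=R$ and uniqueness (``the procedure is deterministic, hence any two decompositions coincide'') is not a proof: one must show that \emph{every} length-$R$ decomposition satisfies the same structural relations and is therefore returned by the procedure, which in the paper's framework is done through condition \condWm and Propositions \ref{prmostgeneraldis}--\ref{prmostgeneraldis2}, not by appeal to determinism.
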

 The algorithm in \cite{LinkGEVD}
is based on the following extension of \eqref{eq:identityLieven}:
\begin{equation}\label{eq:LinkGEVDidentity}
\widetilde{\mathbf R}_{m,0}(\mathcal T)=(\mathcal C_m(\mathbf A)\odot\mathcal C_m(\mathbf B))\mathbf S_{m,0}(\mathbf C)^T,
\end{equation}
where the $C^m_IC^m_J\times K^m$ matrix $\widetilde{\mathbf R}_{m,0}(\mathcal T)$ is constructed from the given tensor
$\mathcal T$ and the $C^m_R\times K^m$ matrix $\mathbf S_{m,0}(\mathbf C)$ depends in a certain way on  $\mathbf C$. 
We refer the reader to \cite{LinkGEVD} for details on the algorithm. Here we just mention that 
assumption \eqref{eq:Cncondition} guarantees that the matrix $\mathbf C$ can be recovered from the subspace $\ker(\widetilde{\mathbf R}_{m,0}(\mathcal T))$.

\subsection{Generic uniqueness results from algebraic geometry}\label{subsection:2.4}
So far we have discussed deterministic conditions, which are expressed in terms of particular $\mathbf A,\mathbf B, \mathbf C$. On the other hand,
generic conditions are expressed in terms of dimensions and rank and hold ``with probability one''. Formally, 
we say that the CPD of a generic $I\times J\times K$ tensor of rank $R$ is unique 
if
$$
\mu\{(\mathbf A,\mathbf B,\mathbf C): \text{the CPD of the tensor}\ \mathcal T=[\mathbf A,\mathbf B,\mathbf C]_R\ \text{is not  unique}\}=0,
$$
where $\mu$ denotes the Lebesgue measure on $\mathbb F^{(I+J+K)R}$.

It is known from algebraic geometry  that if $2\leq I\leq J\leq K\leq R$, then each of the following
conditions implies that the CPD of a generic $I\times J\times K$ tensor of rank $R$ is unique:
\begin{align}
R&\leq\frac{I+J+2K-2-\sqrt{(I-J)^2+4K}}{2} & &(\text{see  \cite[Proposition 1.6]{AlgGeom1}}),  \label{eq:genericbound1}\\
R&\leq \frac{IJK}{I+J+K-2}-K,\ 3\leq I,\ \mathbb F=\mathbb C & & (\text{see \cite[Corollary 6.2]{Bocci2013}}), \label{eq:genericbound2}\\
R&\leq 2^{\alpha+\beta-2} \leq \frac{IJ}{4}& &(\text{see  \cite[Theorem 1.1]{ChiantiniandOttaviani}}),   \label{eq:genericbound3}
\end{align} 
where $\alpha$ and $\beta$ are maximal integers such that $2^\alpha\leq I$  and  $2^\beta\leq J$.
Bounds \eqref{eq:genericbound1}--\eqref{eq:genericbound3} complement each other. If $R=K$, then bound \eqref{eq:genericbound1} is equivalent to
\begin{equation}
R\leq(I-1)(J-1).\label{eq:U2generic}
\end{equation}
If $\mathbb F=\mathbb C$, then \eqref{eq:U2generic} is not only sufficient but also necessary, i.e., the decomposition is generically not unique  for $R >(I-1)(J-1)$ \cite[Proposition 2.2]{ChiantiniandOttaviani}. 

\subsection{Generic versions of deterministic uniqueness conditions}
Theorems \ref{th:C2}--\ref{th:LinkGEVD}, taken from \cite{DeLathauwer2006,LinkGEVD}, give deterministic  conditions under which
the CPD is unique and can be computed algebraically.  Generic counterparts of condition
\eqref{eq:Lievenfcr} and Kruskal's bound \eqref{eq:Kruskal}, for the case where\\ $\max(I,J,K)\leq R$,  are given by
\begin{align}
C^2_R &\leq C^2_I C^2_J\ \text{ and } R\leq K\qquad & &(\text{see \cite{DeLathauwer2006}})\text{ and}\label{eq:gen_counter1}\\
2R+2  &\leq I + J +K\qquad\qquad\qquad \qquad       & &(\text{trivial}),\label{eq:gen_counter2}
\end{align}
respectively.
We are not aware of a generic counterpart of condition \eqref{eq:Cncondition}, but, obviously,
\eqref{eq:Cncondition}  may hold only if the number of columns of
the matrix $\mathcal C_m(\mathbf A)\odot\mathcal C_m(\mathbf B)$ does not exceed the number of rows, i.e., if
\begin{equation}\label{eq:15}
C^m_R \leq C^m_I C^m_J, \text{ where } m=R-K+2.
\end{equation}
It can be verified that the algebraic geometry based bound \eqref{eq:genericbound1} significantly improves bounds \eqref{eq:gen_counter1}--\eqref{eq:15} if $\min(I,J)\geq 3$.
For instance, if $R=K$, then bound \eqref{eq:genericbound1} is equivalent to \eqref{eq:U2generic}, as has been mentioned earlier,
while \eqref{eq:gen_counter1} and \eqref{eq:gen_counter2} reduce to $R\leq(J-\frac{1}{2})(I-\frac{1}{2})/\sqrt{2}+1$ and $R\leq I+J-1$, respectively.
\subsection{Our contribution and organization of the paper}
In this paper we further extend results from \cite{DeLathauwer2006,PartI,PartII, LinkGEVD}, narrowing the gap with what is known from algebraic geometry. Namely, we present new deterministic conditions that
guarantee that the CPD is unique and can be computed algebraically. 
Although we do not formally prove that generically the  condition coincides  with \eqref{eq:genericbound1}, in our simulations we have been able to find the factor matrices  by algebraic means up to the latter bound (Examples  \ref{Example2.5} and
\ref{Example2.12}). Moreover, the algebraic scheme is shown to outperform numerical optimization
(Example \ref{example:manyinits}).

Key to our derivation is the following generalization of \eqref{eq:(1.7)}, \eqref{eq:identityLieven}, and \eqref{eq:LinkGEVDidentity}:
\begin{equation}
\mathbf R_{m,l}(\mathcal T) := {\mathbf\Phi}_{m,l}(\mathbf A,\mathbf B)\mathbf S_{m+l}(\mathbf C)^T,\ \ m\geq 1,\ \ l\geq 0,
\label{eq:mainidentity}
\end{equation}
in which  the matrices $\mathbf R_{m,l}(\mathcal T)$, $\mathbf{\Phi}_{m,l}(\mathbf A,\mathbf B)$, and
$\mathbf S_{m+l}(\mathbf C)$  are constructed from  the tensor $\mathcal T$, the matrices $\mathbf A$ and $\mathbf B$, and the matrix $\mathbf C$, respectively. 
The precise definitions of these matrices  are deferred to Section \ref{sec:constructions}, as they require additional technical notations.
In order to maintain the easy flow of the text presentation, the  proof of \eqref{eq:mainidentity} is  given in \ref{subsection:construction}.
The following scheme illustrates the links and shows that, to obtain our new results, we use \eqref{eq:mainidentity} for $m\geq 2$ and $l\geq 1$:

%
\begin{center}
\includegraphics[]{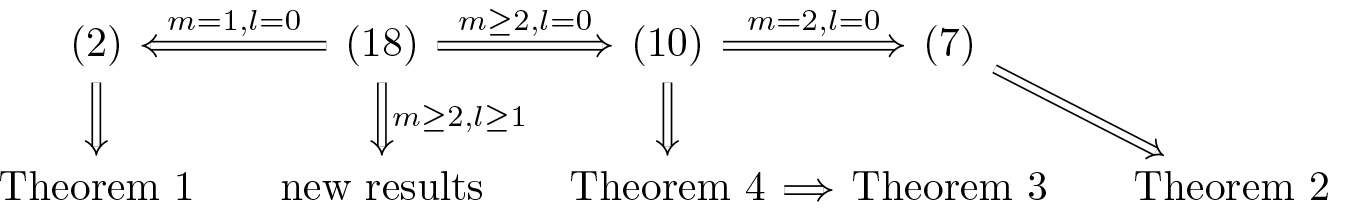}
\end{center}
(To clarify the link between $\eqref{eq:mainidentity}$ and $\eqref{eq:LinkGEVDidentity}$, we need to mention that the matrices $\widetilde{\mathbf R}_{m,0}(\mathcal T)$ and $\mathcal C_m(\mathbf A)\odot\mathcal C_m(\mathbf B)$ in \eqref{eq:LinkGEVDidentity} are obtained by removing the zero and redundant rows of
the matrices $\mathbf R_{m,0}(\mathcal T)$ and ${\mathbf\Phi}_{m,0}(\mathbf A,\mathbf B)$, respectively).

Our main results on uniqueness and algebraic algorithms for CPD are formulated, explained, and illustrated in Sections \ref{subsection:fcr}  (Theorem \ref{th:maintheoremfcr}) and \ref{subsectio:notfcr} (Theorems \ref{th:uniquenessonefm1}--\ref{th:uniquenessandalgorithmrefol}). Namely, in Sections \ref{subsection:fcr} and \ref{subsectio:notfcr} we generalize results mentioned in Subsections \ref{subsection2.2} and \ref{subsection2.3}, respectively. 
In particular, Theorem \ref{th:maintheoremfcr}
in Section \ref{subsection:fcr} is the special case of Theorem \ref{th:uniquenessandalgorithmrefol}
 in Section \ref{subsectio:notfcr}, where the third  factor matrix has full column rank, i.e.
 $r_{\mathbf C}=K=R$. For reasons of readability,  in our presentation we proceed from the easy
  Section \ref{subsection:fcr} ($r_{\mathbf C}=R$) to the more difficult Section \ref{subsectio:notfcr} ($r_{\mathbf C}\leq R$).
The proofs related to Sections \ref{subsection:fcr} and \ref{subsectio:notfcr} are given in Section \ref{proofsnfcr} and \ref{section:Appendix}.
In Section \ref{proofsnfcr} we  go from complicated to easy, i.e., in Subsections \ref{subsection6.1}--\ref{Subsection:last} we first prove the results related to Section \ref{subsectio:notfcr} and then  we derive Theorem \ref{th:maintheoremfcr}  from Theorem \ref{th:uniquenessandalgorithmrefol} in Subsection \ref{subsection6.6}.
The paper is concluded in Section \ref{sec:discussion}. 

Our presentation is in terms of real-valued tensors and real-valued factor matrices for notational convenience.
  Complex variants are easily obtained by taking into account complex conjugations.

\section{Construction of the matrices $\mathbf R_{m,l}(\mathcal T)$, $\mathbf{\Phi}_{m,l}(\mathbf A,\mathbf B)$, and
$\mathbf S_{m+l}(\mathbf C)$  }\label{sec:constructions}
Let us first introduce some additional notation.
 Throughout the paper $P_{\{l_1,\dots,l_k\}}$ denotes the set of all permutations of the set $\{l_1,\dots,l_k\}$.
 We follow the convention that if some of the values $l_1,\dots,l_k$ coincide, then
 the cardinality of $P_{\{l_1,\dots,l_k\}}$ is counted taken into account multiplicities, so that always $\textup{card}\ P_{\{l_1,\dots,l_k\}}=k!$.
 For instance, $P_{\{1,1,1\}}$ consists of  six identical entries $(1,1,1)$. One can easily check that any integer from $\{1, \dots,I^{m+l}J^{m+l}\}$ can be uniquely represented as
  $(\tilde i -1)J^{m+l}+\tilde j$ and that any integer from  $\{1,\dots,K^{m+l}\}$ can be uniquely represented
  as $\tilde k$, where 
   \begin{align}
      \tilde i &:= 1+\sum\limits_{p=1}^{m+l}(i_p-1)I^{m+l-p},  & & i_1,\dots,i_{m+l}\in\{1,\dots,I\},&\label{eq:tildei}\\
      \tilde j &:= 1+\sum\limits_{p=1}^{m+l}(j_p-1)J^{m+l-p},  & & j_1,\dots,j_{m+l}\in\{1,\dots,J\},&\label{eq:tildej}\\
      \tilde k &:= 1+\sum\limits_{p=1}^{m+l}(k_p-1)K^{m+l-p},  & & k_1,\dots,k_{m+l}\in\{1,\dots,K\}.&\label{eq:tildek}
    \end{align}
These expressions are useful for switching between  tensor, matrix and vector representations. We can now define   
$\mathbf R_{m,l}(\mathcal T)$ as follows.
 \begin{definition}\label{def:matrixtensor} Let $\mathcal T\in\mathbb R^{I\times J\times K}$. The $I^{m+l}J^{m+l}$-by-$K^{m+l}$ matrix whose
 $((\tilde i -1)J^{m+l}+\tilde j,\tilde k )$th entry is
  \begin{equation}
  \frac{1}{m!(m+l)!}
   \sum\limits_{\substack{(s_1,\dots,s_{m+l})\in\\ P_{\{k_1,\dots,k_{m+l}\}}}}
   \det\left[\begin{matrix}
   t_{i_1j_1s_1}&\dots & t_{i_1j_ms_m}\\
   \vdots       &\vdots& \vdots\\
   t_{i_mj_1s_1}&\dots & t_{i_mj_ms_m}
   \end{matrix}\right]\prod\limits_{p=1}^l
   t_{i_{m+p}j_{m+p}s_{m+p}}
   \label{eq:29}
   \end{equation}
   is denoted by  $\mathbf R_{m,l}(\mathcal T)$.
   \end{definition}
   The matrices  ${\mathbf\Phi}_{m,l}(\mathbf A,\mathbf B)$ and $\mathbf S_{m+l}(\mathbf C)$ will have
   $M(m,l,R)$ columns, where
     \begin{equation*}
      M(m,l,R) := C^m_RC^{m-1}_{m+l-1}+C^{m+1}_RC^{m}_{m+l-1}+\dots+C^{m+l}_RC^{m+l-1}_{m+l-1}. 
      \end{equation*} 
    The columns of these matrices are 
    indexed by $(m+l)$-tuples $(r_1,\dots,r_{m+l})$ such that
   \begin{equation}
   \begin{split}
     &\qquad 1\leq r_1\leq r_2\leq\dots\leq r_{m+l}\leq R \text{ and }\\
      \text{the set }&\{r_1,\dots,r_{m+l}\} \text{ contains at least } m \text{ distinct elements.} 
     \end{split}
     \label{eq:mplusltuples}
   \end{equation}
   It is easy to show that there indeed exist $M(m,l,R)$    
    $(m+l)$-tuples which  satisfy condition \eqref{eq:mplusltuples}.
    We follow the convention that the
   $(m+l)$-tuples in \eqref{eq:mplusltuples} are  ordered lexicographically:
     the  $(m+l)$-tuple $(r_1',\dots,r_{m+l}')$ is 
    preceding the  $(m+l)$-tuple $(r_1'',\dots,r_{m+l}'')$ if and only if either
    $r_1' < r_1''$ or there exists $k\in  \{1,\dots,m+l-1\}$ such that $r_1' = r_1'',\dots r_k' = r_k''$ and
    $r_{k+1}' < r_{k+1}''$.
    
    We can now define ${\mathbf\Phi}_{m,l}(\mathbf A,\mathbf B)$ and $\mathbf S_{m+l}(\mathbf C)$ as follows.
   \begin{definition}\label{def:matrixPhi}
   Let $\mathbf A\in\mathbb R^{I\times R}$, $\mathbf B\in\mathbb R^{J\times R}$. The $I^{m+l}J^{m+l}$-by-$M(m,l,R)$ matrix
   whose  $((\tilde i -1)J^{m+l}+\tilde j, (r_1,\dots,r_{m+l}))$th entry is
      \begin{equation*}
   \begin{split}
   \frac{1}{(m!)^2}
     \sum\limits_{\substack{(s_1,\dots,s_{m+l})\in\\ P_{\{r_1,\dots,r_{m+l}\}}}}
     \det&\left[\begin{matrix}
     a_{i_1s_1}&\dots & a_{i_1s_m}\\
     \vdots       &\vdots& \vdots\\
     a_{i_ms_1}&\dots & a_{i_ms_m}
     \end{matrix}\right]\cdot
     \det\left[\begin{matrix}
              b_{j_1s_1}&\dots & b_{j_1s_m}\\
              \vdots       &\vdots& \vdots\\
              b_{j_ms_1}&\dots & b_{j_ms_m}
              \end{matrix}\right]\cdot\\
    &\qquad  a_{i_{m+1}s_{m+1}}\cdots a_{i_{m+l}s_{m+l}}\cdot
    b_{j_{m+1}s_{m+1}}\cdots b_{j_{m+l}s_{m+l}}
     \end{split} 
     \end{equation*}
        is denoted by $\mathbf \Phi_{m,l}(\mathbf A,\mathbf B)$.
   \end{definition}
  \begin{definition}\label{def:matrixS}
      Let $\mathbf C\in\mathbb R^{K\times R}$. The $K^{m+l}$-by-$M(m,l,R)$ matrix
      whose\\  $(r_1,\dots,r_{m+l})$th column is
      \begin{equation}
      \frac{1}{(m+l)!} \sum\limits_{(s_1,\dots,s_{m+l})\in\ P_{\{r_1,\dots,r_{m+l}\}}}\mathbf c_{s_1}\otimes\dots\otimes\mathbf c_{s_{m+l}}
      \label{eq:def2.3star}
     \end{equation}
      is denoted by $\mathbf S_{m+l}(\mathbf C)$.
     \end{definition}
 \section{At least one factor matrix of $\mathcal T$ has full column rank}\label{subsection:fcr}
In this section we  generalize results from Subsection \ref{subsection2.2}, i.e. we assume that
 the matrix $\mathbf C$ has full column rank and without loss of generality $r_{\mathbf C}=K=R$. The more general case
 $r_{\mathbf C}=K\leq R$  is handled in Theorem \ref{th:uniquenessandalgorithmrefol} in Section \ref{subsectio:notfcr}.
 The goal of this section is to explain why and how the algebraic algorithm works in the
relatively easy but important
  case $r_{\mathbf C}=K=R$,  so that in turn
 Section \ref{subsectio:notfcr} will be more accessible.
 
It can be shown that for $l=0$,  
 condition \eqref{eq:maincondfcr} in Theorem \ref{th:maintheoremfcr} below reduces to condition \eqref{eq:Lievenfcr}. Thus,
 Theorem \ref{th:C2} is the special case of Theorem \ref{th:maintheoremfcr}  corresponding to $l=0$.
 The simulations in Example \ref{Example2.5} below indicate that  it is always possible to find some $l\geq 0$
 so that  \eqref{eq:maincondfcr}  also covers \eqref{eq:U2}.
 Although there is no general  proof, this suggests that  \eqref{eq:U2} can always be verified by checking \eqref{eq:maincondfcr} for some $l\geq 0$.
 This would  imply that  Algorithm \ref{alg:fcr} can compute the CPD of a generic tensor up to the necessary condition $R\leq(I-1)(J-1)$.
 Example \ref{Example2.5} confirms this up to $R\leq 24$.
  
Let $S^{m+l}(\mathbb R^{K^{m+l}})\subset \mathbb R^{K^{m+l}}$ denote the subspace spanned by all vectors of the form $\mathbf x\otimes\dots\otimes\mathbf x$, where $\mathbf x\in\mathbb R^K$ is repeated $m+l$ times. In other words, $S^{m+l}(\mathbb R^{K^{m+l}})$ contains  vectorized versions of all  $K\times\dots\times K$  symmetric tensors of order $m+l$, yielding $\dim S^{m+l}(\mathbb R^{K^{m+l}}) =C^{m+l}_{K+m+l-1}$. We  have the  following result.
 \begin{theorem}\label{th:maintheoremfcr}
 Let $\mathcal T=(t_{ijk})_{i,j,k=1}^{I,J,K}=[\mathbf A,\mathbf B,\mathbf C]_R$, $r_{\mathbf C}=K=R$, $l\geq 0$, and let
 the matrix $\mathbf R_{2,l}(\mathcal T)$ be defined as in Definition \ref{def:matrixtensor}.    
 Assume that
\begin{align}
 \dim  \left(\ker (\mathbf R_{2,l}(\mathcal T))\bigcap S^{2+l}(\mathbb R^{K^{2+l}})\right)=R.\label{eq:maincondfcr}
\end{align}
 Then 
 \begin{itemize}
 \item[(1)] $r_{\mathcal T} = R$ and  the  CPD of $\mathcal T$ is unique; and
 \item[(2)] the CPD of $\mathcal T$ can be found algebraically.
 \end{itemize}
  \end{theorem}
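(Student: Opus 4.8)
The plan is to exploit the factorization \eqref{eq:mainidentity} with $m=2$, namely $\mathbf R_{2,l}(\mathcal T)=\mathbf\Phi_{2,l}(\mathbf A,\mathbf B)\mathbf S_{2+l}(\mathbf C)^T$, and to turn the hypothesis \eqref{eq:maincondfcr} into an explicit description of the subspace $V:=\ker(\mathbf R_{2,l}(\mathcal T))\cap S^{2+l}(\mathbb R^{K^{2+l}})$. Since $r_{\mathbf C}=K=R$, the matrix $\mathbf C$ is invertible; I write $\mathbf d_1,\dots,\mathbf d_R$ for the columns of $\mathbf C^{-T}$, so that $\mathbf c_r^T\mathbf d_s=\delta_{rs}$. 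The first step is to show that each symmetric power $\mathbf d_r^{\otimes(2+l)}$ lies in $V$. Indeed, by Definition \ref{def:matrixS} the entry of $\mathbf S_{2+l}(\mathbf C)^T\mathbf d_r^{\otimes(2+l)}$ indexed by an admissible tuple $(r_1,\dots,r_{2+l})$ equals $\prod_{p}(\mathbf c_{r_p}^T\mathbf d_r)=\prod_p\delta_{r_p,r}$, which can be nonzero only for the tuple $(r,\dots,r)$; but such a tuple has a single distinct element and is therefore excluded by \eqref{eq:mplusltuples}, precisely because $m=2$. Hence $\mathbf S_{2+l}(\mathbf C)^T\mathbf d_r^{\otimes(2+l)}=\vzero$, and the identity gives $\mathbf R_{2,l}(\mathcal T)\mathbf d_r^{\otimes(2+l)}=\vzero$. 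As $\mathbf d_1,\dots,\mathbf d_R$ are linearly independent, so are their $(2+l)$-th symmetric powers, whence $\dim V\geq R$ unconditionally; assumption \eqref{eq:maincondfcr} then forces $V=\operatorname{span}\{\mathbf d_1^{\otimes(2+l)},\dots,\mathbf d_R^{\otimes(2+l)}\}$.

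For the algorithmic statement (2), I would reshape $V$ into an auxiliary third-order tensor whose factors are accessible through Theorem \ref{theorem:Harshman}. Fixing any basis $\mathbf v_1,\dots,\mathbf v_R$ of $V$ and writing $\mathbf v_q=\sum_r m_{qr}\mathbf d_r^{\otimes(2+l)}$, I unfold each order-$(2+l)$ symmetric tensor $\mathbf v_q$ by splitting off its first mode, obtaining the $K\times K^{1+l}$ matrix $\sum_r m_{qr}\,\mathbf d_r(\mathbf d_r^{\otimes(1+l)})^T$. Stacking these matrices along a third mode produces a $K\times K^{1+l}\times R$ tensor $\mathcal W=[\mathbf C^{-T},\,\mathbf C^{-T}\odot\cdots\odot\mathbf C^{-T},\,\mathbf M]_R$, where the middle factor is the $(1+l)$-fold Khatri--Rao product and $\mathbf M=(m_{qr})$. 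Here $\mathbf C^{-T}$ is $R\times R$ and invertible, $\mathbf M$ is invertible because it is a change-of-basis matrix between two bases of $V$, and the Khatri--Rao factor has linearly independent columns $\mathbf d_r^{\otimes(1+l)}$, hence $k$-rank at least $2$. Thus Theorem \ref{theorem:Harshman} applies and recovers $\mathbf C^{-T}$ (hence $\mathbf C$) up to permutation and scaling by a GEVD; for $l=0$ this reduces exactly to the construction of Subsection \ref{subsection2.2}. Finally, \eqref{eq:(1.7)} gives $\mathbf R_{1,0}(\mathcal T)\mathbf C^{-T}=\mathbf A\odot\mathbf B$, and reshaping each column into an $I\times J$ rank-$1$ matrix yields $\mathbf a_r$ and $\mathbf b_r$ up to scaling.

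Statement (1) would then follow because $V$ depends only on $\mathcal T$, so the description $V=\operatorname{span}\{\mathbf d_r^{\otimes(2+l)}\}$ is intrinsic. The only rank-$1$ symmetric powers contained in $V$ are the $\mathbf d_r^{\otimes(2+l)}$: expanding $\mathbf x=\sum_r\xi_r\mathbf d_r$ and requiring $\mathbf x^{\otimes(2+l)}\in V$ forces every mixed monomial $\mathbf d_{r_1}\otimes\cdots\otimes\mathbf d_{r_{2+l}}$ to vanish, and since these are linearly independent and $2+l\geq 2$, at most one $\xi_r$ is nonzero. Consequently the lines $\langle\mathbf d_1\rangle,\dots,\langle\mathbf d_R\rangle$, and hence $\mathbf C$ up to permutation and scaling, are uniquely determined by $\mathcal T$; the recovered $\mathbf A\odot\mathbf B$ and its rank-$1$ columns then determine $\mathbf A$ and $\mathbf B$.

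The main obstacle is to make uniqueness and the rank identity $r_{\mathcal T}=R$ hold against an a priori arbitrary competing CPD $\mathcal T=[\bar{\mathbf A},\bar{\mathbf B},\bar{\mathbf C}]_R$: the description of $V$ above uses the columns of $\bar{\mathbf C}^{-T}$, which presupposes that $\bar{\mathbf C}$ is invertible. I would close this gap by showing that \eqref{eq:maincondfcr} forces $\mathbf A\odot\mathbf B$ to have full column rank (equivalently, condition \eqref{eq:U2}); since $\mathbf R_{1,0}(\mathcal T)=(\mathbf A\odot\mathbf B)\mathbf C^T$ with $\mathbf C$ invertible, this makes $\mathbf R_{1,0}(\mathcal T)$ of rank $R$, so that any $R$-term factorization $(\bar{\mathbf A}\odot\bar{\mathbf B})\bar{\mathbf C}^T$ of this rank-$R$ matrix must have $\bar{\mathbf C}$ invertible, and the preceding argument then applies verbatim while simultaneously yielding $r_{\mathcal T}=R$. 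Establishing that \eqref{eq:maincondfcr} indeed forces full column rank of $\mathbf A\odot\mathbf B$ — rather than merely the weaker genericity one expects — is the delicate point, and is presumably where the passage to the general Theorem \ref{th:uniquenessandalgorithmrefol} does the real work.
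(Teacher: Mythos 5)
Your construction of the subspace $V=\ker(\mathbf R_{2,l}(\mathcal T))\cap S^{2+l}(\mathbb R^{K^{2+l}})$, the identification $V=\textup{range}(\mathbf C^{-T}\odot\dots\odot\mathbf C^{-T})$ under \eqref{eq:maincondfcr}, the reshaping into $\mathcal W=[\mathbf C^{-T},\mathbf C^{-T}\odot\dots\odot\mathbf C^{-T},\mathbf M]_R$ and the appeal to Theorem \ref{theorem:Harshman}, and the classification of the rank-one symmetric tensors in $V$ are all correct and are exactly the route the paper takes (the paper phrases it through the more general Theorem \ref{th:uniquenessandalgorithmrefol}, but for $K=R$, $m=2$ it specializes to what you wrote). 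However, the step you defer at the end --- that \eqref{eq:maincondfcr} forces $\mathbf A\odot\mathbf B$ to have full column rank --- is not a formality that ``the general theorem presumably handles''; it is the actual mathematical content of the uniqueness claim, and without it neither $r_{\mathcal T}=R$ nor statement (1) is proved (a competing decomposition with fewer terms, or with singular $\bar{\mathbf C}$, is not excluded). So as written the proposal has a genuine gap.

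Here is how the paper closes it, in terms you could splice directly into your argument. First, your own computation shows $\ker(\mathbf S_{2+l}(\mathbf C)^T)\cap S^{2+l}(\mathbb R^{K^{2+l}})=\operatorname{span}\{\mathbf d_1^{\otimes(2+l)},\dots,\mathbf d_R^{\otimes(2+l)}\}$ has dimension exactly $R$ (this is Lemma \ref{lemma:suppl}\textup{(i)} for $K=R$); combined with \eqref{eq:mainidentity} and the rank--nullity argument of Subsection \ref{Subsectionequiv}, hypothesis \eqref{eq:maincondfcr} is therefore \emph{equivalent} to the statement that $\mathbf\Phi_{2,l}(\mathbf A,\mathbf B)$ is injective on $\textup{range}(\mathbf S_{2+l}(\mathbf C)^T)$. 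This injectivity is then used twice. (a) If $(\mathbf A\odot\mathbf B)\bm\lambda=\vzero$ with $\bm\lambda^T=\mathbf x^T\mathbf C$, then $\mathbf A\Diag{\bm\lambda}\mathbf B^T=\mzero$ has rank $\leq 1$, and applying \eqref{eq:mainidentity} to the two resulting polyadic decompositions of an $I\times J\times 1$ tensor gives $\mathbf\Phi_{2,l}(\mathbf A,\mathbf B)\mathbf S_{2+l}(\mathbf C)^T\mathbf x^{\otimes(2+l)}=\vzero$ because $\mathbf\Phi_{2,l}(\tilde{\mathbf A},\tilde{\mathbf B})=\mzero$ when $r_{\tilde{\mathbf A}}\leq 1$; injectivity then forces $(\mathbf x^T\mathbf c_{r_1})\cdots(\mathbf x^T\mathbf c_{r_{2+l}})=0$ for every admissible tuple, hence $\mathbf x$ is orthogonal to at least $R-1$ columns of $\mathbf C$, i.e.\ at most one $\lambda_r$ is nonzero (this is Lemma \ref{some:lemma}, establishing condition $\text{\textup{(W{\scriptsize 2})}}$). (b) To exclude the remaining case $\mathbf a_r\otimes\mathbf b_r=\vzero$ one needs $\min(k_{\mathbf A},k_{\mathbf B})\geq 2$, which again follows from \eqref{eq:maincondfcr}: if, say, $\mathbf a_1,\mathbf a_2$ were linearly dependent, choose $\mathbf x$ with $\mathbf x^T\mathbf c_1\neq 0\neq\mathbf x^T\mathbf c_2$ and $\mathbf x\perp\mathbf c_3,\dots,\mathbf c_R$; then $\mathbf x^{\otimes(2+l)}$ lies in $\ker(\mathbf R_{2,l}(\mathcal T))\cap S^{2+l}(\mathbb R^{K^{2+l}})$ but not in $\ker(\mathbf S_{2+l}(\mathbf C)^T)$, contradicting $\dim V=R$ (Lemma \ref{lemma:onemorelemma}). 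Steps (a) and (b) together give full column rank of $\mathbf A\odot\mathbf B$, after which your argument against an arbitrary competing $[\bar{\mathbf A},\bar{\mathbf B},\bar{\mathbf C}]_R$ goes through verbatim. Note that, as the paper observes, one cannot expect anything weaker to suffice: \eqref{eq:maincondfcr} is designed precisely to be an effectively checkable strengthening of the necessary and sufficient condition \eqref{eq:U2}.
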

Condition \eqref{eq:maincondfcr} in Theorem \ref{th:maintheoremfcr} means that the intersection  
of $\ker (\mathbf R_{2,l}(\mathcal T))$ and $S^{2+l}(\mathbb R^{K^{2+l}})$
has the minimal possible dimension. Indeed, by  \eqref{eq:mainidentity}, Definition \ref{def:matrixS}, and the assumption $r_{\mathbf C}=K=R$, 
we have that the intersection contains at least $R$ linearly independent vectors:
      \begin{equation*}
            \begin{split}
      &\ker (\mathbf R_{2,l}(\mathcal T))\bigcap S^{2+l}(\mathbb R^{K^{2+l}})=
      \ker ({\mathbf\Phi}_{2,l}(\mathbf A,\mathbf B)\mathbf S_{2+l}(\mathbf C)^T)\bigcap S^{2+l}(\mathbb R^{K^{2+l}})\supseteq \\
      &\ker (\mathbf S_{2+l}(\mathbf C)^T)\bigcap S^{2+l}(\mathbb R^{K^{2+l}})\ni \mathbf x\otimes\dots\otimes\mathbf x,\quad \mathbf x\ \text{ is a column of }\ \mathbf C^{-T}.
       \end{split}
       \end{equation*}
       
 The  procedure that constitutes the proof of Theorem \ref{th:maintheoremfcr}(2) is summarized
 as Algorithm \ref{alg:fcr}. Let us comment on the different steps.
 From Definition \ref{def:matrixtensor} it follows that the rows of the matrix 
 $\mathbf R_{2,l}(\mathcal T)$ are vectorized versions of   $K\times\dots\times K$  symmetric tensors of order $2+l$. Consistently,  in step 2,
  we find the vectors $\mathbf w_1,\dots,\mathbf w_R$  that form a basis  of the orthogonal complement to $\textup{range}( \mathbf R_{2,l}(\mathcal T)^T)$ in the space $S^{2+l}(\mathbb R^{R^{2+l}})$. 
  In other words, $\operatorname{span}\{\mathbf w_1,\dots,\mathbf w_R\} = \ker (\mathbf R_{2,l}(\mathcal T))\bigcap S^{2+l}(\mathbb R^{K^{2+l}})$.
  If this subspace has  minimal dimension, then its structure provides a key to the estimation of $\mathbf C$. Indeed, we have already
  explained  that the minimal subspace is given by 
       \begin{equation}
     \label{eq:CCC}
     \ker( \mathbf R_{2,l}(\mathcal T))\bigcap S^{2+l}(\mathbb R^{R^{2+l}}) = \textup{range}\left(\underbrace{\mathbf C^{-T}\odot\dots\odot\mathbf C^{-T}}_{2+l}\right).
     \end{equation}
  In steps 4--5 we recover $\mathbf C^{-T}$ from $\mathbf W$ using \eqref{eq:CCC} as follows. By \eqref{eq:CCC},
   there exists a unique nonsingular $R\times R$ matrix $\mathbf M$ such that
 \begin{equation}
     \label{eq:WeqCCCM}
     \mathbf W = \left(\mathbf C^{-T}\odot\dots\odot\mathbf C^{-T}\right)\mathbf M^T.
      \end{equation}
  In step 4, we construct the tensor $\mathcal W$ whose  vectorized frontal slices are the vectors $\mathbf w_1,\dots,\mathbf w_R$. 
  Reshaping both sides of \eqref{eq:WeqCCCM}  we obtain the CPD $\mathcal W=[\mathbf C^{-T}, \mathbf C^{-T}\odot\dots\odot\mathbf C^{-T},\mathbf M]_R$.
  In step 5, we find the CPD by means of a GEVD   using the fact that all factor matrices of  $\mathcal W$ have full column rank, i.e., we have reduced the problem to a situation that is covered by the basic  Theorem \ref{theorem:Harshman}. Finally, in step 6 we recover $\mathbf A$ and $\mathbf B$
  from $\mathbf R_{1,0}(\mathcal T)\mathbf C^{-T}=\mathbf A\odot\mathbf B$  using the fact that the columns
   of $\mathbf A\odot\mathbf B$ are vectorized  rank-$1$ matrices.
  \begin{algorithm}
  \caption{(Computation of CPD, $K=R$ (see Theorem \ref{th:maintheoremfcr}(ii)))}
  \label{alg:fcr}
  \begin{algorithmic}[1]
  \Require $\mathcal T\in\mathbb R^{I\times J\times R}$ and $l\geq 0$  with the property that  there exist $\mathbf A\in\mathbb R^{I\times R}$, $\mathbf B\in\mathbb R^{J\times R}$, and $\mathbf C\in\mathbb R^{R\times R}$ such that $R\geq 2$, $\mathcal T=[\mathbf A,\mathbf B,\mathbf C]_R$, $r_{\mathbf C}=R$, and \eqref{eq:maincondfcr} holds.
  \Ensure Matrices $\mathbf A\in\mathbb R^{I\times R}$, $\mathbf B\in\mathbb R^{J\times R}$ and $\mathbf C\in\mathbb R^{R\times R}$ such that $\mathcal T=[\mathbf A,\mathbf B,\mathbf C]_R$
   \State Construct  the $I^{2+l}J^{2+l}\times R^{2+l}$ matrix
  $\mathbf R_{2,l}(\mathcal T)$ by Definition \ref{def:matrixtensor}. 
   \State Find $\mathbf w_1,\dots,\mathbf w_R$ that form a basis  of $\ker( \mathbf R_{2,l}(\mathcal T))\bigcap S^{2+l}(\mathbb R^{R^{2+l}})$
  \State $\mathbf W\leftarrow [\mathbf w_1\ \dots\ \mathbf w_R]$
  \State Reshape the $R^{2+l}\times R$ matrix $\mathbf W$ into an $R\times R^{1+l}\times R$ tensor $\mathcal W$
  \State Compute the CPD
  \Statex $\mathcal W=[\mathbf C^{-T}, \mathbf C^{-T}\odot\dots\odot\mathbf C^{-T},\mathbf M]_R$\quad  ({\em $\mathbf M$ is a by-product})\ \ \ \ \ \ \ \ \ \ (GEVD)
   \State Find the columns of $\mathbf A$ and $\mathbf B$ from the equation $\mathbf A\odot\mathbf B=\mathbf R_{1,0}(\mathcal T)\mathbf C^{-T}$
  \end{algorithmic}
  \end{algorithm}
  
The following example demonstrates that the CPD  can  effectively be computed by Algorithm \ref{alg:fcr} for
$R\leq \min((I-1)(J-1),24)$.

\begin{example}\label{Example2.5}
We consider $I\times J\times (I-1)(J-1)$ tensors generated as a sum of  
$R=(I-1)(J-1)$ random  rank-$1$ tensors. More precisely, the tensors  are generated by a PD
$[\mathbf A,\mathbf B,\mathbf C]_R$ in which the  entries of
$\mathbf A$, $\mathbf B$, and $\mathbf C$ are independently drawn from the standard normal distribution $N(0,1)$.
We try different values $l=0,1,\dots,$ until condition \eqref{eq:maincondfcr} is met (assuming that this will be the case for some $l\geq 0$).
We test all cases $I\times J\times (I-1)(J-1)$ such that $I\geq 3$, $J\geq 3$, and 
$(I-1)(J-1)\leq 24$. The results are shown in Table \ref{table2}.
In all cases  \eqref{eq:maincondfcr} indeed holds for some $l\leq 2$; the actual value of
$l$ does not depend on the random trial, i.e., it  is constant for tensors of the same dimensions and rank. By comparison, the algebraic algorithm from \cite{DeLathauwer2006,LinkGEVD} is limited to the cases where $l=0$, which is not always sufficient to reach the bound $R\leq(I-1)(J-1)$.
In our implementation,  we retrieved  the vectors $\mathbf w_1,\dots,\mathbf w_R$ from the $R$-dimensional null space of  a
$C_{R+l+1}^{2+l}\times C_{R+l+1}^{2+l}$ positive  semi-definite  matrix $\mathbf Q$. The storage of $\mathbf Q$ is
the main bottleneck in our implementation.
To give some insight in the  complexity of the algorithm we included the computational time (averaged over 100 random tensors)
and the size of $\mathbf Q$ in the table. We implemented Algorithm \ref{alg:fcr} in MATLAB  2014a (the implementation was not optimized), and we did experiments on a computer with Intel Core 2 Quad CPUQ9650\@ 3.00 GHz$\times$4 and 8GB memory running Ubuntu 12.04.5 LTS. 
 \end{example}
 \begin{table}[!h]
       \begin{center}
    \caption{ Values of parameter $l$ in Theorem \ref{th:maintheoremfcr} and computational cost of Algorithm \ref{alg:fcr} for
    $I\times J\times (I-1)(J-1)$ tensors of rank $R=(I-1)(J-1)\leq 24$ (see Example \ref{Example2.5} for details). Note that 
    the CPD is not generically unique if $R>(I-1)(J-1)$ (see Subsection \ref{subsection:2.4}). In all cases a value of $l$ is found such that
    Algorithm \ref{alg:fcr} can be used. The rows with $l\geq 1$ are new results.
    }
    \begin{tabular}{|c|c|c|c|}
    \hline
    dimensions of $\mathcal T$&    $l$&  $C_{R+l+1}^{2+l}$ &  computational time (sec)\\
    \hline
    $3\times 3\times 4$       &0& 10 & 0.02\\
    $3\times 4\times 6$       &0& 21 & 0.035\\
    $3\times 5\times 8$       &0& 36 & 0.051\\
    $3\times 6\times 10$     &0& 55 & 0.074\\
    $3 \times 7 \times 12$   &1& 364 & 0.403\\         
    $3 \times 8 \times 14$   &1& 560    &   0.796\\      
    $3 \times 9 \times 16$   &1& 816   &    1.498\\
    $3 \times 10 \times 18$   &1& 1140   &   2.617\\     
    $3 \times 11 \times 20$   &1& 1540   &    5.032\\     
    $3 \times 12 \times 22$   &1& 2024   &   7.089\\  
    $3 \times 13 \times 24$   &1& 2600   &     11.084\\   
    $4\times 4\times 9$       &0& 45 &  0.06\\
    $4 \times 5 \times 12$   &1& 364     &   0.401\\ 
    $4 \times 6 \times 15$   &1& 680    &  1.096\\
    $4 \times 7 \times 18$   &2&  5985   &    30.941\\          
    $4 \times 8 \times 21$   &2&  10626   &     93.03\\
    $4 \times 9 \times 24$   &2&  17550   & 360.279 \\
    $5 \times 5 \times 16$   &1& 816   & 1.473 \\      
    $5 \times 6 \times 20$   &2&  8855   & 64.116 \\               
    $5 \times 7 \times 24$   &2&  17550   & 351.968 \\
    \hline
    \end{tabular}
    \label{table2}
    \end{center}
    \end{table}
The next example illustrates that  Algorithm  \ref{alg:fcr} may outperform optimization algorithms.
 \begin{example}\label{example:manyinits}
  Let $\mathcal T=[\mathbf A,\mathbf B, \mathbf C]_{12}\in\mathbb R^{3\times 7\times 12}$, with
  \begin{equation*}
  \begin{split}
  \mathbf A &= hankel((1,2,3),(3, 5, 7, 0, 6, 6, 7, 9, 0, 8, 2, 1)^T),\\
   \mathbf B &= [\mathbf I_7\ hankel((1,2,3,4,5,6,7), (7,0,1,2,3)^T)],\qquad
   \mathbf C = \mathbf I_{12},
   \end{split}
   \end{equation*}
   where $hankel(\mathbf c,\mathbf r^T)$ denotes a Hankel matrix whose first column is $\mathbf c$ and whose last row is $\mathbf r^T$. It turns out that \eqref{eq:maincondfcr} holds for $l=1$. It takes  less than $1$ second to compute the CPD of $\mathcal T$ by Algorithm \ref{alg:fcr}.
   On the other hand, it proves to be very difficult to find the CPD by means of numerical optimization.
   Among other optimization-based algorithms we tested the Gauss-Newton  dogleg trust region method  \cite{Sorber}.
   The algorithm was restarted  $500$ times from various random initial positions.
   %
   %
   In only 4 cases  the residual\\ $\|\mathcal T-[\mathbf A_{est},\mathbf B_{est},\mathbf C_{est}]_{12}\|/\|\mathcal T\|$ after $10000$ iterations 
   was   of the order of $0.0001$ and in all cases the estimated factor matrices were far from the true  matrices.
Other optimization-based algorithms did not yield better results.
    \end{example}
  \section{None of the factor matrices  is required to have full column rank}\label{subsectio:notfcr}
  In this subsection we consider the PD  $\mathcal T=(t_{ijk})_{i,j,k=1}^{I,J,K}=[\mathbf A,\mathbf B,\mathbf C]_R$ and extend results of the previous subsection to the case $r_{\mathbf C}=K\leq R$.
 \subsection{Results on uniqueness of one factor matrix and overall CPD}
We have two results on uniqueness of the third factor matrix.
 \begin{theorem}\label{th:uniquenessonefm1}
  Let $\mathcal T=(t_{ijk})_{i,j,k=1}^{I,J,K}=[\mathbf A,\mathbf B,\mathbf C]_R$, $r_{\mathbf C}=K\leq R$, $m=R-K+2$, and $l_1,\dots,l_m$
  be nonnegative integers. Let also
the matrices  $\mathbf \Phi_{1,l_1}(\mathbf A,\mathbf B),\dots,\mathbf \Phi_{m,l_m}(\mathbf A,\mathbf B)$ and
$\mathbf S_{1+l_1}(\mathbf C),\ \dots,\ \mathbf S_{m+l_m}(\mathbf C)$ be defined as in
Definition \ref{def:matrixPhi} and Definition \ref{def:matrixS}, respectively. Let 
$\mathbf U_1,\dots,\mathbf U_m$  be matrices such that their columns  form bases for
  $\textup{range}(\mathbf S_{1+l_1}(\mathbf C)^T), \dots, \textup{range}(\mathbf S_{m+l_m}(\mathbf C)^T)$,  respectively.
   Assume that
 \begin{itemize}
 \item[\textup{(i)}] $k_{\mathbf C}\geq 1$; and
 \item[\textup{(ii)}] $\mathbf A\odot\mathbf B$ has full column rank; and
 \item[\textup{(iii)}]  $\mathbf \Phi_{1,l_1}(\mathbf A,\mathbf B)\mathbf U_1$, \dots, $\mathbf \Phi_{m,l_m}(\mathbf A,\mathbf B)\mathbf U_m$ have full column rank.
 \end{itemize}
 Then $r_{\mathcal T} = R$ and  the  third factor matrix of $\mathcal T$ is unique. 
 \end{theorem}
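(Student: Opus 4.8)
The plan is to fix two arbitrary decompositions $\mathcal T=[\mathbf A,\mathbf B,\mathbf C]_R=[\bar{\mathbf A},\bar{\mathbf B},\bar{\mathbf C}]_{R'}$ with $R'\le R$ (one exists because the hypothesis already exhibits an $R$-term decomposition, so $r_{\mathcal T}\le R$) and to show that every such $\bar{\mathbf C}$ satisfies $R'=R$ and $\bar{\mathbf C}=\mathbf C\mathbf\Pi\mathbf\Lambda$ for some permutation $\mathbf\Pi$ and nonsingular diagonal $\mathbf\Lambda$; this yields simultaneously $r_{\mathcal T}=R$ and uniqueness of the third factor matrix. The engine is the master identity \eqref{eq:mainidentity}, instantiated at the $m$ pairs $(k,l_k)$, $k=1,\dots,m$, giving $\mathbf R_{k,l_k}(\mathcal T)=\mathbf\Phi_{k,l_k}(\mathbf A,\mathbf B)\mathbf S_{k+l_k}(\mathbf C)^T$. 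Since the left-hand sides depend only on $\mathcal T$, the same matrices $\mathbf R_{k,l_k}(\mathcal T)$ are produced by the barred decomposition as $\mathbf\Phi_{k,l_k}(\bar{\mathbf A},\bar{\mathbf B})\mathbf S_{k+l_k}(\bar{\mathbf C})^T$.

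The first step is to read off from $\mathcal T$ the ranges $\textup{range}(\mathbf S_{k+l_k}(\mathbf C))$. Writing $\mathbf S_{k+l_k}(\mathbf C)^T=\mathbf U_k\mathbf V_k$, where the columns of $\mathbf U_k$ are the prescribed basis of $\textup{range}(\mathbf S_{k+l_k}(\mathbf C)^T)$ and $\mathbf V_k$ consequently has full row rank, the identity becomes $\mathbf R_{k,l_k}(\mathcal T)=\big(\mathbf\Phi_{k,l_k}(\mathbf A,\mathbf B)\mathbf U_k\big)\mathbf V_k$. By hypothesis (iii) the left factor has full column rank and $\mathbf V_k$ has full row rank, so this is a rank factorization and the row space of $\mathbf R_{k,l_k}(\mathcal T)$ coincides with that of $\mathbf S_{k+l_k}(\mathbf C)^T$, i.e.
\[
\textup{range}\big(\mathbf R_{k,l_k}(\mathcal T)^T\big)=\textup{range}\big(\mathbf S_{k+l_k}(\mathbf C)\big),\qquad k=1,\dots,m.
\]
Condition (ii) is precisely the $k=1$, $l=0$ instance of (iii) (after removal of the redundant rows noted below the diagram, $\mathbf\Phi_{1,0}(\mathbf A,\mathbf B)$ reduces to $\mathbf A\odot\mathbf B$), so it serves as the low-order anchor. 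Applying the same identity to the barred decomposition gives only the inclusion $\textup{range}(\mathbf R_{k,l_k}(\mathcal T)^T)\subseteq\textup{range}(\mathbf S_{k+l_k}(\bar{\mathbf C}))$, whence
\[
\textup{range}\big(\mathbf S_{k+l_k}(\mathbf C)\big)\subseteq\textup{range}\big(\mathbf S_{k+l_k}(\bar{\mathbf C})\big),\qquad k=1,\dots,m.
\]

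The second step converts these subspace inclusions into column equivalence. Each $\textup{range}(\mathbf S_{k+l_k}(\mathbf C))$ is spanned by the symmetrized products $\mathbf c_{s_1}\otimes\dots\otimes\mathbf c_{s_{k+l_k}}$ attached to the index patterns \eqref{eq:mplusltuples} that contain at least $k$ distinct indices among $\mathbf c_1,\dots,\mathbf c_R$. The top order $k=m=R-K+2$ is decisive: since $r_{\mathbf C}=K$, exactly $R-K$ columns are redundant, and patterns with $m$ distinct indices are the sparsest symmetric monomials in the $\mathbf c_r$ whose span is still forced into the barred range. I would feed these inclusions, together with $k_{\mathbf C}\ge 1$ (no zero columns) and $r_{\mathbf C}=K$, into a Kruskal-type permutation/reshaping lemma of the kind established for compound matrices in \cite{PartI,PartII}, to conclude that every one-dimensional space $\langle\mathbf c_r\rangle$ must occur among the columns of $\bar{\mathbf C}$. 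This forces $R'\ge R$, hence $R'=R=r_{\mathcal T}$ and $\bar{\mathbf C}=\mathbf C\mathbf\Pi\mathbf\Lambda$; condition (ii), guaranteeing that the $R$ rank-one matrices $\mathbf a_r\mathbf b_r^T$ are linearly independent, is what certifies that all $R$ directions are genuinely needed.

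The main obstacle is exactly this second step: passing from the chain of inclusions $\textup{range}(\mathbf S_{k+l_k}(\mathbf C))\subseteq\textup{range}(\mathbf S_{k+l_k}(\bar{\mathbf C}))$ to the column-wise equivalence of $\mathbf C$ and $\bar{\mathbf C}$. The first step is essentially bookkeeping with \eqref{eq:mainidentity} and rank factorizations; by contrast, controlling how the symmetric-product spans of an \emph{unknown} $\bar{\mathbf C}$ can contain those of $\mathbf C$ — simultaneously at all orders $k=1,\dots,m$ and without assuming any rank property of $\bar{\mathbf C}$ — is the delicate combinatorial core, where the precise value $m=R-K+2$ and the permutation lemma must be used in full.
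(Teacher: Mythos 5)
Your first step is sound bookkeeping: under (iii) the factorization $\mathbf R_{k,l_k}(\mathcal T)=\bigl(\mathbf\Phi_{k,l_k}(\mathbf A,\mathbf B)\mathbf U_k\bigr)\mathbf V_k$ is indeed a rank factorization, so the row-space identity $\textup{range}(\mathbf R_{k,l_k}(\mathcal T)^T)=\textup{range}(\mathbf S_{k+l_k}(\mathbf C))$ and the inclusions into $\textup{range}(\mathbf S_{k+l_k}(\bar{\mathbf C}))$ follow. (One slip: (ii) is \emph{not} the $k=1$, $l=0$ instance of (iii). Since $r_{\mathbf C}=K\leq R$, the matrix $\mathbf U_1$ spans only a proper subspace of $\mathbb R^R$ when $K<R$, so full column rank of $\mathbf\Phi_{1,l_1}(\mathbf A,\mathbf B)\mathbf U_1$ is strictly weaker than full column rank of $\mathbf A\odot\mathbf B$; (ii) is a genuinely separate hypothesis.) The real problem is that your entire second step is not a proof but a pointer to an unspecified ``Kruskal-type permutation/reshaping lemma''. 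You acknowledge this yourself, and it is exactly where the content of the theorem lies: no lemma in \cite{PartI,PartII} takes as input simultaneous range inclusions of symmetric Khatri--Rao powers of $\mathbf C$ into those of an unknown $\bar{\mathbf C}$ (of unknown rank and $k$-rank) and outputs column equivalence, and it is not clear these inclusions are even strong enough -- at order one the inclusion $\textup{range}(\mathbf C)\subseteq\textup{range}(\bar{\mathbf C})$ carries no information about individual columns, and at higher orders the needed combinatorial argument is precisely what would have to be invented.

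The paper closes this gap by a different and much shorter route that never compares two decompositions directly. Lemma \ref{some:lemma} shows that the hypothesis that $\mathbf\Phi_{k,l_k}(\mathbf A,\mathbf B)\mathbf U_k$ has full column rank implies condition $\text{\textup{(W{\scriptsize k})}}$ for the triplet $(\mathbf A,\mathbf B,\mathbf C)$: any $\bm\lambda=\mathbf x^T\mathbf C\in\textup{range}(\mathbf C^T)$ with $r_{\mathbf A\Diag{\bm{\lambda}}\mathbf B^T}\leq k-1$ has at most $k-1$ nonzero entries. The mechanism is to view $\mathbf A\Diag{\bm\lambda}\mathbf B^T=\tilde{\mathbf A}\tilde{\mathbf B}^T$ as two PDs of an $I\times J\times 1$ tensor, apply \eqref{eq:mainidentity} to both sides, note that $\mathbf\Phi_{k,l}(\tilde{\mathbf A},\tilde{\mathbf B})$ vanishes when $\max(r_{\tilde{\mathbf A}},r_{\tilde{\mathbf B}})\leq k-1$, and then use (iii) to force $\mathbf S_{k+l}(\mathbf C)^T(\mathbf x\otimes\dots\otimes\mathbf x)=\mathbf 0$, whence $\mathbf x$ is orthogonal to at least $R-k+1$ columns of $\mathbf C$. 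With $\text{\textup{(W{\scriptsize m})}},\dots,\text{\textup{(W{\scriptsize 1})}}$ established, rank and uniqueness of the third factor matrix follow at once from Proposition \ref{prmostgeneraldis} (\cite[Proposition 4.9]{PartI}), which is where the permutation-lemma machinery you are reaching for actually lives. If you want to repair your plan, the missing device is the standard one of contracting $\mathcal T$ with vectors $\mathbf x$ to produce low-rank matrices $\mathbf A\Diag{\mathbf C^T\mathbf x}\mathbf B^T=\bar{\mathbf A}\Diag{\bar{\mathbf C}^T\mathbf x}\bar{\mathbf B}^T$ -- that is, the $\text{\textup{(W{\scriptsize k})}}$ conditions -- rather than range inclusions of the $\mathbf S$ matrices.
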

 
  According to the following theorem the set of matrices in \textup{(iii)} in Theorem \ref{th:uniquenessonefm1} can be reduced to a single matrix
  if  $R\leq \min(k_{\mathbf A}, k_{\mathbf B})+K-1$.
  
      \begin{theorem}\label{th:uniquenessonefm2}
   Let $\mathcal T=(t_{ijk})_{i,j,k=1}^{I,J,K}=[\mathbf A,\mathbf B,\mathbf C]_R$, $r_{\mathbf C}=K\leq R$, $m=R-K+2$, and $l\geq 0$.
   Let also  the matrices $\mathbf \Phi_{m,l}(\mathbf A,\mathbf B)$ and 
   $\mathbf S_{m+l}(\mathbf C)$
   be defined as in Definition \ref{def:matrixPhi} and Definition \ref{def:matrixS}, respectively.
   Let $\mathbf U_m$ be a matrix such that its columns  form a basis for
   $\textup{range}(\mathbf S_{m+l}(\mathbf C)^T)$.
  Assume that
  \begin{itemize}
  \item[\textup{(i)}] $k_{\mathbf C}\geq 1$; and
  \item[\textup{(ii)}] $\mathbf A\odot\mathbf B$ has full column rank; and
  \item[\textup{(iii)}] $\min(k_{\mathbf A}, k_{\mathbf B})\geq m-1$; and
  \item[\textup{(iv)}]  the matrix $\mathbf \Phi_{m,l}(\mathbf A,\mathbf B)\mathbf U_m$ has full column rank.
  \end{itemize}
  Then $r_{\mathcal T} = R$ and  the  third factor matrix of $\mathcal T$ is unique. 
  \end{theorem}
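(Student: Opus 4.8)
The plan is to derive Theorem~\ref{th:uniquenessonefm2} from Theorem~\ref{th:uniquenessonefm1} by verifying the hypotheses of the latter for a suitable choice of the free parameters $l_1,\dots,l_m$. Conditions (i) and (ii) are literally the same in both statements, so the entire task is to exhibit nonnegative integers $l_1,\dots,l_m$ for which each of $\mathbf \Phi_{1,l_1}(\mathbf A,\mathbf B)\mathbf U_1,\dots,\mathbf \Phi_{m,l_m}(\mathbf A,\mathbf B)\mathbf U_m$ has full column rank. I would take $l_m:=l$, so that the matrix for the index $j=m$ is precisely the one in hypothesis (iv) and is full column rank by assumption; the indices $j=1,\dots,m-1$ remain, and for these I am free to pick any $l_j$, since whether $\mathbf \Phi_{j,l_j}(\mathbf A,\mathbf B)\mathbf U_j$ has full column rank does not depend on the particular basis $\mathbf U_j$.

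The crux is the following auxiliary claim: if $k_{\mathbf C}\geq 1$, the matrix $\mathbf A\odot\mathbf B$ has full column rank, and $\min(k_{\mathbf A},k_{\mathbf B})\geq j$, then $\mathbf \Phi_{j,l_j}(\mathbf A,\mathbf B)\mathbf U_j$ has full column rank for every $l_j\geq 0$. Hypothesis (iii) of Theorem~\ref{th:uniquenessonefm2}, namely $\min(k_{\mathbf A},k_{\mathbf B})\geq m-1$, gives $\min(k_{\mathbf A},k_{\mathbf B})\geq j$ for all $j\leq m-1$, so the claim settles every index below $m$ simultaneously. This also explains why the top index $j=m$ must be postulated separately as (iv) rather than deduced: applying the claim at $j=m$ would require $\min(k_{\mathbf A},k_{\mathbf B})\geq m$, one unit more than is available. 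In this precise sense the $m$ full-column-rank requirements of Theorem~\ref{th:uniquenessonefm1}(iii) collapse to the single requirement (iv) of Theorem~\ref{th:uniquenessonefm2}.

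To prove the claim I would first dispose of $j=1$, which is transparent. Writing $\mathbf D:=\mathbf A\odot\mathbf B$ and comparing Definitions~\ref{def:matrixPhi} and~\ref{def:matrixS}, one checks that $\mathbf \Phi_{1,l_1}(\mathbf A,\mathbf B)$ coincides, up to a fixed row reordering and scalar, with the symmetric-power matrix $\mathbf S_{1+l_1}(\mathbf D)$; since the columns $\mathbf a_r\otimes\mathbf b_r$ of $\mathbf D$ are linearly independent by (ii), their symmetrized products are linearly independent, whence $\mathbf \Phi_{1,l_1}(\mathbf A,\mathbf B)$ has full column rank, and so does its product with the full-column-rank basis matrix $\mathbf U_1$. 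For $1<j\leq m-1$ the entries of $\mathbf \Phi_{j,l_j}(\mathbf A,\mathbf B)$ are symmetrizations of products of a $j\times j$ minor of $\mathbf A$, a $j\times j$ minor of $\mathbf B$, and $l_j$ additional entries of $\mathbf A$ and $\mathbf B$; the bound $\min(k_{\mathbf A},k_{\mathbf B})\geq j$ guarantees that none of these minors degenerate, equivalently that $\mathcal C_j(\mathbf A)$ and $\mathcal C_j(\mathbf B)$ have no zero columns. The aim is then to show that no nonzero $\mathbf z\in\textup{range}(\mathbf S_{j+l_j}(\mathbf C)^T)$ satisfies $\mathbf \Phi_{j,l_j}(\mathbf A,\mathbf B)\mathbf z=\mathbf 0$; I would extract this from the identity~\eqref{eq:mainidentity}, using $k_{\mathbf C}\geq 1$ to translate it into the linear independence of appropriate symmetrized columns and the $k$-rank bounds to keep the relevant minors nonzero.

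Once the claim is established, hypothesis (iii) of Theorem~\ref{th:uniquenessonefm1} holds for the tuple $(l_1,\dots,l_{m-1},l)$, hypotheses (i) and (ii) are inherited verbatim, and Theorem~\ref{th:uniquenessonefm1} delivers $r_{\mathcal T}=R$ together with uniqueness of the third factor matrix, which is exactly the desired conclusion. The main obstacle is the claim in the intermediate range $1<j<m$: unlike $j=1$, the matrix $\mathbf \Phi_{j,l_j}(\mathbf A,\mathbf B)$ is not merely a symmetric power of a full-column-rank matrix, and because $r_{\mathbf C}=K\leq R$ it need not have full column rank on its own---only its restriction to the $\mathbf C$-determined symmetric subspace $\textup{range}(\mathbf S_{j+l_j}(\mathbf C)^T)$ is injective. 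Making this restriction argument rigorous, by combining the nondegeneracy of the $j\times j$ minors coming from the $k$-rank hypothesis with the symmetric structure carried by $\mathbf U_j$ and the interplay between $\mathcal C_j(\mathbf A),\mathcal C_j(\mathbf B)$ and $\mathbf A\odot\mathbf B$, is the delicate part of the argument.
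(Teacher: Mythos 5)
Your plan to deduce Theorem~\ref{th:uniquenessonefm2} from Theorem~\ref{th:uniquenessonefm1} hinges on the auxiliary claim that $k_{\mathbf C}\geq 1$, full column rank of $\mathbf A\odot\mathbf B$, and $\min(k_{\mathbf A},k_{\mathbf B})\geq j$ already force $\mathbf\Phi_{j,l_j}(\mathbf A,\mathbf B)\mathbf U_j$ to have full column rank. This claim is false for $2\leq j\leq m-1$, which is exactly the range you flag as ``delicate''. Indeed, by Lemma~\ref{some:lemma}, full column rank of $\mathbf\Phi_{j,l_j}(\mathbf A,\mathbf B)\mathbf U_j$ implies condition $(\textup{W}_j)$ of Definition~\ref{Def:Wm}, i.e.\ no $\bm\lambda\in\textup{range}(\mathbf C^T)$ with $j$ or more nonzero entries may satisfy $r_{\mathbf A\Diag{\bm\lambda}\mathbf B^T}\leq j-1$; and this is not a consequence of the $k$-rank hypotheses. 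Concretely, take $j=2$, $I=J=2$, $R=3$, $K=2$ (so $m=3$ and $j=m-1$),
\begin{equation*}
\mathbf A=\begin{pmatrix}1&0&1\\0&1&1\end{pmatrix},\qquad
\mathbf B=\begin{pmatrix}1&0&1\\0&1&-1\end{pmatrix},\qquad
\mathbf C=\begin{pmatrix}1&\tfrac{1}{2}&1\\0&1&1\end{pmatrix}.
\end{equation*}
Then $k_{\mathbf A}=k_{\mathbf B}=k_{\mathbf C}=2$ and $\mathbf A\odot\mathbf B$ has full column rank, yet $\bm\lambda=(1,\tfrac{1}{2},1)=\mathbf C^T(1,0)^T$ has three nonzero entries while $\mathbf A\Diag{\bm\lambda}\mathbf B^T=\bigl(\begin{smallmatrix}2&-1\\1&-1/2\end{smallmatrix}\bigr)$ has rank one. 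So $(\textup{W}_2)$ fails and, by the contrapositive of Lemma~\ref{some:lemma}, $\mathbf\Phi_{2,l_2}(\mathbf A,\mathbf B)\mathbf U_2$ fails to have full column rank for \emph{every} $l_2\geq 0$. (Your treatment of $j=1$ via the identification of $\mathbf\Phi_{1,l_1}(\mathbf A,\mathbf B)$ with a symmetric power of $\mathbf A\odot\mathbf B$, and of $j=m$ with hypothesis (iv), is fine; the intermediate cases are the ones that cannot be filled in this way.)

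The paper does not reduce Theorem~\ref{th:uniquenessonefm2} to Theorem~\ref{th:uniquenessonefm1}. It applies Lemma~\ref{some:lemma} once, to hypothesis (iv), to conclude that condition $(\textup{W}_m)$ holds, and then invokes Proposition~\ref{prmostgeneraldis2} (that is, \cite[Corollary~4.10]{PartI}), whose hypotheses are precisely (i), (ii), (iii) and $(\textup{W}_m)$. The collapse of the $m$ rank conditions of Theorem~\ref{th:uniquenessonefm1}(iii) into the single condition (iv) takes place at the level of the scalar conditions $(\textup{W}_k)$ — under $\min(k_{\mathbf A},k_{\mathbf B})\geq m-1$ the standard $k$-rank bound gives $r_{\mathbf A\Diag{\bm\lambda}\mathbf B^T}\geq\omega(\bm\lambda)$ whenever $\omega(\bm\lambda)\leq m-1$, so $(\textup{W}_m)$ implies $(\textup{W}_{m-1}),\dots,(\textup{W}_1)$ — and not at the level of the matrices $\mathbf\Phi_{j,l_j}(\mathbf A,\mathbf B)\mathbf U_j$. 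To repair your argument, route the reduction through Proposition~\ref{prmostgeneraldis2} rather than through Theorem~\ref{th:uniquenessonefm1}.
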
 
  
   The assumptions in Theorems \ref{th:uniquenessonefm1} and \ref{th:uniquenessonefm2} complement each other as follows: 
   in Theorem \ref{th:uniquenessonefm1} we do not require that the condition $\min(k_{\mathbf A}, k_{\mathbf B})\geq m-1$ holds while in Theorem \ref{th:uniquenessonefm2} we do not require that
   the matrices $\mathbf \Phi_{k,l_k}(\mathbf A,\mathbf B)\mathbf U_k$, $1\leq k\leq m-1$
   have full column rank. 
   
    It was shown in \cite[Proposition 1.20]{PartII} that  if $\mathcal T $
      has two PDs $\mathcal T=$$[\mathbf A,\mathbf B,\mathbf C]_R$ and $\mathcal T=[\bar{\mathbf A},\bar{\mathbf B},\mathbf C]_R$ that share the factor matrix $\mathbf C$ and if
      the condition
      \begin{equation}
      \max(\min(k_{\mathbf A},k_{\mathbf B}-1),\ \min(k_{\mathbf A}-1,k_{\mathbf B}))+k_{\mathbf C}\geq R+1\label{eq:uniqviaonefm}
      \end{equation}   
      holds, then both PDs consist of the same rank-one terms. 
      Thus, combining Theorems \ref{th:uniquenessonefm1}--\ref{th:uniquenessonefm2}
      with \cite[Proposition 1.20]{PartII} we directly obtain the following result on  uniqueness of the overall CPD.
      \begin{theorem}\label{theorem2.9}
      Let the assumptions in Theorem  \ref{th:uniquenessonefm1} or Theorem \ref{th:uniquenessonefm2}
      hold and let condition \eqref{eq:uniqviaonefm} be satisfied. Then $r_{\mathcal T} = R$ and 
      the CPD of tensor $\mathcal T$ is unique.
      \end{theorem}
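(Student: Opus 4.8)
The plan is to obtain Theorem~\ref{theorem2.9} by simply chaining the two ingredients it names: the uniqueness of the third factor matrix delivered by Theorem~\ref{th:uniquenessonefm1} or Theorem~\ref{th:uniquenessonefm2}, and the upgrade from ``one factor matrix is unique'' to ``the whole CPD is unique'' delivered by \cite[Proposition~1.20]{PartII} once condition~\eqref{eq:uniqviaonefm} holds. Because the hypotheses of Theorem~\ref{th:uniquenessonefm1}/\ref{th:uniquenessonefm2} already give $r_{\mathcal T}=R$, the rank assertion is immediate and the only thing left to prove is essential uniqueness.

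First I would fix an arbitrary CPD $\mathcal T=[\bar{\mathbf A},\bar{\mathbf B},\bar{\mathbf C}]_R$; such a decomposition exists precisely because $r_{\mathcal T}=R$. By Theorem~\ref{th:uniquenessonefm1} or Theorem~\ref{th:uniquenessonefm2} the third factor matrix of $\mathcal T$ is unique, so $\bar{\mathbf C}=\mathbf C\mathbf\Pi\mathbf\Lambda$ for some permutation matrix $\mathbf\Pi$ and nonsingular diagonal matrix $\mathbf\Lambda$. I would then absorb $\mathbf\Pi$ and $\mathbf\Lambda$ into the factors: since permuting the rank-one terms and rescaling the three vectors within each term by scalars whose product equals one leaves the polyadic decomposition unchanged, $[\bar{\mathbf A},\bar{\mathbf B},\bar{\mathbf C}]_R$ coincides with a decomposition $[\bar{\mathbf A}',\bar{\mathbf B}',\mathbf C]_R$ that literally shares its third factor matrix with the original $[\mathbf A,\mathbf B,\mathbf C]_R$.

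Finally I would invoke \cite[Proposition~1.20]{PartII} on the pair $[\mathbf A,\mathbf B,\mathbf C]_R$ and $[\bar{\mathbf A}',\bar{\mathbf B}',\mathbf C]_R$, which now share $\mathbf C$. Condition~\eqref{eq:uniqviaonefm} involves only $k_{\mathbf A}$, $k_{\mathbf B}$, and $k_{\mathbf C}$ and is assumed to hold, so the proposition forces the two decompositions to consist of the same rank-one terms; hence $[\bar{\mathbf A}',\bar{\mathbf B}',\mathbf C]_R$, and therefore the arbitrary $[\bar{\mathbf A},\bar{\mathbf B},\bar{\mathbf C}]_R$ from which it was obtained, agrees with $[\mathbf A,\mathbf B,\mathbf C]_R$ up to the trivial permutation and scaling indeterminacies, giving uniqueness of the CPD. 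The argument has no deep obstacle---it is essentially a combination of already-established facts---and the only step that requires genuine care is the reduction in the second paragraph: one must verify that the permutation and scaling relating $\bar{\mathbf C}$ to $\mathbf C$ can indeed be absorbed into the remaining factors so that \cite[Proposition~1.20]{PartII} applies verbatim to a pair of decompositions with a common third factor, noting that the $k$-rank quantities entering \eqref{eq:uniqviaonefm} are invariant under exactly these trivial indeterminacies.
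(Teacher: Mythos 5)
Your proposal is correct and follows exactly the route the paper takes: the paper's own justification of Theorem~\ref{theorem2.9} is the one-line remark preceding it, namely that Theorems~\ref{th:uniquenessonefm1}--\ref{th:uniquenessonefm2} give $r_{\mathcal T}=R$ and uniqueness of the third factor matrix, after which \cite[Proposition~1.20]{PartII} under condition~\eqref{eq:uniqviaonefm} upgrades this to uniqueness of the full CPD. Your second paragraph merely makes explicit the (routine) absorption of the permutation and scaling so that the two decompositions literally share $\mathbf C$, which the paper leaves implicit.
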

 \subsection{Algebraic algorithm for CPD}\label{subsection2.4.2}
 We have the following result on algebraic computation.
  \begin{theorem}\label{th:uniquenessandalgorithm}
    Let $\mathcal T=(t_{ijk})_{i,j,k=1}^{I,J,K}=[\mathbf A,\mathbf B,\mathbf C]_R$, $r_{\mathbf C}=K\leq R$, $m=R-K+2$, and $l\geq 0$.
    Let also  the matrices $\mathbf \Phi_{m,l}(\mathbf A,\mathbf B)$ and 
    $\mathbf S_{m+l}(\mathbf C)$
    be defined as in Definition \ref{def:matrixPhi} and Definition \ref{def:matrixS}, respectively.
    Let $\mathbf U_m$ be a matrix such that its columns  form a basis for
    $\textup{range}(\mathbf S_{m+l}(\mathbf C)^T)$.
   Assume that
   \begin{itemize}
   \item[\textup{(i)}] $k_{\mathbf C}=K$; and
   \item[\textup{(ii)}] $\mathbf A\odot\mathbf B$ has full column rank; and
   \item[\textup{(iii)}]  the matrix $\mathbf \Phi_{m,l}(\mathbf A,\mathbf B)\mathbf U_m$ has full column rank.
   \end{itemize}
   Then $r_{\mathcal T} = R$, the CPD  of $\mathcal T$ is unique and can be found algebraically. 
   \end{theorem}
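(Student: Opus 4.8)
The plan is to derive Theorem \ref{th:uniquenessandalgorithm} from the master identity \eqref{eq:mainidentity}, treating the algorithmic claim as the substantive part and obtaining the two uniqueness claims as consequences of the earlier results once their hypotheses have been checked. Throughout I set $m=R-K+2$ and abbreviate $\mathbf\Phi:=\mathbf\Phi_{m,l}(\mathbf A,\mathbf B)$ and $\mathbf S:=\mathbf S_{m+l}(\mathbf C)$, so that \eqref{eq:mainidentity} reads $\mathbf R_{m,l}(\mathcal T)=\mathbf\Phi\mathbf S^T$.

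First I would carry out a \emph{range identification} step. Write $\mathbf S^T=\mathbf U_m\mathbf G$, where $\mathbf U_m$ is the prescribed basis of $\textup{range}(\mathbf S^T)$; then $\mathbf G$ has full row rank $r:=r_{\mathbf S}$ and $\textup{range}(\mathbf G^T)=\textup{range}(\mathbf S)$. Consequently $\mathbf R_{m,l}(\mathcal T)=(\mathbf\Phi\mathbf U_m)\mathbf G$, and since $\mathbf\Phi\mathbf U_m$ has full column rank by assumption \textup{(iii)}, left multiplication by it preserves the row space, so that
\begin{equation*}
\textup{range}\bigl(\mathbf R_{m,l}(\mathcal T)^T\bigr)=\textup{range}(\mathbf G^T)=\textup{range}(\mathbf S_{m+l}(\mathbf C)).
\end{equation*}
The left-hand side is computable from the data $\mathcal T$ alone, so this identifies the intrinsic subspace $\textup{range}(\mathbf S_{m+l}(\mathbf C))$, equivalently $\ker(\mathbf R_{m,l}(\mathcal T))=\ker(\mathbf S_{m+l}(\mathbf C)^T)$. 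This is the exact analogue of step 2 of Algorithm \ref{alg:fcr}, now performed for $K\le R$ rather than $K=R$.

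The core of the argument, and the step I expect to be the main obstacle, is to recover the columns of $\mathbf C$ (up to the trivial permutation and scaling) from the single subspace $\textup{range}(\mathbf S_{m+l}(\mathbf C))$ produced above. In the full-column-rank case $K=R$ of Theorem \ref{th:maintheoremfcr} this was painless because the relevant subspace was $\textup{range}(\mathbf C^{-T}\odot\dots\odot\mathbf C^{-T})$, which reshapes into an auxiliary tensor all of whose factor matrices have full column rank, so that Theorem \ref{theorem:Harshman} (a GEVD) applies verbatim. When $K<R$ the matrix $\mathbf C$ is no longer invertible and this device is unavailable: the columns of $\mathbf S_{m+l}(\mathbf C)$ are symmetrised tensor products of $(m+l)$ columns of $\mathbf C$ subject to \eqref{eq:mplusltuples} (see Definition \ref{def:matrixS}), and I must disentangle the individual directions $\mathbf c_r$ from this higher-order symmetric structure. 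The plan is again to reduce to Theorem \ref{theorem:Harshman}: from a basis of the identified subspace I would assemble a pencil (or a small family) of matrices whose simultaneous eigenstructure exposes the $\mathbf c_r$, and then use the hypothesis $k_{\mathbf C}=K$ to guarantee that these reduced matrices have full column rank and that the associated generalized eigenvalues are simple, so that the GEVD separates the columns of $\mathbf C$ without ambiguity. It is precisely the maximal $k$-rank assumption \textup{(i)}, rather than the mere $k_{\mathbf C}\ge 1$ of Theorem \ref{th:uniquenessonefm1}, that makes this separation work, and establishing non-defectiveness of the reduced pencil is where the real work lies. Since this construction is driven only by the intrinsic subspace, it simultaneously shows that the third factor matrix is forced by $\mathcal T$.

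With $\mathbf C$ in hand the remaining steps are routine and mirror step 6 of Algorithm \ref{alg:fcr}. Using \eqref{eq:(1.7)} in the form $\mathbf R_{1,0}(\mathcal T)=(\mathbf A\odot\mathbf B)\mathbf C^T$, together with assumption \textup{(ii)} that $\mathbf A\odot\mathbf B$ has full column rank and the fact that its columns are vectorised rank-$1$ matrices, I would recover $\mathbf A\odot\mathbf B$ and then split each column by an SVD to obtain the pairs $(\mathbf a_r,\mathbf b_r)$; this yields an explicit rank-$R$ decomposition, whence $r_{\mathcal T}=R$. For the two uniqueness statements I would not argue from scratch but instead verify that \textup{(i)}--\textup{(iii)} imply the hypotheses of the already-proved results: assumption \textup{(iii)} cannot hold unless the $m$-th compound structure of $\mathbf A$ and $\mathbf B$ is nondegenerate, which forces $\min(k_{\mathbf A},k_{\mathbf B})\ge m-1$, so that Theorem \ref{th:uniquenessonefm2} yields uniqueness of the third factor matrix, while combining the resulting $k$-rank bounds with $k_{\mathbf C}=K$ gives condition \eqref{eq:uniqviaonefm} and hence, by Theorem \ref{theorem2.9}, uniqueness of the whole CPD. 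The delicate point here is the implication from \textup{(iii)} to the $k$-rank bounds on $\mathbf A$ and $\mathbf B$, since without it the passage to Theorems \ref{th:uniquenessonefm2} and \ref{theorem2.9} is not automatic.
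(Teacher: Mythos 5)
Your overall architecture (identify an intrinsic subspace from $\mathbf R_{m,l}(\mathcal T)$ via assumption \textup{(iii)}, reduce the recovery of the third factor matrix to Theorem \ref{theorem:Harshman}, then get $\mathbf A,\mathbf B$ from \eqref{eq:(1.7)}, and obtain uniqueness by feeding $k$-rank bounds into Theorems \ref{th:uniquenessonefm2} and \ref{theorem2.9}) matches the paper's, but the two load-bearing steps are missing. The core gap is the one you yourself flag: you never say how to extract $\mathbf C$ from $\textup{range}(\mathbf S_{m+l}(\mathbf C))$. There is no pencil built from that subspace whose eigenstructure ``exposes the $\mathbf c_r$'' directly. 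The paper's device is different: it works not with the row space but with $\ker(\mathbf R_{m,l}(\mathcal T))\bigcap S^{m+l}(\mathbb R^{K^{m+l}})$, and proves (Lemmas \ref{lemma:suppl} and \ref{lemma4.5}, the technical heart of the paper, occupying the appendix) that under \textup{(iii)} this intersection equals $\textup{range}(\mathbf F^{(m+l)})$, where $\mathbf F\in\mathbb R^{K\times C^{K-1}_R}$ is the combinatorial ``inverse'' $\mathcal B(\mathbf C)$ of \eqref{P1}--\eqref{P2}, each of whose columns is orthogonal to exactly $K-1$ columns of $\mathbf C$. The Khatri--Rao power structure $\mathbf F^{(m+l)}=\mathbf F\odot\mathbf F^{(m+l-1)}$, together with $k_{\mathbf F}\geq 2$ and full column rank of $\mathbf F^{(m+l-1)}$ (statement \eqref{eq:beforeFFF}, imported from \cite[Proposition 1.10]{LinkGEVD}), is precisely what makes Theorem \ref{theorem:Harshman} applicable to the reshaped tensor $\mathcal W$; the GEVD then yields $\mathbf F$, not $\mathbf C$, and $\mathbf C$ is recovered in a separate phase via the duality \eqref{P3}--\eqref{P4}. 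Without introducing $\mathbf F$ and proving the dimension count $\dim(\ker(\mathbf S_{m+l}(\mathbf C)^T)\cap S^{m+l}(\mathbb R^{K^{m+l}}))=C^{K-1}_R$, your plan has no executable content at the decisive step.

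The second gap is quantitative. You claim \textup{(iii)} forces $\min(k_{\mathbf A},k_{\mathbf B})\geq m-1$ and that this, with $k_{\mathbf C}=K$, gives \eqref{eq:uniqviaonefm}. It does not: \eqref{eq:uniqviaonefm} requires $\max(\min(k_{\mathbf A},k_{\mathbf B}-1),\min(k_{\mathbf A}-1,k_{\mathbf B}))+K\geq R+1$, i.e.\ $\min(k_{\mathbf A},k_{\mathbf B})\geq R-K+2=m$; with only $m-1$ you reach $R$, one short. The paper's Lemma \ref{lemma:onemorelemma} proves the needed bound $\min(k_{\mathbf A},k_{\mathbf B})\geq m$, and its proof is not a soft ``nondegeneracy of the compound structure'' remark: it uses assumption \textup{(i)} essentially, constructing from a hypothetical dependence among $m$ columns of $\mathbf A$ a vector $\mathbf x$ with $\mathbf x^T\mathbf c_{m+1}=\dots=\mathbf x^T\mathbf c_R=0$ and a corresponding nonzero $\mathbf f\in\textup{range}(\mathbf S_{m+l}(\mathbf C)^T)$ annihilated by $\mathbf\Phi_{m,l}(\mathbf A,\mathbf B)$, contradicting \textup{(iii)}. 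You would need to supply both the sharper constant and an actual argument for it.
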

   
 The assumptions in Theorem \ref{th:uniquenessandalgorithm} are more restrictive than the assumptions
 in Theorem \ref{theorem2.9} as will be clear from
 Section \ref{proofsnfcr}. Hence,  the statement on rank and uniqueness in Theorem \ref{th:uniquenessandalgorithm}
 follows from Theorem \ref{theorem2.9}.
 To prove the statement on algebraic computation we will explain in Section \ref{proofsnfcr} that Theorem \ref{th:uniquenessandalgorithm} can be reformulated as follows (see Section \ref{subsection:fcr} for the definition of $S^{m+l}(\mathbb R^{K^{m+l}})$).
 \begin{theorem}\label{th:uniquenessandalgorithmrefol}
     Let $\mathcal T=(t_{ijk})_{i,j,k=1}^{I,J,K}=[\mathbf A,\mathbf B,\mathbf C]_R$, $r_{\mathbf C}=K\leq R$, $m=R-K+2$, and $l\geq 0$.
     Let also  the matrix $\mathbf R_{m,l}(\mathcal T)$ be defined as in Definition \ref{def:matrixtensor}.    
     Assume that
    \begin{itemize}
    \item[\textup{(i)}] $k_{\mathbf C}=K$; and
    \item[\textup{(ii)}] $\mathbf A\odot\mathbf B$ has full column rank; and
    \item[\textup{(iii)}]  $\dim  \left(\ker (\mathbf R_{m,l}(\mathcal T))\bigcap S^{m+l}(\mathbb R^{K^{m+l}})\right)=C^{K-1}_R$.
    \end{itemize}
    Then $r_{\mathcal T} = R$, the CPD  of $\mathcal T$ is unique and can be found algebraically. 
    \end{theorem}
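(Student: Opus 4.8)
The plan is to treat Theorem~\ref{th:uniquenessandalgorithmrefol} as a genuine reformulation of Theorem~\ref{th:uniquenessandalgorithm}: the claims $r_{\mathcal T}=R$ and uniqueness are not reproved here but inherited from Theorem~\ref{th:uniquenessandalgorithm} (hence ultimately from Theorem~\ref{theorem2.9}), so the two things that actually need to be established are (a) that, under (i) and (ii), condition (iii) of Theorem~\ref{th:uniquenessandalgorithmrefol} is \emph{equivalent} to condition (iii) of Theorem~\ref{th:uniquenessandalgorithm}, and (b) that the equivalent formulation yields an explicit algebraic recovery scheme generalizing Algorithm~\ref{alg:fcr}. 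The single engine for both is the factorization \eqref{eq:mainidentity}, $\mathbf R_{m,l}(\mathcal T)=\mathbf\Phi_{m,l}(\mathbf A,\mathbf B)\mathbf S_{m+l}(\mathbf C)^T$, which lets me transfer every statement about $\ker(\mathbf R_{m,l}(\mathcal T))$ into separate statements about $\mathbf S_{m+l}(\mathbf C)$ and $\mathbf\Phi_{m,l}(\mathbf A,\mathbf B)$.

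First I would pin down $\ker(\mathbf S_{m+l}(\mathbf C)^T)\cap S^{m+l}(\mathbb R^{K^{m+l}})$. Testing Definition~\ref{def:matrixS} against a rank-$1$ symmetric tensor $\mathbf x\otimes\dots\otimes\mathbf x$ shows that the $(r_1,\dots,r_{m+l})$th coordinate of $\mathbf S_{m+l}(\mathbf C)^T(\mathbf x\otimes\dots\otimes\mathbf x)$ equals $\prod_{p=1}^{m+l}(\mathbf c_{r_p}^T\mathbf x)$. Because the columns of $\mathbf S_{m+l}(\mathbf C)$ are indexed only by multisets \eqref{eq:mplusltuples} with at least $m$ distinct entries, this product vanishes for every admissible multiset if and only if $\mathbf C^T\mathbf x$ has at most $m-1=R-K+1$ nonzero entries, i.e. $\mathbf x$ is orthogonal to at least $K-1$ columns of $\mathbf C$. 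Since $k_{\mathbf C}=K$, orthogonality to any $K-1$ columns singles out a unique direction $\mathbf d_{\mathcal J}$ per $(K-1)$-subset $\mathcal J\subset\{1,\dots,R\}$, while orthogonality to $K$ columns is impossible; this produces exactly $C_R^{K-1}$ rank-$1$ symmetric tensors $\mathbf d_{\mathcal J}\otimes\dots\otimes\mathbf d_{\mathcal J}$, the direct analogue of the columns of $\mathbf C^{-T}$ in \eqref{eq:CCC}. The key lemma I need is that these span the whole intersection and are linearly independent, so that $d_0:=\dim(\ker(\mathbf S_{m+l}(\mathbf C)^T)\cap S^{m+l}(\mathbb R^{K^{m+l}}))=C_R^{K-1}$.

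Granting $d_0=C_R^{K-1}$, the equivalence is a clean dimension count. The map $\mathbf v\mapsto\mathbf S_{m+l}(\mathbf C)^T\mathbf v$ sends $S^{m+l}(\mathbb R^{K^{m+l}})$ onto $\textup{range}(\mathbf S_{m+l}(\mathbf C)^T)=\textup{range}(\mathbf U_m)$ (the columns of $\mathbf S_{m+l}(\mathbf C)$ being symmetric, the perpendicular part of any $\mathbf v$ is killed) with kernel of dimension $d_0$, and $\ker(\mathbf R_{m,l}(\mathcal T))\cap S^{m+l}(\mathbb R^{K^{m+l}})$ is the preimage of $\ker(\mathbf\Phi_{m,l}(\mathbf A,\mathbf B))\cap\textup{range}(\mathbf U_m)$. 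Rank--nullity gives $\dim(\ker(\mathbf R_{m,l}(\mathcal T))\cap S^{m+l}(\mathbb R^{K^{m+l}}))=d_0+\dim(\ker(\mathbf\Phi_{m,l}(\mathbf A,\mathbf B))\cap\textup{range}(\mathbf U_m))$; since $\mathbf\Phi_{m,l}(\mathbf A,\mathbf B)\mathbf U_m$ has full column rank precisely when the second summand is zero, condition (iii) of Theorem~\ref{th:uniquenessandalgorithm} holds iff this dimension equals $C_R^{K-1}$, which is condition (iii) of Theorem~\ref{th:uniquenessandalgorithmrefol}. For the algebraic part the condition forces $\ker(\mathbf R_{m,l}(\mathcal T))\cap S^{m+l}(\mathbb R^{K^{m+l}})=\textup{range}(\mathbf E\odot\dots\odot\mathbf E)$ with $\mathbf E:=[\,\mathbf d_{\mathcal J}\,]$ of size $K\times C_R^{K-1}$; reshaping a basis $\mathbf W$ into $\mathcal W=[\mathbf E,\ \mathbf E\odot\dots\odot\mathbf E,\ \mathbf M]_{C_R^{K-1}}$ with $\mathbf M$ invertible, in which $\mathbf E\odot\dots\odot\mathbf E$ and $\mathbf M$ have full column rank and $k_{\mathbf E}\geq2$, I recover $\mathbf E$ by one GEVD through Theorem~\ref{theorem:Harshman}, reconstruct $\mathbf C$ from the orthogonality incidences of the $\mathbf d_{\mathcal J}$ (which do not depend on $l$, so this is exactly the step used for $l=0$ in \cite{LinkGEVD}), and finally extract $\mathbf A,\mathbf B$ from $\mathbf R_{1,0}(\mathcal T)$ using that the columns of $\mathbf A\odot\mathbf B$ are vectorized rank-$1$ matrices.

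I expect the main obstacle to be the exact-dimension lemma $d_0=C_R^{K-1}$. The lower bound $d_0\geq C_R^{K-1}$ is easy once one checks, via $k_{\mathbf C}=K$, that distinct $\mathcal J$ give non-collinear $\mathbf d_{\mathcal J}$ (a common direction would be orthogonal to $\geq K$ columns, hence zero). The matching upper bound—that no further symmetric tensors lie in the kernel and that the $C_R^{K-1}$ powers $\mathbf d_{\mathcal J}\otimes\dots\otimes\mathbf d_{\mathcal J}$ are genuinely independent, equivalently that $r_{\mathbf S_{m+l}(\mathbf C)}=C_{K+m+l-1}^{m+l}-C_R^{K-1}$—is where the $k$-rank hypothesis must be exploited in an essential, non-generic way, and is the part I would expect to require the most care. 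A secondary technical point is verifying that $\mathbf E\odot\dots\odot\mathbf E$ attains full column rank so that the auxiliary GEVD is legitimate, which should follow from the same independence statement.
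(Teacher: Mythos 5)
Your overall route is the same as the paper's: rank and uniqueness are inherited from Theorem \ref{th:uniquenessandalgorithm} (hence from Theorem \ref{theorem2.9}); the two forms of condition (iii) are shown equivalent by pushing a rank--nullity dimension count through the factorization \eqref{eq:mainidentity}; the symmetric part of $\ker(\mathbf S_{m+l}(\mathbf C)^T)$ is identified with the span of the $(m+l)$-th Kronecker powers of the $C^{K-1}_R$ directions orthogonal to $(K-1)$-subsets of columns of $\mathbf C$ (your $\mathbf d_{\mathcal J}$ are exactly the columns of the matrix $\mathbf F=\mathcal B(\mathbf C)$ of \eqref{P1}--\eqref{P2}); and the recovery is the reshape-and-GEVD scheme of Algorithm \ref{alg:onefactormatrix}. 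This matches Subsections \ref{Subsectionequiv}--\ref{Subsection:last} and Lemma \ref{lemma:suppl} point for point, and your packaging of the equivalence as ``preimage of $\ker(\mathbf\Phi_{m,l}(\mathbf A,\mathbf B))\cap\textup{range}(\mathbf U_m)$'' is a slightly cleaner rephrasing of \eqref{eq:4.9}.

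The genuine gap is that the statement you yourself single out as the crux --- $\dim\left(\ker(\mathbf S_{m+l}(\mathbf C)^T)\cap S^{m+l}(\mathbb R^{K^{m+l}})\right)=C^{K-1}_R$, equivalently $r_{\mathbf S_{m+l}(\mathbf C)}=C^{m+l}_{R+l+1}-C^{m-1}_R$ --- is asserted but not proved, and it is not a routine verification. The paper devotes Subsection \ref{App1} (Lemma \ref{lemma5.1}) to exactly this: a multi-stage induction over the stratification of the admissible multisets by the number $p$ of repeated indices, where at each stage one takes $\mathbf x$ in the span of selected columns of $\widetilde{\mathbf C}^{-T}$, uses $k_{\mathbf C}=K$ to guarantee the nonvanishing of the products $(\mathbf x^T\mathbf c_j)$, and invokes a polynomial-identity argument in the free parameters $t_1,\dots,t_{2+l-p}$ to force the coefficients to vanish one stratum at a time. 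Nothing in your sketch supplies a substitute for this. A second, smaller gap: for the lower bound $d_0\geq C^{K-1}_R$ you need linear independence of the powers $\mathbf d_{\mathcal J}\otimes\dots\otimes\mathbf d_{\mathcal J}$, i.e.\ full column rank of $\mathbf F^{(m+l)}$, and this does not follow from pairwise non-collinearity of the $\mathbf d_{\mathcal J}$ alone (consider more than $d+1$ pairwise non-collinear vectors in $\mathbb R^2$ and their $d$-th powers); the paper imports this fact from \cite[Proposition 1.10]{LinkGEVD}.
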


    Note that if $k_{\mathbf C}=K$, then  by  \eqref{eq:mainidentity} and Lemma \ref{lemma:suppl} \textup{(i)} below,
    \begin{equation}\label{equation:2.19}
          \begin{split}
    &\dim  \left(\ker (\mathbf R_{m,l}(\mathcal T))\bigcap S^{m+l}(\mathbb R^{K^{m+l}})\right)=\\
    &\dim  \left(\ker ({\mathbf\Phi}_{m,l}(\mathbf A,\mathbf B)\mathbf S_{m+l}(\mathbf C)^T)\bigcap S^{m+l}(\mathbb R^{K^{m+l}})\right)\geq \\
    &\dim  \left(\ker (\mathbf S_{m+l}(\mathbf C)^T)\bigcap S^{m+l}(\mathbb R^{K^{m+l}})\right)=C^{K-1}_R.
     \end{split}
     \end{equation}
    Thus, assumption \textup{(iii)} of Theorem \ref{th:uniquenessandalgorithmrefol} means that we require the subspace to have the minimal possible dimension. That is, we suppose that the factor matrices $\mathbf A$, $\mathbf B$, and $\mathbf C$ are such that the multiplication by ${\mathbf\Phi}_{m,l}(\mathbf A,\mathbf B)$ in \eqref{eq:mainidentity} does not increase the overlap  between
     $\ker (\mathbf S_{m+l}(\mathbf C)^T)$ and $S^{m+l}(\mathbb R^{K^{m+l}})$. In other words, we suppose that the multiplication by
     ${\mathbf\Phi}_{m,l}(\mathbf A,\mathbf B)$ does not cause additional vectorized $K\times\dots\times K$ symmetric tensors of order $m+l$
     to be part of the null space of $\mathbf R_{m,l}(\mathcal T)$. This is key to the derivation. By the assumption, as we will explain further in this section, the only vectorized symmetric tensors in the null space of $\mathbf R_{m,l}(\mathcal T)$ admit a direct connection with the factor matrix $\mathbf C$, from which $\mathbf C$ may be retrieved. On the other hand, the null space of $\mathbf R_{m,l}(\mathcal T)$
     can obviously be computed from the given tensor $\mathcal T$.

      The algebraic procedure based on Theorem \ref{th:uniquenessandalgorithmrefol} consists of three phases and is summarized in  Algorithm \ref{alg:onefactormatrix}.      
     In  the first phase  we find  the $K\times C^{K-1}_R$ matrix
    $\mathbf F$ such that
    \begin{gather}
     \textup{every column of}\ \mathbf F \ \text{is orthogonal to exactly} \ K-1\ \text{columns of }\ \mathbf C\ \text{and}
     \label{P1}\\
     \begin{split}
     \text{any vector that is orthogonal to exactly } K-1 \text{ columns of } \mathbf C \\
         \qquad\qquad\qquad\text{ is proportional to a column of } \mathbf F.
               \end{split}\label{P2}
        \end{gather}
        \begin{algorithm}[!h]
        \caption{(Computation of CPD, $K\leq R$ (see Theorem \ref{th:uniquenessandalgorithmrefol}))}
        \label{alg:onefactormatrix}
        \begin{algorithmic}[1]
        \Require
        $\mathcal T\in\mathbb R^{I\times J\times K}$ and $l\geq 0$  with the property that  there exist $\mathbf A\in\mathbb R^{I\times R}$, $\mathbf B\in\mathbb R^{J\times R}$, and $\mathbf C\in\mathbb R^{K\times R}$ such that  $\mathcal T=[\mathbf A,\mathbf B,\mathbf C]_R$ and assumptions \textup{(i)}--\textup{(iii)} in Theorem \ref{th:uniquenessandalgorithmrefol} hold.
        \Ensure Matrices $\mathbf A\in\mathbb R^{I\times R}$, $\mathbf B\in\mathbb R^{J\times R}$ and $\mathbf C\in\mathbb R^{R\times R}$ such that $\mathcal T=[\mathbf A,\mathbf B,\mathbf C]_R$
        \Algphase{{\bf Phase 1:} Find the matrix $\mathbf F\in\mathbb R^{K\times C^{K-1}_R}$ such that $\mathbf F$ coincides with $\mathcal B(\mathbf C)$ up to (unknown) column permutation and  scaling}
        \State  Construct  the $I^{m+l}J^{m+l}\times K^{m+l}$ matrix
         $\mathbf R_{m,l}(\mathcal T)$ by Definition \ref{def:matrixtensor}. 
        \State Find $\mathbf w_1,\dots,\mathbf w_{C^{K-1}_R}$ that form a basis  of $\ker (\mathbf R_{m,l}(\mathcal T))\bigcap S^{m+l}(\mathbb R^{K^{m+l}})$
        \State  $\mathbf W\leftarrow [\mathbf w_1\ \dots\ \mathbf w_{C^{K-1}_R}]$
        \State Reshape the $K^{m+l}\times C^{K-1}_R$ matrix $\mathbf W$ into an $K\times K^{m+l-1}\times C^{K-1}_R$ tensor $\mathcal W$
         \State Compute the CPD
         \Statex \qquad $\mathcal W=[\mathbf F, \mathbf F\odot\dots\odot\mathbf F,\mathbf M]_{C^{K-1}_R}$\quad  ({\em $\mathbf M$ is a by-product})\ \ \ \ \ \ \ \ \ \ \qquad (GEVD)
        \Algphase{{\bf Phase 2 and Phase 3  (can be taken verbatim from \cite[Algorithms 1,2]{LinkGEVD})}}
        \end{algorithmic}
        \end{algorithm}
       Since $k_{\mathbf C}=K$ any $K-1$ columns  of $\mathbf C$ define a unique  column of $\mathbf F$ (up to scaling). Thus, \eqref{P1}--\eqref{P2} define the matrix  $\mathbf F$ up to column permutation and scaling. 
       A special representation of $\mathbf F$ (called $\mathcal B(\mathbf C)$) was studied in \cite{LinkGEVD}. It was shown in \cite{LinkGEVD} that  the matrix $\mathbf F$ can be considered as an unconventional variant of the inverse of $\mathbf C$:
    \begin{align}
     &\text{ every column of } \mathbf C \text{ is orthogonal to exactly } C^{K-2}_{R-1} \text{ columns of } \mathbf F,\label{P3}\\
    &\begin{multlined}
    \text{any vector that is orthogonal to exactly } C^{K-2}_{R-1} \text{ columns of } \mathbf F\\
    \text{ is proportional to a column of } \mathbf C.
     \end{multlined}\label{P4}
    \end{align}
    (Note that, since $k_{\mathbf C}=K$, multiplication by the Moore--Penrose pseudo-inverse $\mathbf C^{\dagger}$ yields $\mathbf C\mathbf C^{\dagger}=\mathbf I_K$. In contrast, for $\mathbf F$ we consider the product $\mathbf F\mathbf C$.)
     It can be shown (see Lemma \ref{lemma4.5}) that under the assumptions in Theorems \ref{th:uniquenessandalgorithm}--\ref{th:uniquenessandalgorithmrefol}:
        \begin{gather}
        k_{\mathbf F}\geq 2,\qquad \text{the matrix }\mathbf F^{(m+l-1)}\ \text{has full column rank and}\ \label{eq:beforeFFF}
        \\
        \label{eq:FFF}
        \ker( \mathbf R_{m,l}(\mathcal T))\bigcap S^{m+l}(\mathbb R^{K^{m+l}}) = \textup{range}\left(\mathbf F^{(m+l)}\right),
         \end{gather}
where
\begin{equation}       
\label{eq:Fm}  
        \mathbf F^{(m+l-1)}:=\underbrace{\mathbf F\odot\dots\odot\mathbf F}_{m+l-1},\qquad \mathbf F^{(m+l)}:=\underbrace{\mathbf F\odot\dots\odot\mathbf F}_{m+l}.
\end{equation}        
 If $K=R$ (as in Subsection \ref{subsection:fcr}), then $m=R-K+2=2$, \eqref{eq:FFF} coincides with \eqref{eq:CCC} ($\mathbf F$ coincides with $\mathbf C^{-T}$
 up to column permutation and scaling), and  
   the first phase of Algorithm \ref{alg:onefactormatrix} coincides with steps 1--5 of
          Algorithm \ref{alg:fcr}. For $K<R$ (implying $m>2$) we work as follows.
From Definition \ref{def:matrixtensor} it follows that the  rows of the matrix 
 $\mathbf R_{m,l}(\mathcal T)$ are vectorized versions of   $K\times\dots\times K$  symmetric tensors of order $m+l$. Thus,  in step 2,
  we find the vectors $\mathbf w_1,\dots,\mathbf w_{C^{K-1}_R}$  that form a basis  of the orthogonal complement to $\textup{range}( \mathbf R_{m,l}(\mathcal T)^T)$ in the space $S^{m+l}(\mathbb R^{K^{m+l}})$ (the existence of such a basis
  follows from assumption \textup{(iii)} of Theorem \ref{th:uniquenessandalgorithmrefol}).
   By \eqref{eq:FFF},
   there exists a unique nonsingular $C^{K-1}_R\times C^{K-1}_R$ matrix $\mathbf M$ such that
 \begin{equation}
     \label{eq:WeqFFFM}
     \mathbf W = \mathbf F^{(m+l)}\mathbf M^T.
      \end{equation}
  In step 4, we construct the tensor $\mathcal W$ whose  vectorized frontal slices are the vectors $\mathbf w_1,\dots,\mathbf w_{C^{K-1}_R}$. 
  Reshaping both sides of \eqref{eq:WeqFFFM}  we obtain the CPD $\mathcal W=[\mathbf F, \mathbf F^{(m+l-1)},\mathbf M]_R$ in which the matrices $\mathbf F^{(m+l-1)}$ and $\mathbf M$ have full column rank and $k_{\mathbf F}\geq 2$.
  By  Theorem \ref{theorem:Harshman}, the CPD of $\mathcal W$ can be computed by means of GEVD.
  
  In the second and third phase we use $\mathbf F$ to find $\mathbf A$, $\mathbf B$, $\mathbf C$.
  There are two ways to do this. The first way is to  find $\mathbf C$ from $\mathbf F$ by \eqref{P3}--\eqref{P4} and then to recover $\mathbf A$ and $\mathbf B$ from $\mathcal T$ and $\mathbf C$. The second way is to find $\mathbf A$ and $\mathbf B$ from $\mathcal T$ and $\mathbf F$ and then to recover $\mathbf C$. 
  The second and third phase were thoroughly discussed in \cite{LinkGEVD} and can be taken verbatim from  \cite[Algorithms 1 and 2]{LinkGEVD}.
\begin{example}\label{Example2.12}
Table \ref{table3} contains some examples of CPDs which can be computed by Algorithm 
\ref{alg:onefactormatrix} and cannot be computed by algorithms from \cite{LinkGEVD}. 
The tensors were generated by a PD $[\mathbf A,\mathbf B,\mathbf C]_R$ in which the  entries of
$\mathbf A$, $\mathbf B$, and $\mathbf C$ are independently drawn from the standard normal distribution $N(0,1)$.
We try different values $l=0,1,\dots,$ until condition \textup{(iii)} in Theorem \ref{th:uniquenessandalgorithmrefol} is met (assuming that this will be the case for some $l\geq 0$). 
In our implementation,  we retrieved  the vectors $\mathbf w_1,\dots,\mathbf w_{C^{K-1}_R}$ from the $C^{K-1}_R$-dimensional null space of  a
$C_{R+l+1}^{m+l}\times C_{R+l+1}^{m+l}$ positive  semi-definite  matrix $\mathbf Q$. The storage of $\mathbf Q$ is
the main bottleneck in our implementation.
To give some insight in the  complexity of the algorithm we included the computational time (averaged over 100 random tensors)
and the size of $\mathbf Q$ in the table. 

Uniqueness of the CPDs follows  from Theorem \ref{th:uniquenessandalgorithmrefol}.
By comparison, the results of \cite{PartII} guarantee uniqueness only for rows 1--4 (see \cite[Table 3.1]{PartII}).

\begin{table}[!h]
   \begin{center}
 \caption{ Upper bounds on $R$ under which the CPD of a generic $I\times J\times K$ tensor
 can be computed by Algorithm \ref{alg:onefactormatrix} (see Example \ref{Example2.12} details).}
  \begin{tabular}{|c|c|c|c|c|c|}
 \hline
 dimensions of $\mathcal T$&    $R$&  $m$&  $l$& $C_{R+l+1}^{m+l}$ &  computational time (sec)   \\
 \hline
  $4\times 5\times 6$       & 7  & 3 & 1 & 126& 0.182 	\\
  $5\times 7\times 7$       & 9  & 4 & 1 & 462& 1.598 	\\
 $6 \times 9 \times 8$     & 11 & 5 & 1 & 1716& 28.616    \\       
 $7 \times 7 \times 7$     & 10 & 5 & 1 & 924& 8.192\\ 
 $4 \times 6 \times 8$     & 9 & 3 & 1 & 330& 0.63 	\\ 
 $4 \times 7 \times 10$     & 11 & 3 & 1 & 715& 2.352 	\\ 
 $5 \times 6 \times 6$     & 8 & 4 & 2 & 462 & 1.256	\\ 
 $5 \times 7 \times 8$     & 10 & 4 & 2 &  1716& 14.552 \\ 
  \hline
 \end{tabular}
 \label{table3}
 \end{center}
 \end{table}
\end{example}
 \section{Proofs related to Sections \ref{subsection:fcr} and \ref{subsectio:notfcr}}\label{proofsnfcr}
    In this section  we 1) prove Theorems \ref{th:uniquenessonefm1} and \ref{th:uniquenessonefm2}; 2)  show that the assumptions in  Theorem \ref{th:uniquenessandalgorithm} are more restrictive than the assumptions
  in Theorem \ref{theorem2.9}, which implies the statement on uniqueness in Theorem \ref{th:uniquenessandalgorithm};
 3) prove that assumption \textup{(iii)} in Theorem \ref{th:uniquenessandalgorithm} is equivalent to assumption \textup{(iii)} in Theorem \ref{th:uniquenessandalgorithmrefol}; 4) prove statements \eqref{eq:beforeFFF}--\eqref{eq:FFF}; 
 5) prove Theorem \ref{th:maintheoremfcr}.
    \subsection{Proofs of  Theorems \ref{th:uniquenessonefm1} and \ref{th:uniquenessonefm2}}\label{subsection6.1}
In the sequel, $\omega(\lambda_1, \dots,\lambda_R)$ denotes the number of nonzero entries of $[\lambda_1\ \dots\ \lambda_R]^T$.
The following  condition \condWm was introduced in \cite{PartI, PartII} in terms of $m$-th compound matrices.
In this paper we will use the following (equivalent)  definition of \condWm.

\begin{definition}\label{Def:Wm}
We say that  condition \condWm  holds for the triplet of matrices $(\mathbf A,\mathbf B,\mathbf C)\in \mathbb R^{I\times R}\times \mathbb R^{J\times R}
\times \mathbb R^{K\times R}$ if $\omega(\lambda_1, \dots,\lambda_R)\leq m-1$ whenever
 \begin{gather}
r_{\mathbf A\Diag{\lambda_1,\dots,\lambda_R}\mathbf B^T}\leq m-1\quad  \text{ and }\quad [\lambda_1\ \dots\ \lambda_R]^T\in\textup{range}(\mathbf C^T)\label{eqWm2new}.
\end{gather}
\end{definition}

Since the rank of the product $\mathbf A\Diag{\lambda_1,\dots,\lambda_R}\mathbf B^T$ does not exceed the rank of the factors and
$r_{\Diag{\lambda_1,\dots,\lambda_R}}=\omega(\lambda_1,\dots,\lambda_R)$, we always have the implication
\begin{equation}\label{eqdirectimplication}
\omega(\lambda_1,\dots,\lambda_R)\leq m-1\quad\Rightarrow\quad r_{\mathbf A\Diag{\lambda_1,\dots,\lambda_R}\mathbf B^T}\leq m-1.
\end{equation}
By Definition \ref{Def:Wm}, condition \condWm  holds for the triplet $(\mathbf A,\mathbf B,\mathbf C)$ if and only if  the opposite implication  in \eqref{eqdirectimplication} holds for all $[\lambda_1\ \dots\ \lambda_R]\in\textup{range}(\mathbf C^T)\subset\mathbb R^R$.

The following  results on rank and uniqueness of one factor matrix  have been obtained in \cite{PartI}.
\begin{proposition}\label{prmostgeneraldis} (see \cite[Proposition 4.9]{PartI})
 Let $\mathcal T=(t_{ijk})_{i,j,k=1}^{I,J,K}=[\mathbf A,\mathbf B,\mathbf C]_R$, $r_{\mathbf C}=K\leq R$.
Assume that
\begin{itemize}
\item[(i)] $k_{\mathbf C}\geq 1$;
\item[(ii)] $\mathbf A\odot\mathbf B$ has full column rank;
\item[(iii)]
conditions $\text{\textup{(W{\scriptsize m})}},\dots,\text{\textup{(W{\scriptsize 1})}}$ hold for the triplet of matrices $(\mathbf A,\mathbf B,\mathbf C)$.
\end{itemize}
Then $r_{\mathcal T}=R$ and the third factor matrix  of $\mathcal T$  is unique.
\end{proposition}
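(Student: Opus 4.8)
The plan is to translate the tensor statement into a purely matrix-theoretic statement about $\mathbf C$ and then to invoke Kruskal's permutation lemma. The starting observation is the contraction dictionary: for every $\mathbf y\in\mathbb R^K$ the $I\times J$ matrix $\mathcal T(\mathbf y)$ with entries $(\mathcal T(\mathbf y))_{ij}=\sum_k t_{ijk}\,y_k$ equals $\mathbf A\,\Diag{\mathbf C^T\mathbf y}\,\mathbf B^T$, so its coefficient vector is $\lambda=\mathbf C^T\mathbf y\in\textup{range}(\mathbf C^T)$ and $\omega(\mathbf C^T\mathbf y)$ counts how many columns $\mathbf c_r$ fail to be orthogonal to $\mathbf y$. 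With this dictionary, condition $(\mathrm W j)$ reads: if $r_{\mathcal T(\mathbf y)}\le j-1$ then $\omega(\mathbf C^T\mathbf y)\le j-1$. Combined with the always-valid bound $r_{\mathcal T(\mathbf y)}\le\omega(\mathbf C^T\mathbf y)$ (this is \eqref{eqdirectimplication} applied to $(\mathbf A,\mathbf B,\mathbf C)$), assumption (iii) yields the sharp identity $r_{\mathcal T(\mathbf y)}=\omega(\mathbf C^T\mathbf y)$ whenever either quantity is at most $m-1$.

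Next I would introduce an arbitrary competing decomposition $\mathcal T=[\bar{\mathbf A},\bar{\mathbf B},\bar{\mathbf C}]_{\bar R}$ with $\bar R=r_{\mathcal T}\le R$ and exploit the second description $\mathcal T(\mathbf y)=\bar{\mathbf A}\,\Diag{\bar{\mathbf C}^T\mathbf y}\,\bar{\mathbf B}^T$. From this side only the trivial inequality $r_{\mathcal T(\mathbf y)}\le\omega(\bar{\mathbf C}^T\mathbf y)$ is available. Chaining the two descriptions gives the key implication: if $\omega(\bar{\mathbf C}^T\mathbf y)\le m-1=R-r_{\mathbf C}+1$, then $r_{\mathcal T(\mathbf y)}\le m-1$, so the graded conditions of the triplet $(\mathbf A,\mathbf B,\mathbf C)$ apply and deliver $\omega(\mathbf C^T\mathbf y)=r_{\mathcal T(\mathbf y)}\le\omega(\bar{\mathbf C}^T\mathbf y)$. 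Since $R-r_{\mathbf C}+1=m-1$ is exactly the threshold in Kruskal's permutation lemma, and assumption (i) supplies $k_{\mathbf C}\ge1$, this is precisely the hypothesis of that lemma for the pair $(\mathbf C,\bar{\mathbf C})$.

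Applying the permutation lemma then forces each column of $\bar{\mathbf C}$ to be proportional to a column of $\mathbf C$; padding $\bar{\mathbf C}$ with zero columns to width $R$ when $\bar R<R$ (which leaves every $\omega(\bar{\mathbf C}^T\mathbf y)$ unchanged), the lemma gives $\bar{\mathbf C}=\mathbf C\Pi\Lambda$ for a permutation $\Pi$ and a nonsingular diagonal $\Lambda$. Because $k_{\mathbf C}\ge1$ forbids zero columns in $\mathbf C$, no padded column can survive, hence $\bar R=R$. This yields both conclusions simultaneously: $r_{\mathcal T}=\bar R=R$ and the third factor matrix is unique up to column permutation and scaling. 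Assumption (ii) enters to keep this count honest: full column rank of $\mathbf A\odot\mathbf B$ prevents the first two modes from silently absorbing a rank drop, guaranteeing that $\mathbf R_{1,0}(\mathcal T)=(\mathbf A\odot\mathbf B)\mathbf C^T$ has rank exactly $K$ and that the rank-to-$\omega$ transfer above reflects the genuine tensor rank rather than an accidental degeneracy.

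The step I expect to be the main obstacle is the uniform verification of the permutation-lemma hypothesis across all relevant $\mathbf y$: rank information must be shuttled between the two decompositions using only the one-sided trivial bound on the barred side and the full graded family $(\mathrm W1),\dots,(\mathrm Wm)$ on the unbarred side, with careful attention that the threshold $m-1=R-r_{\mathbf C}+1$ matches the lemma exactly and that the equivalence $r_{\mathcal T(\mathbf y)}=\omega(\mathbf C^T\mathbf y)$ is invoked only in its valid range. Securing the permutation lemma in the precise form needed here (permitting $\bar R\le R$ and concluding $\bar R=R$) is the other delicate point; once it is in place, the remainder is routine bookkeeping with the contraction identity.
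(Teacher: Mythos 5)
You should first note that the paper itself does not prove this proposition: it is imported verbatim from \cite[Proposition 4.9]{PartI} and used as a black box (Theorems \ref{th:uniquenessonefm1} and \ref{th:uniquenessonefm2} follow by combining it with Lemma \ref{some:lemma}). So there is no in-paper proof to compare against; the comparison has to be with the proof in the cited reference, whose strategy your argument essentially reproduces: contract along the third mode, use the graded family $(\mathrm{W}1),\dots,(\mathrm{W}m)$ together with the one-sided bound $r_{\mathcal T(\mathbf y)}\leq\omega(\bar{\mathbf C}^T\mathbf y)$ coming from the competing decomposition, and feed the resulting inequality $\omega(\mathbf C^T\mathbf y)\leq\omega(\bar{\mathbf C}^T\mathbf y)$, valid for all $\mathbf y$ with $\omega(\bar{\mathbf C}^T\mathbf y)\leq m-1=R-r_{\mathbf C}+1$, into Kruskal's permutation lemma. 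Your bookkeeping of the threshold and of the range in which the identity $r_{\mathcal T(\mathbf y)}=\omega(\mathbf C^T\mathbf y)$ may be invoked is correct.

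The genuine gap is the passage through the permutation lemma when $\bar R<R$. The standard form of the lemma takes two $K\times R$ matrices and requires the matrix that appears on the right-hand side of the $\omega$-inequality and in the threshold condition --- here the \emph{padded} $\bar{\mathbf C}$ --- to have no zero columns; $k_{\mathbf C}\geq1$ is not the hypothesis the lemma asks for. Padding $\bar{\mathbf C}$ with $R-\bar R$ zero columns therefore destroys the very hypothesis you need, and the step ``no padded column can survive, hence $\bar R=R$'' is circular: you use the conclusion of a lemma whose hypotheses you have not verified in order to exclude exactly the configuration that violates those hypotheses. To close this you must either (a) prove $r_{\mathcal T}=R$ by a separate argument \emph{before} invoking the lemma --- this is where assumption (ii) really earns its keep, jointly with $k_{\mathbf C}\geq1$ and the $(\mathrm{W}k)$ conditions, and it is a nontrivial step in \cite{PartI} --- or (b) state and verify a version of the permutation lemma that explicitly admits $\bar R\leq R$ and concludes $\bar R=R$. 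Your closing paragraph correctly flags this as a delicate point, but flagging it is not resolving it; as written, the rank statement (and hence the uniqueness statement, which presupposes that the competing CPD has $R$ nonzero terms) is not established. Relatedly, your account of the role of assumption (ii) (``prevents the first two modes from silently absorbing a rank drop'') is the right intuition but not an argument: the one concrete fact you extract from it, $r_{\mathbf R_{1,0}(\mathcal T)}=r_{\mathbf C}=K$, yields only $r_{\mathcal T}\geq K$, not $r_{\mathcal T}\geq R$.
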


\begin{proposition}\label{prmostgeneraldis2}(see \cite[Corollary 4.10]{PartI})
 Let $\mathcal T=(t_{ijk})_{i,j,k=1}^{I,J,K}=[\mathbf A,\mathbf B,\mathbf C]_R$, $r_{\mathbf C}=K\leq R$.
Assume that
\begin{itemize}
\item[(i)] $k_{\mathbf C}\geq 1$;
\item[(ii)] $\mathbf A\odot\mathbf B$ has full column rank;
\item[(iii)] $\min (k_{\mathbf A},k_{\mathbf B})\geq m-1$;
\item[(iv)]
condition $\text{\textup{(W{\scriptsize m})}}$ holds  for the triplet of matrices $(\mathbf A,\mathbf B,\mathbf C)$.
\end{itemize}
Then $r_{\mathcal T}=R$ and the third factor matrix of $\mathcal T$  is unique.
\end{proposition}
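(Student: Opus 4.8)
The plan is to derive Proposition~\ref{prmostgeneraldis2} from Proposition~\ref{prmostgeneraldis}. The two statements share hypotheses (i) and (ii), so it suffices to show that, under the additional $k$-rank hypothesis (iii) $\min(k_{\mathbf A},k_{\mathbf B})\geq m-1$, the single condition \condWm assumed in (iv) already forces conditions $(\mathrm W_1),\dots,(\mathrm W_{m-1})$ to hold for the triplet $(\mathbf A,\mathbf B,\mathbf C)$ as well. Once all of $(\mathrm W_m),\dots,(\mathrm W_1)$ are available, Proposition~\ref{prmostgeneraldis} immediately yields $r_{\mathcal T}=R$ and uniqueness of the third factor matrix.

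The key observation is a sharpening of the trivial implication \eqref{eqdirectimplication}. I claim that whenever $\omega(\lambda_1,\dots,\lambda_R)\leq\min(k_{\mathbf A},k_{\mathbf B})$ one in fact has the exact equality $r_{\mathbf A\Diag{\lambda_1,\dots,\lambda_R}\mathbf B^T}=\omega(\lambda_1,\dots,\lambda_R)$. To see this, let $S\subseteq\{1,\dots,R\}$ be the support of $[\lambda_1\ \dots\ \lambda_R]^T$, so that $\mathbf A\Diag{\lambda_1,\dots,\lambda_R}\mathbf B^T=\mathbf A_S\,\mathbf D_S\,\mathbf B_S^T$, where $\mathbf A_S$ and $\mathbf B_S$ collect the columns indexed by $S$ and $\mathbf D_S$ is the diagonal matrix of the nonzero $\lambda$'s, which is invertible. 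Since $|S|=\omega(\lambda_1,\dots,\lambda_R)\leq k_{\mathbf A}$ and $|S|\leq k_{\mathbf B}$, the definition of $k$-rank guarantees that $\mathbf A_S$ and $\mathbf B_S$ both have full column rank; hence $\mathbf A_S\mathbf D_S\mathbf B_S^T$ is a product of a full-column-rank, an invertible, and a full-row-rank matrix with common inner dimension $|S|$, and therefore has rank exactly $|S|=\omega(\lambda_1,\dots,\lambda_R)$.

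With this identity in hand the reduction is routine. Fix $k\in\{1,\dots,m-1\}$ and take any $[\lambda_1\ \dots\ \lambda_R]^T\in\textup{range}(\mathbf C^T)$ with $r_{\mathbf A\Diag{\lambda_1,\dots,\lambda_R}\mathbf B^T}\leq k-1$. Since $k-1\leq m-1$, condition \condWm applies and, using (iii), gives $\omega(\lambda_1,\dots,\lambda_R)\leq m-1\leq\min(k_{\mathbf A},k_{\mathbf B})$. The exact-rank identity above then turns the hypothesis $r_{\mathbf A\Diag{\lambda_1,\dots,\lambda_R}\mathbf B^T}\leq k-1$ into $\omega(\lambda_1,\dots,\lambda_R)\leq k-1$, which is precisely condition $(\mathrm W_k)$. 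As $k$ ranges over $1,\dots,m-1$ this establishes all of the missing conditions and completes the reduction to Proposition~\ref{prmostgeneraldis}.

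I expect the whole argument to be short, so there is no serious obstacle; the only genuinely substantive step is the exact-rank identity of the second paragraph. The subtle point to get right is the logical ordering: the $k$-rank bound in (iii) must be compared with $\omega(\lambda_1,\dots,\lambda_R)$, which is first supplied by \condWm, rather than with $m-1$ directly, so that the reduction to the invertible block on the support $S$ is legitimate. The edge case $k=1$, where $r_{\mathbf A\Diag{\lambda_1,\dots,\lambda_R}\mathbf B^T}\leq 0$ forces $\mathbf A\Diag{\lambda_1,\dots,\lambda_R}\mathbf B^T=\mathbf O$ and hence $\omega(\lambda_1,\dots,\lambda_R)=0$, is covered by the same chain of reasoning without modification.
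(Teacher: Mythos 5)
Your proof is correct. Note that the paper itself does not prove this proposition: it is imported verbatim from \cite[Corollary 4.10]{PartI}, so there is no in-paper argument to compare against. Your reduction --- showing that when $\omega(\lambda_1,\dots,\lambda_R)\leq\min(k_{\mathbf A},k_{\mathbf B})$ the trivial inequality \eqref{eqdirectimplication} becomes the equality $r_{\mathbf A\Diag{\lambda_1,\dots,\lambda_R}\mathbf B^T}=\omega(\lambda_1,\dots,\lambda_R)$, so that $\text{\textup{(W{\scriptsize m})}}$ together with (iii) forces $\text{\textup{(W{\scriptsize k})}}$ for all $k\leq m$ and Proposition \ref{prmostgeneraldis} applies --- is exactly the standard route by which the cited corollary is derived from the cited proposition in \cite{PartI}, and the full-column-rank/invertible/full-row-rank factorization on the support $S$ is handled correctly, including the degenerate cases $\omega=0$ and $k=1$.
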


One can easily notice the similarity between the assumptions in Theorems \ref{th:uniquenessonefm1}--\ref{th:uniquenessonefm2}
and the assumptions in Propositions \ref{prmostgeneraldis}--\ref{prmostgeneraldis2}. 
The proofs of Theorems \ref{th:uniquenessonefm1}--\ref{th:uniquenessonefm2}  follow from 
Propositions \ref{prmostgeneraldis}--\ref{prmostgeneraldis2} and the following lemma.
\begin{lemma}\label{some:lemma}
Let $\mathbf A\in\mathbb R^{I\times R}$, $\mathbf B\in\mathbb R^{J\times R}$, and $\mathbf C\in\mathbb R^{K\times R}$, $r_{\mathbf C}=K\leq R$, $k\leq m=R-K+2$, and let $l$
  be a nonnegative integer. Let also
  the matrix  $\mathbf \Phi_{k,l}(\mathbf A,\mathbf B)$
  be defined as in Definition \ref{def:matrixPhi}, the matrix 
  $\mathbf S_{k+l}(\mathbf C)$ be defined as in Definition \ref{def:matrixS}, and
  $\mathbf U$ be a matrix such that its columns  form a basis for $\textup{range}(\mathbf S_{k+l}(\mathbf C)^T)$.
   Assume that
  \begin{equation}
 \text{the matrix } \mathbf \Phi_{k,l}(\mathbf A,\mathbf B)\mathbf U \text{ has full column rank.}
 \label{eq:newWk}
 \end{equation}
 Then condition $\text{\textup{(W{\scriptsize k})}}$ holds  for the triplet of matrices $(\mathbf A,\mathbf B,\mathbf C)$.
\end{lemma}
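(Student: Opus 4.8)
The plan is to verify condition $\text{\textup{(W{\scriptsize k})}}$ directly from Definition \ref{Def:Wm}. So I take an arbitrary vector $\bm\lambda=[\lambda_1\ \dots\ \lambda_R]^T$ satisfying \eqref{eqWm2new} with $m$ replaced by $k$, that is, $\bm\lambda\in\textup{range}(\mathbf C^T)$ and $r_{\mathbf A\Diag{\bm\lambda}\mathbf B^T}\leq k-1$, and I aim to conclude $\omega(\lambda_1,\dots,\lambda_R)\leq k-1$. The idea is to manufacture from $\bm\lambda$ a single vector lying in $\textup{range}(\mathbf U)=\textup{range}(\mathbf S_{k+l}(\mathbf C)^T)$ that is annihilated by $\mathbf\Phi_{k,l}(\mathbf A,\mathbf B)$. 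The assumed full column rank \eqref{eq:newWk} of $\mathbf\Phi_{k,l}(\mathbf A,\mathbf B)\mathbf U$ will then force that vector to vanish, and I will read off from its vanishing exactly the bound $\omega(\lambda_1,\dots,\lambda_R)\leq k-1$.

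To build this vector, I use $\bm\lambda\in\textup{range}(\mathbf C^T)$ to write $\bm\lambda=\mathbf C^T\mathbf x$ with $\mathbf x\in\mathbb R^K$, so that $\lambda_r=\mathbf c_r^T\mathbf x$ for every $r$. I then consider the symmetric tensor $\mathbf x\otimes\dots\otimes\mathbf x$ (with $k+l$ factors) and set $\bm\mu:=\mathbf S_{k+l}(\mathbf C)^T(\mathbf x\otimes\dots\otimes\mathbf x)$. Using Definition \ref{def:matrixS} together with $\langle\mathbf c_{s_1}\otimes\dots\otimes\mathbf c_{s_{k+l}},\ \mathbf x\otimes\dots\otimes\mathbf x\rangle=\prod_p\lambda_{s_p}$, one checks that the $(r_1,\dots,r_{k+l})$th entry of $\bm\mu$ equals $\lambda_{r_1}\cdots\lambda_{r_{k+l}}$. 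By construction $\bm\mu\in\textup{range}(\mathbf S_{k+l}(\mathbf C)^T)=\textup{range}(\mathbf U)$, so $\bm\mu=\mathbf U\mathbf y$ for some $\mathbf y$. Applying identity \eqref{eq:mainidentity} with $m$ replaced by $k$ gives $\mathbf\Phi_{k,l}(\mathbf A,\mathbf B)\bm\mu=\mathbf\Phi_{k,l}(\mathbf A,\mathbf B)\mathbf S_{k+l}(\mathbf C)^T(\mathbf x\otimes\dots\otimes\mathbf x)=\mathbf R_{k,l}(\mathcal T)(\mathbf x\otimes\dots\otimes\mathbf x)$, which reduces everything to evaluating $\mathbf R_{k,l}(\mathcal T)(\mathbf x\otimes\dots\otimes\mathbf x)$.

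The heart of the argument, and the step I expect to require the most care, is this last evaluation. Writing $\mathbf G:=\mathbf A\Diag{\bm\lambda}\mathbf B^T$, so that $g_{ij}=\sum_\kappa t_{ij\kappa}x_\kappa$, I plan to compute the $((\tilde i-1)J^{k+l}+\tilde j)$th entry of $\mathbf R_{k,l}(\mathcal T)(\mathbf x\otimes\dots\otimes\mathbf x)$ from Definition \ref{def:matrixtensor}. Because the weight $x_{\kappa_1}\cdots x_{\kappa_{k+l}}$ is symmetric in the summation indices, the inner permutation sum collapses and the prefactor $\tfrac{1}{k!(k+l)!}$ simplifies to $\tfrac1{k!}$; the resulting sum over $(\kappa_1,\dots,\kappa_{k+l})$ then factors over the first $k$ and the last $l$ indices, and multilinearity of the determinant in its columns (together with $\sum_\kappa t_{ij\kappa}x_\kappa=g_{ij}$) turns it into $\tfrac1{k!}\det[\,g_{i_pj_q}\,]_{p,q=1}^{k}\prod_{p=1}^l g_{i_{k+p}j_{k+p}}$. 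Thus every entry of $\mathbf R_{k,l}(\mathcal T)(\mathbf x\otimes\dots\otimes\mathbf x)$ is a $k\times k$ minor of $\mathbf G$ times a product of entries of $\mathbf G$. Since $r_{\mathbf G}=r_{\mathbf A\Diag{\bm\lambda}\mathbf B^T}\leq k-1$, all $k\times k$ minors of $\mathbf G$ vanish, and therefore $\mathbf\Phi_{k,l}(\mathbf A,\mathbf B)\bm\mu=\mathbf R_{k,l}(\mathcal T)(\mathbf x\otimes\dots\otimes\mathbf x)=\mathbf 0$.

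To finish, I combine $\mathbf\Phi_{k,l}(\mathbf A,\mathbf B)\mathbf U\mathbf y=\mathbf\Phi_{k,l}(\mathbf A,\mathbf B)\bm\mu=\mathbf 0$ with hypothesis \eqref{eq:newWk} to deduce $\mathbf y=\mathbf 0$, whence $\bm\mu=\mathbf 0$. By the formula for the entries of $\bm\mu$, this means $\lambda_{r_1}\cdots\lambda_{r_{k+l}}=0$ for every tuple $(r_1,\dots,r_{k+l})$ with at least $k$ distinct entries. If $k$ of the $\lambda_r$ were nonzero, say those indexed by $t_1<\dots<t_k$, then the tuple obtained by padding $t_1,\dots,t_k$ with $l$ repetitions of $t_k$ would have exactly $k$ distinct entries yet a nonzero product, a contradiction; hence at most $k-1$ of the $\lambda_r$ are nonzero, i.e. $\omega(\lambda_1,\dots,\lambda_R)\leq k-1$. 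This is precisely condition $\text{\textup{(W{\scriptsize k})}}$, completing the proof.
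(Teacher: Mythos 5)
Your proof is correct. The skeleton is the same as the paper's: both arguments write $\bm{\lambda}=\mathbf C^T\mathbf x$, form the vector $\mathbf x\otimes\dots\otimes\mathbf x$, show that $\mathbf \Phi_{k,l}(\mathbf A,\mathbf B)\mathbf S_{k+l}(\mathbf C)^T(\mathbf x\otimes\dots\otimes\mathbf x)=\mathbf 0$, use the full column rank of $\mathbf \Phi_{k,l}(\mathbf A,\mathbf B)\mathbf U$ to conclude that $\mathbf S_{k+l}(\mathbf C)^T(\mathbf x\otimes\dots\otimes\mathbf x)=\mathbf 0$, and then read off $\omega(\lambda_1,\dots,\lambda_R)\leq k-1$ from the vanishing of the products $\lambda_{r_1}\cdots\lambda_{r_{k+l}}$ via the same padding of $k$ distinct indices. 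Where you genuinely diverge is in how the key vanishing is established. The paper factors $\mathbf A\Diag{\lambda_1,\dots,\lambda_R}\mathbf B^T=\tilde{\mathbf A}\tilde{\mathbf B}^T$ with $\max(r_{\tilde{\mathbf A}},r_{\tilde{\mathbf B}})\leq k-1$, regards this as two PDs of an $I\times J\times 1$ tensor, applies \eqref{eq:mainidentity} to each, and invokes the fact that $\mathbf \Phi_{k,l}(\tilde{\mathbf A},\tilde{\mathbf B})=\mzero$ when the factors have rank at most $k-1$. You instead contract $\mathbf R_{k,l}(\mathcal T)$ directly with $\mathbf x\otimes\dots\otimes\mathbf x$ and show, by collapsing the permutation sum (the weight $x_{\kappa_1}\cdots x_{\kappa_{k+l}}$ is symmetric, so the $(k+l)!$ cancels) and using column-multilinearity of the determinant, that every entry equals $\tfrac{1}{k!}$ times a $k\times k$ minor of $\mathbf G=\mathbf A\Diag{\lambda_1,\dots,\lambda_R}\mathbf B^T$ times a product of entries of $\mathbf G$; the hypothesis $r_{\mathbf G}\leq k-1$ then kills every such minor. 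Your route is more self-contained, needing neither the auxiliary low-rank factorization nor the observation that $\mathbf \Phi_{k,l}$ of rank-deficient matrices vanishes, at the price of a somewhat longer direct computation, which you carry out correctly; both devices ultimately encode the same fact, namely that the construction of Definition \ref{def:matrixtensor} applied to the slice contraction $\sum_\kappa t_{ij\kappa}x_\kappa$ produces only $k\times k$ minors of $\mathbf G$.
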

\begin{proof}
Let \eqref{eqWm2new} hold for $m=k$. We need to show that $\omega(\lambda_1, \dots,\lambda_R)\leq k-1$.
Since
$[\lambda_1\ \dots\ \lambda_R]^T\in\textup{range}(\mathbf C^T)$ and $r_{\mathbf C}=K$, there exists a unique vector $\mathbf x\in\mathbb R^K$ such that $[\lambda_1\ \dots\ \lambda_R]=\mathbf x^T\mathbf C$. Hence, we need to show that $\mathbf x$ is orthogonal to at least
$R-k+1$ columns of $\mathbf C$.

By \eqref{eqWm2new}, there exist $\tilde{\mathbf A}\in\mathbb R^{I\times R}$ and $\tilde{\mathbf B}\in\mathbb R^{J\times R}$ such that
\begin{equation}
\mathbf A\Diag{\lambda_1,\dots,\lambda_R}\mathbf B^T = \tilde{\mathbf A}\tilde{\mathbf B}^T\
\label{eq:Wwwmatr}
\end{equation}
and $\max(r_{\tilde{\mathbf A}},r_{\tilde{\mathbf B}})\leq k-1$.
Since $\mathbf a\mathbf b^T\lambda = \mathbf a\outerprod\mathbf b\outerprod\lambda$, we can consider \eqref{eq:Wwwmatr}  as an equality of two PDs of an
$I\times J\times 1$ tensor
\begin{equation*}
\sum\limits_{r=1}^R\mathbf a_r\outerprod\mathbf b_r\outerprod\lambda_r=\sum\limits_{r=1}^R\tilde{\mathbf a}_r\outerprod\tilde{\mathbf b}_r\outerprod 1.
\end{equation*}
Hence, by \eqref{eq:mainidentity},
\begin{equation}
\begin{split}
{\mathbf\Phi}_{k,l}(\mathbf A,\mathbf B)\mathbf S_{k+l}(\mathbf x^T\mathbf C)^T =&\
{\mathbf\Phi}_{k,l}(\mathbf A,\mathbf B)\mathbf S_{k+l}([\lambda_1\ \dots\ \lambda_R])^T =\\
&{\mathbf\Phi}_{k,l}(\tilde{\mathbf A},\tilde{\mathbf B})\mathbf S_{k+l}([1\ \dots\ 1])^T.
\end{split}
\label{eq:Phikkkll}
\end{equation}
Since $\max(r_{\tilde{\mathbf A}},r_{\tilde{\mathbf B}})\leq k-1$, it follows from Definition \ref{def:matrixPhi} that
${\mathbf\Phi}_{k,l}(\tilde{\mathbf A},\tilde{\mathbf B})$ is the zero matrix (cf. explanation at the end of Section \ref{subsection:construction}).
Besides, it easily follows from Definition
\ref{def:matrixS} that
$$
\mathbf S_{k+l}(\mathbf C)^T(\underbrace{\mathbf x\otimes\dots\otimes\mathbf x}_{k+l})=\mathbf S_{k+l}(\mathbf x^T\mathbf C)^T.
$$
Thus, \eqref{eq:Phikkkll} takes the form
$$
{\mathbf\Phi}_{k,l}(\mathbf A,\mathbf B)\mathbf S_{k+l}(\mathbf C)^T(\mathbf x\otimes\dots\otimes\mathbf x)=
{\mathbf\Phi}_{k,l}(\mathbf A,\mathbf B)\mathbf S_{k+l}(\mathbf x^T\mathbf C)^T=\mathbf 0.
$$
Hence, by \eqref{eq:newWk}, the vector $\mathbf x\otimes\dots\otimes\mathbf x$ is orthogonal to the range of
$S_{k+l}(\mathbf C)$. In particular,
\begin{equation*}
\begin{split}
(\mathbf x\otimes\dots\otimes\mathbf x)^T
\sum\limits_{\substack{(s_1,\dots,s_{k+l})\in\\ P_{\{r_1,\dots,r_k,\dots,r_k\}}}}\mathbf c_{s_1}\otimes\dots\otimes\mathbf c_{s_{m+l}}&=\\
(\mathbf x^T\mathbf c_{r_1})\cdots (\mathbf x^T\mathbf c_{r_{k-1}})(\mathbf x^T\mathbf c_{r_k})^{l+1}&=0
\end{split}
\end{equation*}
for all $(k+l)$-tuples $(r_1,\dots,r_k,\dots,r_k)$ such that $1\leq r_1<\dots<r_k\leq R$, yielding that
 $\mathbf x$ is orthogonal to at least
$R-k+1$ columns of $\mathbf C$.
\end{proof}
\subsection{Proof of statement on rank and uniqueness in  Theorem \ref{th:uniquenessandalgorithm}}
In Lemma \ref{lemma:onemorelemma} below we prove that $\min(k_{\mathbf A},k_{\mathbf B})\geq m$. It is clear that  condition 
$\min(k_{\mathbf A},k_{\mathbf B})\geq m$  and assumption \textup{(i)} in Theorem \ref{th:uniquenessandalgorithm} imply   assumption \textup{(iii)} in Theorem \ref{th:uniquenessonefm2} and condition \eqref{eq:uniqviaonefm}. Hence, by Theorem \ref{theorem2.9}, $r_{\mathcal T} = R$ and   the CPD of tensor $\mathcal T$ is unique.
\begin{lemma}\label{lemma:onemorelemma}
Let assumptions  \textup{(i)} and \textup{(iii)} in Theorem \ref{th:uniquenessandalgorithm} hold. Then\\
$\min(k_{\mathbf A},k_{\mathbf B})\geq m$.
\end{lemma}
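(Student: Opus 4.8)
The plan is to reduce the statement to condition \condWm via Lemma~\ref{some:lemma} and then argue by contradiction, using $k_{\mathbf C}=K$ to prescribe the support of a vector in $\textup{range}(\mathbf C^T)$. First I would apply Lemma~\ref{some:lemma} with $k=m$: assumption~\textup{(iii)} of Theorem~\ref{th:uniquenessandalgorithm} is precisely hypothesis \eqref{eq:newWk} of that lemma (with $\mathbf U=\mathbf U_m$), so \condWm holds for $(\mathbf A,\mathbf B,\mathbf C)$. By Definition~\ref{Def:Wm}, this means that any $\bm\lambda=[\lambda_1\ \dots\ \lambda_R]^T\in\textup{range}(\mathbf C^T)$ satisfying $r_{\mathbf A\Diag{\lambda_1,\dots,\lambda_R}\mathbf B^T}\leq m-1$ must have $\omega(\lambda_1,\dots,\lambda_R)\leq m-1$.

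Next I would suppose, toward a contradiction, that $k_{\mathbf A}\leq m-1$; the case $k_{\mathbf B}\leq m-1$ is identical by the symmetry $\mathbf A\leftrightarrow\mathbf B$ in both $\Diag{\cdot}$ and \condWm. Since $R-m=K-2\geq 0$, the matrix $\mathbf A$ has at least $m$ columns, and some $m$ of them, say $\mathbf a_{r_1},\dots,\mathbf a_{r_m}$ with $r_1<\dots<r_m$, are linearly dependent. For any $\bm\lambda$ supported on $\{r_1,\dots,r_m\}$ we have $\mathbf A\Diag{\lambda_1,\dots,\lambda_R}\mathbf B^T=\sum_{j=1}^m\lambda_{r_j}\mathbf a_{r_j}\mathbf b_{r_j}^T$, whose column space lies in $\operatorname{span}(\mathbf a_{r_1},\dots,\mathbf a_{r_m})$ and hence has dimension at most $m-1$. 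The goal is thus to produce such a $\bm\lambda$ that in addition lies in $\textup{range}(\mathbf C^T)$ and is nonzero on all of $r_1,\dots,r_m$: this yields $\omega(\lambda_1,\dots,\lambda_R)=m$, contradicting \condWm.

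The construction of this $\bm\lambda$ is the crux, and it is here that assumption~\textup{(i)}, $k_{\mathbf C}=K$, is essential. Writing $\bm\lambda=\mathbf C^T\mathbf x$, the vanishing of $\bm\lambda$ off $\{r_1,\dots,r_m\}$ amounts to $\mathbf x$ being orthogonal to the $R-m=K-2$ columns $\{\mathbf c_r:r\notin\{r_1,\dots,r_m\}\}$; as $k_{\mathbf C}=K$ forces these $K-2$ columns to be linearly independent, the admissible $\mathbf x$ form a $2$-dimensional space $V$. I would then check that each condition $\mathbf x^T\mathbf c_{r_j}=0$ defines only a line in $V$: if $\mathbf c_{r_j}$ lay in the span of the other $K-2$ columns, those would constitute $K-1$ dependent columns of $\mathbf C$, contradicting $k_{\mathbf C}=K$; hence $\mathbf x\mapsto\mathbf x^T\mathbf c_{r_j}$ is a nonzero functional on $V$. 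Over an infinite field, a single $\mathbf x\in V$ avoiding the resulting $m$ lines exists, and the associated $\bm\lambda$ has support exactly $\{r_1,\dots,r_m\}$.

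This $\bm\lambda$ then realizes $\bm\lambda\in\textup{range}(\mathbf C^T)$, $r_{\mathbf A\Diag{\lambda_1,\dots,\lambda_R}\mathbf B^T}\leq m-1$, and $\omega(\lambda_1,\dots,\lambda_R)=m$ simultaneously, contradicting \condWm. Therefore $k_{\mathbf A}\geq m$, and by symmetry $k_{\mathbf B}\geq m$, so $\min(k_{\mathbf A},k_{\mathbf B})\geq m$. I expect the only delicate point to be the dimension count establishing $\dim V=2$ together with the non-degeneracy of each functional $\mathbf x\mapsto\mathbf x^T\mathbf c_{r_j}$ on $V$; the remaining steps are routine bookkeeping.
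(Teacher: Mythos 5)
Your proof is correct, and it reaches the contradiction by a slightly different route than the paper. The paper argues directly against assumption \textup{(iii)}: assuming $m$ columns of $\mathbf A$ are dependent, it picks (exactly as you do, using $k_{\mathbf C}=K$) a vector $\mathbf x$ with $\mathbf x^T\mathbf c_r\ne 0$ precisely on the dependent index set, sets $\mathbf f=\mathbf S_{m+l}(\mathbf C)^T(\mathbf x\otimes\dots\otimes\mathbf x)\ne\mathbf 0$, and shows ${\mathbf\Phi}_{m,l}(\mathbf A,\mathbf B)\mathbf f=\mathbf 0$ by observing that the relevant columns of ${\mathbf\Phi}_{m,l}(\mathbf A,\mathbf B)$ vanish (they are built from $m\times m$ minors of $m$ linearly dependent columns of $\mathbf A$). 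You instead first invoke Lemma \ref{some:lemma} with $k=m$ to convert assumption \textup{(iii)} into condition \condWm and then contradict \condWm with a $\bm\lambda=\mathbf C^T\mathbf x$ of weight exactly $m$ for which $r_{\mathbf A\Diag{\lambda_1,\dots,\lambda_R}\mathbf B^T}\leq m-1$. The two arguments share the same crux --- the existence of $\mathbf x$ orthogonal to exactly the $K-2$ columns of $\mathbf C$ outside the dependent set, which is where $k_{\mathbf C}=K$ enters (your explicit dimension count $\dim V=2$ and the non-degeneracy of each functional $\mathbf x\mapsto\mathbf x^T\mathbf c_{r_j}$ on $V$ is a welcome elaboration of a step the paper leaves implicit) --- but they differ in where the multilinear algebra is done: you delegate it to the already-proven Lemma \ref{some:lemma}, which makes your write-up shorter and avoids re-examining the structure of ${\mathbf\Phi}_{m,l}$, at the cost of routing through the factorization argument inside that lemma rather than the more elementary observation that minors of dependent columns vanish. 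Both are valid; your version is arguably the cleaner modularization.
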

\begin{proof}
Assume to the contrary that $k_{\mathbf A}< m$ or $k_{\mathbf B}< m$.
W.l.o.g. we assume that the first $m$ columns of $\mathbf A$ are linearly dependent.
We will get a contradiction with assumption \textup{(iii)} by constructing a nonzero vector $\mathbf f\in  \textup{range}(\mathbf S_{m+l}(\mathbf C)^T)$ such that ${\mathbf\Phi}_{m,l}(\mathbf A,\mathbf B)\mathbf f=\mathbf 0$.
Since $k_{\mathbf C}=K$, there exists  $\mathbf x\in\mathbb R^K$ such that
\begin{equation}
\label{eq:constructionofx}
\mathbf x^T\mathbf c_1\ne 0,\dots, \mathbf x^T\mathbf c_m\ne 0,\ \ \mathbf x^T\mathbf c_{m+1} =\dots=\mathbf x^T\mathbf c_R=0.
\end{equation}
We set $\mathbf f=\mathbf S_{m+l}(\mathbf C)^T(\underbrace{\mathbf x\otimes\dots\otimes\mathbf x}_{m+l})$ and we index the entries of $\mathbf f$ by $(m+l)$-tuples as in \eqref{eq:mplusltuples}. One can easily show that $\mathbf f$  has entries $(\mathbf x^T\mathbf c_{r_1})\dots (\mathbf x^T\mathbf c_{r_{m+l}})$. Hence, by \eqref{eq:constructionofx},
\begin{align*}
(\mathbf x^T\mathbf c_{r_1})\dots (\mathbf x^T\mathbf c_{r_{m+l}})=0,\ &\text{ if }\ 
\{r_1,\dots,r_{m+l}\}\setminus\{1,\dots,m\}\ne \emptyset,\\
(\mathbf x^T\mathbf c_{r_1})\dots (\mathbf x^T\mathbf c_{r_{m+l}})\ne0,\ &\text{ if }\ 
\{r_1,\dots,r_{m+l}\}\setminus\{1,\dots,m\}= \emptyset.
\end{align*}
On the other hand, by Definition \ref{def:matrixPhi} and the assumption of linear dependence of the vectors $\mathbf a_1,\dots,\mathbf a_m$, the columns of ${\mathbf\Phi}_{m,l}(\mathbf A,\mathbf B)$ indexed by the $(m+l)$-tuples \eqref{eq:mplusltuples} such that  
$
\{r_1,\dots,r_{m+l}\}\setminus\{1,\dots,m\}=\emptyset
$
are zero. Hence, ${\mathbf\Phi}_{m,l}(\mathbf A,\mathbf B)\mathbf f=\mathbf 0$.
\end{proof}
\subsection{Properties of the matrix $\mathbf S_{m+l}(\mathbf C)^T$}
The following auxiliary Lemma will be used in Subsections \ref{Subsectionequiv} and \ref{Subsection:last}.
Since the proof is rather long and technical, it is included in \ref{section:Appendix}. 
\begin{lemma}\label{lemma:suppl}
Let $\mathbf C\in\mathbb R^{K\times R}$, $k_{\mathbf C}=K$, $m=R-K+2$, $l\geq 0$,  let $\mathbf F$  satisfy \eqref{P1}--\eqref{P2}, and let $\mathbf F^{(m+l)}$ be defined by \eqref{eq:Fm}. Then
\begin{itemize}
\item[\textup{(i)}] $\dim  \left(\ker (\mathbf S_{m+l}(\mathbf C)^T)\bigcap S^{m+l}(\mathbb R^{K^{m+l}})\right)=C^{K-1}_R$;
\item[\textup{(ii)}] $\ker (\mathbf S_{m+l}(\mathbf C)^T)\bigcap S^{m+l}(\mathbb R^{K^{m+l}})=\textup{range}\left(\mathbf F^{(m+l)}\right)$;					
\item[\textup{(iii)}]$\textup{range} (\mathbf S_{m+l}(\mathbf C)^T)=\mathbf S_{m+l}(\mathbf C)^T(S^{m+l}(\mathbb R^{K^{m+l}}))$;
\item[\textup{(iv)}]
$\dim\ (\textup{range} (\mathbf S_{m+l}(\mathbf C)^T))=
C^{m+l}_{K+m+l-1}-C^{K-1}_R$.
\end{itemize}
\end{lemma}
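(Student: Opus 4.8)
The plan is to identify $S^{m+l}(\mathbb R^{K^{m+l}})$ with the space of symmetric tensors of order $m+l$ over $\mathbb R^K$, and to exploit throughout that, by \eqref{eq:def2.3star}, every column of $\mathbf S_{m+l}(\mathbf C)$ is itself such a symmetric tensor, and that $\mathbf S_{m+l}(\mathbf C)^T(\mathbf x\otimes\dots\otimes\mathbf x)$ has $(r_1,\dots,r_{m+l})$th entry $(\mathbf x^T\mathbf c_{r_1})\cdots(\mathbf x^T\mathbf c_{r_{m+l}})$, exactly as in the proof of Lemma \ref{some:lemma}. I would first settle \textup{(iii)} and reduce \textup{(iv)} to \textup{(i)}. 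Because each column of $\mathbf S_{m+l}(\mathbf C)$ lies in $S^{m+l}(\mathbb R^{K^{m+l}})$, for every $\mathbf v\in\mathbb R^{K^{m+l}}$ we have $\mathbf S_{m+l}(\mathbf C)^T\mathbf v=\mathbf S_{m+l}(\mathbf C)^T\mathbf v_S$, where $\mathbf v_S$ is the orthogonal projection of $\mathbf v$ onto $S^{m+l}(\mathbb R^{K^{m+l}})$; hence the image of the full space coincides with the image of the symmetric subspace, which is \textup{(iii)}. Rank--nullity applied to the restriction $\mathbf S_{m+l}(\mathbf C)^T|_{S^{m+l}}$, together with $\dim S^{m+l}(\mathbb R^{K^{m+l}})=C^{m+l}_{K+m+l-1}$, then shows that \textup{(i)} and \textup{(iv)} are equivalent, so it remains to prove \textup{(ii)} and the single count \textup{(i)}.

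For the inclusion $\supseteq$ in \textup{(ii)}, note that a column $\mathbf f$ of $\mathbf F$ is, by \eqref{P1} and $k_{\mathbf C}=K$, orthogonal to exactly $K-1$ columns of $\mathbf C$, hence non-orthogonal to exactly $R-(K-1)=m-1$ of them. Thus for any index tuple with at least $m$ distinct entries at least one factor $\mathbf f^T\mathbf c_{r_p}$ vanishes, so $\mathbf S_{m+l}(\mathbf C)^T(\mathbf f\otimes\dots\otimes\mathbf f)=\mathbf 0$ and $\operatorname{range}(\mathbf F^{(m+l)})\subseteq\ker(\mathbf S_{m+l}(\mathbf C)^T)\cap S^{m+l}(\mathbb R^{K^{m+l}})$. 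I would also record here that $k_{\mathbf F}\geq 2$: if two columns of $\mathbf F$ were proportional, their common direction would be orthogonal to the union of two distinct $(K-1)$-subsets of columns of $\mathbf C$, i.e. to at least $K$ of them, which is impossible for a nonzero vector when $k_{\mathbf C}=K$.

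The heart of the lemma is the matching bound $\dim(\ker(\mathbf S_{m+l}(\mathbf C)^T)\cap S^{m+l})\le C^{K-1}_R$ together with the full column rank of $\mathbf F^{(m+l)}$; granting both, the inclusion of the previous paragraph upgrades to the equality \textup{(ii)} and delivers \textup{(i)}, whence \textup{(iv)}. I would prove this in the apolarity picture: writing $\ell_r=\mathbf c_r^T\mathbf z$ and identifying $S^{m+l}(\mathbb R^{K^{m+l}})$ with homogeneous forms of degree $m+l$, the column span of $\mathbf S_{m+l}(\mathbf C)$ becomes $\operatorname{span}\{\ell_{r_1}\cdots\ell_{r_{m+l}}:\ \text{at least } m \text{ distinct indices}\}$ and the intersection in question is its orthogonal (apolar) complement; the task is therefore to show that these products span a subspace of codimension exactly $C^{K-1}_R$. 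The hypothesis $k_{\mathbf C}=K$ enters precisely here, as the statement that $[\mathbf c_1],\dots,[\mathbf c_R]$ lie in general linear position, so that the $C^{K-1}_R$ columns of $\mathbf F$ are the normals to the hyperplanes spanned by the $(K-1)$-subsets, and what must be shown is that the $(m+l)$-th powers of these hyperplane forms are linearly independent and exhaust the apolar complement. I expect to carry this out by induction on $l$: the base case $l=0$ involves only products of $m=R-K+2$ distinct forms and is governed by the compound-matrix identity \eqref{eq:LinkGEVDidentity} and the analysis of \cite{LinkGEVD}, while the inductive step peels off one factor in $\mathbf F^{(m+l)}=\mathbf F^{(m+l-1)}\odot\mathbf F$ and tracks the change in codimension. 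The delicate and most error-prone part, and the reason the argument is long, will be the bookkeeping showing that adding a factor increases the dimension of the product span by exactly the right amount and that no symmetric tensor outside $\operatorname{range}(\mathbf F^{(m+l)})$ enters the kernel; the independence of the hyperplane-form powers is where general linear position is used decisively.
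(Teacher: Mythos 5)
Your treatment of the easy parts is sound and essentially coincides with the paper's: statement \textup{(iii)} and the equivalence of \textup{(i)} and \textup{(iv)} follow from rank--nullity applied to the restriction of $\mathbf S_{m+l}(\mathbf C)^T$ to $S^{m+l}(\mathbb R^{K^{m+l}})$ (the paper packages this as Lemma \ref{lemmasupplmats12}), and your verification that $\operatorname{range}(\mathbf F^{(m+l)})\subseteq\ker(\mathbf S_{m+l}(\mathbf C)^T)\cap S^{m+l}(\mathbb R^{K^{m+l}})$ --- each column $\mathbf f$ of $\mathbf F$ is non-orthogonal to only $m-1$ columns of $\mathbf C$, so every product $(\mathbf f^T\mathbf c_{r_1})\cdots(\mathbf f^T\mathbf c_{r_{m+l}})$ over a tuple with at least $m$ distinct indices vanishes --- is exactly the paper's Lemma \ref{lemma5.1111}\textup{(ii)}. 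The aside on $k_{\mathbf F}\geq 2$ is correct but not needed for this lemma.

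The genuine gap is that the heart of the lemma is announced rather than proved. Two things are missing. First, the full column rank of $\mathbf F^{(m+l)}$ (i.e. $r_{\mathbf F^{(m+l)}}=C^{K-1}_R$), which you invoke but never establish; the paper imports it from \cite[Proposition 1.10]{LinkGEVD}, and without it the inclusion you proved gives no lower bound on the kernel dimension. Second, and more seriously, the matching upper bound $\dim(\ker(\mathbf S_{m+l}(\mathbf C)^T)\cap S^{m+l}(\mathbb R^{K^{m+l}}))\leq C^{K-1}_R$, equivalently $r_{\mathbf S_{m+l}(\mathbf C)^T}\geq C^{m+l}_{R+l+1}-C^{m-1}_R$, is exactly the statement that your ``products of linear forms with at least $m$ distinct factors'' span a subspace of the right dimension --- and for this you offer only a plan (``apolarity picture'', ``induction on $l$'', ``peel off one factor and track the change in codimension'') while conceding that the bookkeeping is the delicate part. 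The base case of your induction does not reduce to \eqref{eq:LinkGEVDidentity} or to the analysis of \cite{LinkGEVD} in any way you have made precise, and the inductive step has no content as stated. The paper's actual argument (Lemma \ref{lemma5.1}) is different in structure and is where all the work lies: it selects an explicit family $E$ of $C^{m+l}_{R+l+1}-C^{m-1}_R$ index tuples, and proves that the corresponding columns of $\mathbf S_{m+l}(\mathbf C)$ are linearly independent by a \emph{downward} induction on the number $p$ of repeated indices, testing a putative kernel vector against $\mathbf x^{(m+l)}$ for $\mathbf x$ a generic combination of $2+l-p$ dual-basis vectors of a nonsingular $K\times K$ submatrix of $\mathbf C$, and concluding via the fact that a polynomial in $t_1,\dots,t_{2+l-p}$ vanishing generically is identically zero. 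None of this mechanism, or an equivalent one, appears in your proposal, so the key inequality --- and with it statements \textup{(i)}, \textup{(ii)} and \textup{(iv)} --- remains unproven.
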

\subsection{Proof of equivalence of Theorems \ref{th:uniquenessandalgorithm} and \ref{th:uniquenessandalgorithmrefol}}
\label{Subsectionequiv}
 We prove that assumption \textup{(iii)} in Theorem \ref{th:uniquenessandalgorithm} is equivalent to
assumption \textup{(iii)} in Theorem \ref{th:uniquenessandalgorithmrefol}.
 By \eqref{equation:2.19},
it is sufficient to prove that
\begin{equation}
      \begin{split}
      \label{eq4.8}
  \dim  \left(\ker (\mathbf R_{m,l}(\mathcal T))\bigcap S^{m+l}(\mathbb R^{K^{m+l}})\right)\geq C^{K-1}_R+1\Leftrightarrow\\
\text{the matrix }\  \mathbf \Phi_{m,l}(\mathbf A,\mathbf B)\mathbf U_m\ \text{does not have full column rank}.
  \end{split}
 \end{equation}
To prove \eqref{eq4.8} we will use the following result for 
$\mathbf X:=\mathbf \Phi_{m,l}(\mathbf A,\mathbf B)$, $\mathbf Y:=\mathbf S_{m+l}(\mathbf C)^T$, and 
$E:=S^{m+l}(\mathbb R^{K^{m+l}})$: if  $E$ is a subspace and $\mathbf X$ and $\mathbf Y$ are matrices such that $\mathbf X\mathbf Y$ is defined, then
 \begin{equation}
 \label{eq:4.9}
      \begin{split}
      &\dim\left(\textup{ker}(\mathbf X\mathbf Y)\cap E \right)\geq  \dim\left(\textup{ker}(\mathbf Y)\cap E \right)+1\quad \Leftrightarrow\\
      &\text{there exists a nonzero vector}\ \mathbf f\in   E\setminus \textup{ker}(\mathbf Y)\ \text{such that }\ \mathbf X\mathbf Y\mathbf f=0.
      \end{split}
 \end{equation}
 We have
       \begin{flalign*}
          \dim  \left(\ker (\mathbf R_{m,l}(\mathcal T))\bigcap S^{m+l}(\mathbb R^{K^{m+l}})\right)\geq C^{K-1}_R+1
          \qquad\qquad\qquad\qquad\xLeftrightarrow{\eqref{eq:mainidentity}}\\
         \left\{\begin{aligned}
            \dim\left(\text{ker} ({\mathbf\Phi}_{m,l}(\mathbf A,\mathbf B)\mathbf S_{m+l}(\mathbf C)^T)\bigcap S^{m+l}(\mathbb R^{K^{m+l}})  \right) \geq C^{K-1}_R+1=\\
           \dim\left(\text{ker} (\mathbf S_{m+l}(\mathbf C)^T)\bigcap S^{m+l}(\mathbb R^{K^{m+l}})  \right)+1
          \end{aligned}\right\}  \xLeftrightarrow{\eqref{eq:4.9}}\\
          \left\{\begin{aligned} 
          \text{there exists a nonzero vector}\  \mathbf f\in   S^{m+l}(\mathbb R^{K^{m+l}})\setminus \textup{ker}(\mathbf S_{m+l}(\mathbf C)^T) \\
           \text{ such that } \ \mathbf \Phi_{m,l}(\mathbf A,\mathbf B)\mathbf S_{m+l}(\mathbf C)^T\mathbf f=0
                    \end{aligned}\right\}\ \xLeftrightarrow{\ \ \ \ }\\
          \text{the matrix }\  \mathbf \Phi_{m,l}(\mathbf A,\mathbf B)\mathbf U_m\ \text{does not have full column rank},
            \end{flalign*}
   where the equality in the second statement holds by Lemma \ref{lemma:suppl} \textup{(i)} and the last equivalence follows from $\textup{range}(\mathbf U_m)=\textup{range}(\mathbf S_{m+l}(\mathbf C)^T)$.
 \subsection{Proof of the statement on algebraic computation in  Theorem \ref{th:uniquenessandalgorithm}}\label{Subsection:last}
  The overall procedure that constitutes the proof of the statement on algebraic computation is summarized
 in Algorithm \ref{alg:onefactormatrix}  and  explained in Subsection \ref{subsection2.4.2}.
 In this subsection we prove statements \eqref{eq:beforeFFF}--\eqref{eq:FFF}.
 \begin{lemma}\label{lemma4.5}
Let assumptions \textup{(i)} and \textup{(iii)} in Theorem \ref{th:uniquenessandalgorithmrefol} hold
and let $\mathbf F$ satisfy \eqref{P1}--\eqref{P2}.
Then \eqref{eq:beforeFFF}--\eqref{eq:FFF} hold.
\end{lemma}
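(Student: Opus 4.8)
The plan is to derive all three claims from the factorization identity \eqref{eq:mainidentity}, namely $\mathbf R_{m,l}(\mathcal T)=\mathbf \Phi_{m,l}(\mathbf A,\mathbf B)\mathbf S_{m+l}(\mathbf C)^T$, together with the structural properties of $\mathbf S_{m+l}(\mathbf C)^T$ collected in Lemma \ref{lemma:suppl}. Since $\mathbf F$ depends only on $\mathbf C$ and assumption \textup{(i)} gives $k_{\mathbf C}=K$, all hypotheses of Lemma \ref{lemma:suppl} are in force throughout.

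First I would establish $k_{\mathbf F}\geq 2$. By \eqref{P1}--\eqref{P2} each of the $C^{K-1}_R$ columns of $\mathbf F$ is attached to a $(K-1)$-subset of columns of $\mathbf C$, namely the columns it is orthogonal to; since $k_{\mathbf C}=K$ every such subset is linearly independent, so it determines its column of $\mathbf F$ uniquely up to scaling, and distinct columns of $\mathbf F$ correspond to distinct subsets. If two columns $\mathbf f_i,\mathbf f_j$ with $i\neq j$ were proportional, then $\mathbf f_i$ would be orthogonal to the at least $K$ columns of $\mathbf C$ indexed by the union of the two subsets; but any $K$ of these columns span $\mathbb R^K$ because $k_{\mathbf C}=K$, forcing $\mathbf f_i=\mathbf 0$, a contradiction. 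Hence any two columns of $\mathbf F$ are independent.

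Next I would prove \eqref{eq:FFF}. From \eqref{eq:mainidentity} we have $\ker(\mathbf S_{m+l}(\mathbf C)^T)\subseteq \ker(\mathbf R_{m,l}(\mathcal T))$, so intersecting with $S^{m+l}(\mathbb R^{K^{m+l}})$ and using Lemma \ref{lemma:suppl}\textup{(ii)} yields the inclusion $\textup{range}(\mathbf F^{(m+l)})=\ker(\mathbf S_{m+l}(\mathbf C)^T)\cap S^{m+l}(\mathbb R^{K^{m+l}})\subseteq \ker(\mathbf R_{m,l}(\mathcal T))\cap S^{m+l}(\mathbb R^{K^{m+l}})$. The left-hand side has dimension $C^{K-1}_R$ by Lemma \ref{lemma:suppl}\textup{(i)}, while the right-hand side has the same dimension by assumption \textup{(iii)} of Theorem \ref{th:uniquenessandalgorithmrefol}; an inclusion of subspaces of equal finite dimension is an equality, which is exactly \eqref{eq:FFF}. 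As a by-product, $\textup{range}(\mathbf F^{(m+l)})$ has dimension equal to its column count $C^{K-1}_R$, so $\mathbf F^{(m+l)}$ itself has full column rank.

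It remains to show that $\mathbf F^{(m+l-1)}$ has full column rank, and this is the delicate point. One is tempted to descend from $\mathbf F^{(m+l)}$, but full column rank does \emph{not} transfer downward through Khatri--Rao powers, so a separate argument is needed. For $l\geq 1$ I would simply re-run the preceding dimension count with $l$ replaced by $l-1$: Lemma \ref{lemma:suppl}\textup{(i)}--\textup{(ii)} gives $\textup{range}(\mathbf F^{(m+l-1)})=\ker(\mathbf S_{m+l-1}(\mathbf C)^T)\cap S^{m+l-1}(\mathbb R^{K^{m+l-1}})$ of dimension $C^{K-1}_R$, which matches the column count. The genuinely hard case is $l=0$, where the claim is that the $C^{K-1}_R$ symmetric powers $\mathbf f_i^{\otimes(m-1)}$ form a basis of $S^{m-1}(\mathbb R^{K^{m-1}})$; the dimensions coincide because $m-1=R-K+1$ forces $\dim S^{m-1}(\mathbb R^{K^{m-1}})=C^{m-1}_{K+m-2}=C^{R-K+1}_R=C^{K-1}_R$. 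Here pairwise independence $k_{\mathbf F}\geq 2$ is insufficient once $K\geq 3$; one must exploit that the $\mathbf f_i$ arise as the duals of the $(K-1)$-subsets of a matrix with $k_{\mathbf C}=K$, which is precisely the structural property of $\mathbf F=\mathcal B(\mathbf C)$ established in \cite{LinkGEVD}, and this I would invoke for the base case. The values $l\geq 1$ then also follow from this base case by the elementary fact that contracting $\mathbf f_i^{\otimes(d+1)}$ with a generic vector in one mode recovers $\mathbf f_i^{\otimes d}$ up to a nonzero scalar, so that full column rank of $\mathbf F^{(d)}$ lifts to $\mathbf F^{(d+1)}$.
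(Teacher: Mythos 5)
Your proposal is correct, and for the central claim \eqref{eq:FFF} it follows the same route as the paper: the inclusion $\ker(\mathbf S_{m+l}(\mathbf C)^T)\subseteq\ker(\mathbf R_{m,l}(\mathcal T))$ coming from \eqref{eq:mainidentity}, combined with Lemma \ref{lemma:suppl} and the dimension hypothesis \textup{(iii)}, forces equality of the two intersections with $S^{m+l}(\mathbb R^{K^{m+l}})$. Your version is in fact slightly more streamlined, since you compare dimensions directly rather than routing through the equivalence of assumption \textup{(iii)} in Theorems \ref{th:uniquenessandalgorithm} and \ref{th:uniquenessandalgorithmrefol} established in Subsection \ref{Subsectionequiv}, as the paper does. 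The real divergence is in \eqref{eq:beforeFFF}: the paper disposes of it in one line by citing \cite[Proposition 1.10]{LinkGEVD} for the implication $k_{\mathbf C}=K\Rightarrow\eqref{eq:beforeFFF}$, whereas you prove $k_{\mathbf F}\geq 2$ directly from \eqref{P1}--\eqref{P2} (a clean union-of-$(K-1)$-subsets argument) and obtain full column rank of $\mathbf F^{(m+l-1)}$ for $l\geq 1$ by applying Lemma \ref{lemma:suppl} at level $l-1$, correctly isolating $l=0$ as the only case where the external structural result on $\mathcal B(\mathbf C)$ is genuinely needed. This buys more self-containedness and makes visible exactly which part of the claim rests on \cite{LinkGEVD}; note only that Lemma \ref{lemma5.1111}\textup{(i)}, on which Lemma \ref{lemma:suppl} relies, itself cites that same proposition, so your $l\geq 1$ argument ultimately bottoms out at the same external source --- it is not circular within the paper's logical ordering, merely a longer path to the same citation. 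Your closing remark that full column rank lifts (but does not descend) through Khatri--Rao powers via contraction with a generic vector is also correct and would serve as an alternative way to handle $l\geq 1$ from the $l=0$ base case.
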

\begin{proof} 
The implication $k_{\mathbf C}=K\ \Rightarrow\ $ \eqref{eq:beforeFFF} was proved in \cite[Proposition 1.10]{LinkGEVD}.  
In Subsection \ref{Subsectionequiv} we proved that
assumption \textup{(iii)} in Theorem \ref {th:uniquenessandalgorithm} holds. By \eqref{eq:mainidentity}, 
Theorem \ref {th:uniquenessandalgorithm} \textup{(iii)}, and Lemma \ref{lemma:suppl} we have
\begin{equation*}
\begin{split}
&\ker (\mathbf R_{m,l}(\mathcal T))\bigcap S^{m+l}(\mathbb R^{K^{m+l}})=\\
&\text{ker} ({\mathbf\Phi}_{m,l}(\mathbf A,\mathbf B)\mathbf S_{m+l}(\mathbf C)^T)\bigcap S^{m+l}(\mathbb R^{K^{m+l}})=\\
&\text{ker} (\mathbf S_{m+l}(\mathbf C)^T)\bigcap S^{m+l}(\mathbb R^{K^{m+l}}) = 
\textup{range}\left(\mathbf F^{(m+l)}\right),
\end{split}
\end{equation*}
which completes the proof of \eqref{eq:FFF}.
\end{proof}
\subsection{Proof of  Theorem \ref{th:maintheoremfcr}}\label{subsection6.6}We check the assumptions in 
Theorem \ref{th:uniquenessandalgorithmrefol} for $m=2$. 
Assumption \textup{(i)} holds since $r_{\mathbf C}=K=R$ implies $k_{\mathbf C}=K$ and 
assumption \textup{(iii)} coincides with \eqref{eq:maincondfcr}. 
To prove assumption \textup{(ii)} we assume to the contrary that $(\mathbf A\odot\mathbf B)[\lambda_1\ \dots\ \lambda_R]^T=\mathbf 0$. Then  $r_{\mathbf A\Diag{\lambda_1,\dots,\lambda_R} \mathbf B^T)}=0$.
In Subsection \ref{Subsectionequiv}  we explained that assumption \textup{(iii)} in Theorem \ref {th:uniquenessandalgorithm} also holds. Hence, by Lemma \ref{some:lemma},
condition $\text{\textup{(W{\scriptsize 2})}}$ holds  for the triplet $(\mathbf A,\mathbf B,\mathbf C)$.
Hence, at most one of the values $\lambda_1,\dots,\lambda_R$ is not zero. If such a $\lambda_r$ exists, then
$\mathbf a_r=\mathbf 0$ or $\mathbf b_r=\mathbf 0$ yielding that $\min(k_{\mathbf A}, k_{\mathbf B})=0$. On the other hand,
 by Lemma \ref{lemma:onemorelemma}, $\min(k_{\mathbf A}, k_{\mathbf B})\geq 2$,  which is a contradiction.
 Hence, $\lambda_1=\dots=\lambda_R=0$.
 
\section{Discussion}\label{sec:discussion}
A number of conditions (called $\text{\textup{(K{\scriptsize m})}}$, $\text{\textup{(C{\scriptsize m})}}$, 
$\text{\textup{(U{\scriptsize m})}}$, and $\text{\textup{(W{\scriptsize m})}}$) for uniqueness of CPD of a specific tensor have been proposed in \cite{PartI,PartII}. It was  shown that 
each subsequent condition in $\text{\textup{(K{\scriptsize m})}},\dots, \text{\textup{(W{\scriptsize m})}}$ is
more general than the preceding one, but harder to use. Verification of conditions $\text{\textup{(K{\scriptsize m})}}$ 
and $\text{\textup{(C{\scriptsize m})}}$ reduces to the computation of matrix rank. In contrast,  conditions $\text{\textup{(U{\scriptsize m})}}$ and $\text{\textup{(W{\scriptsize m})}}$  are  not easy to check for a specific tensor
but hold automatically for  generic tensors of certain dimensions and rank \cite{AlgGeom1}.

In this paper we have proposed new sufficient conditions for uniqueness  that
can be verified by  the computation of matrix rank,  are more relaxed
than $\text{\textup{(K{\scriptsize m})}}$ and $\text{\textup{(C{\scriptsize m})}}$,
but that cannot be more relaxed than $\text{\textup{(W{\scriptsize m})}}$.
Nevertheless, examples illustrate  that in many cases  the new conditions may be considered as an ``easy to check analogue''
of $\text{\textup{(U{\scriptsize 2})}}$ ($\Leftrightarrow\text{\textup{(W{\scriptsize 2})}}$) and $\text{\textup{(W{\scriptsize m})}}$.

We have also  proposed an  algorithm to compute the factor matrices. The algorithm relies only on standard linear algebra, and has as input the tensor $\mathcal T$, the tensor rank $R$, and  a nonnegative integer parameter $l$. 
 The algorithm basically reduces the problem to the construction of a $C^{m+l}_{K+m+l-1}\times C^{m+l}_{K+m+l-1}$ matrix $\mathbf Q$, the computation of its $C^{K-1}_R$-dimensional null space, and  the GEVD of a $C^{K-1}_R\times C^{K-1}_R$ matrix pencil, where  $m=R-K+2$.
For $l=0$, Algorithms \ref{alg:fcr}   and \ref{alg:onefactormatrix} coincide with algorithms from \cite{DeLathauwer2006} and \cite{LinkGEVD}, respectively.
Our derivation is different from the derivations in \cite{DeLathauwer2006} and \cite{LinkGEVD} but has the same structure: from the CPD $\mathcal T=[\mathbf A,\mathbf B,\mathbf C]_R$ we derive
a set of equations that depend only on $\mathbf C$;  we find $\mathbf C$ from the new system by  means of GEVD,
and then recover $\mathbf A$ and $\mathbf B$ from $\mathcal T$ and  $\mathbf C$.

It is interesting to note that the new algorithm (with $l=1$) computes the CPD of a generic $3\times 7\times 12$ tensor of rank $12$ 
in less than $1$ second while optimization-based algorithms  (we checked a Gauss-Newton  dogleg trust region method)
fail to find the solution in a reasonable amount of time.

We have demonstrated that our algorithm (with $l\leq 2$) can find the  CPD of a generic $I\times J\times K$ tensor  of rank $R$
if $R\leq K\leq (I-1)(J-1)$ and $R\leq 24$. We conjecture that the  algorithm (possibly with $l\geq 3$) can also find the CPD for $R\geq 25$.
(It is known that  the CPD of a generic tensor is not unique if  $R> (I-1)(J-1)$).
In that case the  $C^{m+l}_{K+m+l-1}\times C^{m+l}_{K+m+l-1}$ matrix $\mathbf Q$  becomes large and the computation,
as it is proposed in the paper, becomes infeasible.
Since the null space of  $\mathbf Q$ is just $R$-dimensional the approach may possibly be scaled by using 
iterative methods to compute the null space. 

\appendix
\section{Derivation of identity \eqref{eq:mainidentity}}\label{subsection:construction}
 Let $\mathcal T=(t_{ijk})_{i,j,k=1}^{I,J,K}=[\mathbf A,\mathbf B,\mathbf C]_R$. In this section we establish a link between the matrix 
  $\mathbf R_{m,l}(\mathcal T)$ defined in subsection \ref{subsubsection:mainconstruction}
  and the factor matrices $\mathbf A$, $\mathbf B$, and $\mathbf C$.
We show that the matrix $\mathbf R_{m,l}(\mathcal T)$ is obtained from $\mathcal T$ by  taking the following steps: 1) taking the $(m+l)$th Kronecker power of $\mathcal T$; 2) making two  partial skew-symmetrizations and one partial symmetrization of the result; 3) reshaping the result into an $I^{m+l}J^{m+l}\times K^{m+l}$ matrix. The main identity is obtained by applying steps 1)--3) to the both sides of \eqref{eqintro2}.
\subsection{Step 1: Kronecker product power of $\mathcal T$}
 The Kronecker product square of $\mathcal T$,
 $\mathcal T^{(2)}:=\mathcal T\otimes\mathcal T$, is an $I\times J\times K$ block-tensor whose  
 $(i,j,k)$th block is the $I\times J\times K$ tensor $t_{ijk}\mathcal T$. Equivalently,
 $\mathcal T^{(2)}$  is an $I^2\times J^2\times K^2$ tensor whose
 $(\tilde i,\tilde j,\tilde k):=(i_1-1)I+i_2,(j_1-1)J+j_2,(k_1-1)K+k_2)$th entry is
 $$
 t^{(2)}_{\tilde i\tilde j\tilde k} = t_{i_1j_1k_1}t_{i_2j_2k_2}.
 $$
 Similarly, the $(l+m)$-th Kronecker product power of $\mathcal T$,
 \begin{equation*}
 \mathcal T^{(m+l)} := \underbrace{\mathcal T\otimes\dots\otimes\mathcal T}_{m+l},
 \end{equation*} 
is an  
$I^{m+l}\times J^{m+l}\times K^{m+l}$ tensor  whose $(\tilde i,\tilde j,\tilde k)$th entry is
\begin{equation}
t^{(m+l)}_{\tilde i\tilde j\tilde k}=
t_{i_1j_1k_1}t_{i_2j_2k_2}\cdots t_{i_{m+l}j_{m+l}k_{m+l}},
\label{eq:3rdoversion}
\end{equation}
where $\tilde i$ ,$\tilde j$, and $\tilde k$ are defined in \eqref{eq:tildei}, \eqref{eq:tildej}, and \eqref{eq:tildek}, respectively.
One can easily check that if $\mathcal T=\sum\limits_{r=1}^R\mathbf a_r\outerprod \mathbf b_r\outerprod \mathbf c_r$, then
 $$
\mathcal T^{(m+l)} = 
\sum\limits_{r_1,\dots,r_{m+l}=1}^R
(\mathbf a_{r_1}\otimes\dots\otimes\mathbf a_{r_{m+l}})\outerprod
(\mathbf b_{r_1}\otimes\dots\otimes\mathbf b_{r_{m+l}})\outerprod
(\mathbf c_{r_1}\otimes\dots\otimes\mathbf c_{r_{m+l}}).
$$
\subsection{Step 2: two  partial skew-symmetrizations and one partial symmetrization of a reshaped version of $\mathcal T^{(m+l)}$}
Recall that a higher-order tensor is said to be {\em symmetric  (resp. skew-symmetric) with respect to a given group of indices} or {\em partially symmetric (resp. skew-symmetric)} if its coordinates
do not alter by an arbitrary permutation of these indices (resp. if the sign changes with every interchange of two arbitrary indices in the group).

Let us recall the operations of (complete) symmetrization and skew-symmetriza\-tion.
With a general  $k$th-order $L\times \dots\times L$ tensor $\mathcal N$ one can  associate
its symmetric part $S^k (\mathcal N)$ and skew-symmetric part  $\Lambda^k(\mathcal N)$ as follows.
By construction,  $S^k (\mathcal N)$ is a tensor whose entry with indices $l_1,\dots,l_k$
is equal to
\begin{equation}
\frac{1}{k!}\sum\limits_{(p_1,\dots,p_k)\in P_{\{l_1,\dots,l_k\}}}  n_{p_1\dots p_k}.
\label{symmetrization}
\end{equation}
That is, to get $S^k (\mathcal N)$ we should take the  average  of $k!$ tensors obtained from $\mathcal N$ by
all possible permutations of the indices. Similarly,
$\Lambda^k(\mathcal N)$ is a tensor whose entry with indices $l_1,\dots,l_k$ is equal to
\begin{equation}
\begin{cases}
\frac{1}{k!}\sum\limits_{(p_1,\dots,p_k)\in P_{\{l_1,\dots,l_k\}}} \sgn(p_1,\dots,p_k)  n_{p_1\dots p_k},& \text{if } l_1,\dots,l_k\ \text{are distinct},\\
0,&\text{otherwise,}
\end{cases}
\label{skewsymmetrization}
\end{equation}
where 
$$
\sgn(p_1,\dots,p_k) \text{ denotes the signature of the permutation } (p_1,\dots,p_k).
$$
The definition of  $\Lambda^k(\mathcal N)$  differs from that of $S^k (\mathcal N)$  in that the signatures of the permutations are taken into account and that the entries of $\Lambda^k(\mathcal N)$ with  repeated indices  are necessarily zeros.
One can easily check that if $\mathcal N= \mathbf d_1\outerprod\dots\outerprod \mathbf d_k$ (that is, $\mathcal N$ is the $k$th order rank-$1$ tensor), then
\begin{align}
 S^k(\mathbf d_1\outerprod\dots\outerprod \mathbf d_k) &= 
\frac{1}{k!}\sum\limits_{(p_1,\dots,p_k)\in P_{\{1,\dots,k\}}}  \mathbf d_{p_1}\outerprod\dots\outerprod \mathbf d_{p_k},\label{eq:jtheq0}\\
 \Lambda^k(\mathbf d_1\outerprod\dots\outerprod \mathbf d_k) &= 
\frac{1}{k!}\sum\limits_{(p_1,\dots,p_k)\in P_{\{1,\dots,k\}}}  \sigma(p_1,\dots,p_k)\mathbf d_{p_1}\outerprod\dots\outerprod \mathbf d_{p_k}.
\label{eq:jtheq1}
\end{align}
Partial (skew-)symmetrization is a (skew-)symmetrization with respect to a given group of indices. Instead of presenting the formal definitions we illustrate both notions for an $M \times L\times L$ tensor $\mathcal N=(n_{ml_1l_2})_{m,l_1,l_2=1}^{M,L,L}$.
Partial symmetrization with respect to the group of indices $\{2,3\}$ maps the tensor $\mathcal N$ to a tensor that we denote by
$(\mathbf I_M\outerprod S^2)\mathcal N$, whose entry with indices $(m,l_1,l_2)$ is equal to
$$
\sum\limits_{(p_1,p_2)\in P_{\{l_1,l_2\}}} n_{ml_1l_2} = n_{ml_1l_2}+n_{ml_2l_1}.
$$
Similarly, by $(\mathbf I_M\outerprod \Lambda^2)\mathcal N$ we denote the tensor whose entry with indices $(m,l_1,l_2)$ is equal to
$$
\begin{cases}
\sum\limits_{(p_1,p_2)\in P_{\{l_1,l_2\}}} \sgn(p_1,p_2)n_{ml_1l_2} = n_{ml_1l_2}-n_{ml_2l_1},&\text{if } \ l_1\ne l_2,\\
0,&\text{if } \ l_1= l_2.
\end{cases}
$$
If $\mathcal N=\mathbf d_1\outerprod\mathbf d_2\outerprod\mathbf d_3\in\mathbb R^{M\times L\times L}$, then
\begin{align}
(\mathbf I_M\outerprod S^2)(\mathbf d_1\outerprod\mathbf d_2\outerprod\mathbf d_3)=
\mathbf d_1\outerprod S^2(\mathbf d_2\outerprod\mathbf d_3)=
\mathbf d_1\outerprod\mathbf d_2\outerprod\mathbf d_3+\mathbf d_1\outerprod\mathbf d_3\outerprod\mathbf d_2, \label{eq:s2MLL}\\
(\mathbf I_M\outerprod \Lambda^2)(\mathbf d_1\outerprod\mathbf d_2\outerprod\mathbf d_3)=
\mathbf d_1\outerprod \Lambda^2(\mathbf d_2\outerprod\mathbf d_3)=\mathbf d_1\outerprod\mathbf d_2\outerprod\mathbf d_3-\mathbf d_1\outerprod\mathbf d_3\outerprod\mathbf d_2.
\label{eq:l2MLL}
\end{align}
Thus, operations $(\mathbf I_M\outerprod S^2)$ and $(\mathbf I_M\outerprod \Lambda^2)$ symmetrize and skew-symmetrize the horizontal
slices of $\mathcal N$.

 Let us reshape the tensor $\mathcal T^{(m+l)}$ into an $I\times\dots\times I\times J\times\dots\times J\times K\times\dots\times K$ (each letter is repeated $m+l$ times) tensor $\widehat{\mathcal T}^{(m+l)}$ as:
\begin{equation}
\widehat{\mathcal T}^{(m+l)} = 
\sum\limits_{r_1,\dots,r_{m+l}=1}^R
(\mathbf a_{r_1}\outerprod\dots\outerprod\mathbf a_{r_{m+l}})\outerprod
(\mathbf b_{r_1}\outerprod\dots\outerprod\mathbf b_{r_{m+l}})\outerprod
(\mathbf c_{r_1}\outerprod\dots\outerprod\mathbf c_{r_{m+l}}).\label{eq:2.6}
\end{equation}
Then the  entries of $\widehat{\mathcal T}^{(m+l)}$ are given by 
\begin{equation}
\widehat{t}^{(m+l)}_{i_1\dots i_{m+l} j_1 \dots j_{m+l} k_1 \dots k_{m+l}}=
t_{i_1j_1k_1}t_{i_2j_2k_2}\cdots t_{i_{m+l}j_{m+l}k_{m+l}}.
\label{eq:hoversion}
\end{equation}
From \eqref{eq:3rdoversion} and \eqref{eq:hoversion} it follows that 
 $\widehat{\mathcal T}^{(m+l)}$ is just a higher-order representation of ${\mathcal T}^{(m+l)}$.

A new tensor $\widehat{\mathcal T}^{(m+l)}_{\Lambda\Lambda S}$ is obtained from  $\widehat{\mathcal T}^{(m+l)}$
by applying two partial skew-symmetrizations and  one partial symmetrization as follows:
\begin{equation}
\widehat{\mathcal T}^{(m+l)}_{\Lambda\Lambda S} := 
\left[(\Lambda^m\outerprod \mathbf I_I\outerprod\dots\outerprod\mathbf I_I)\outerprod (\Lambda^m\outerprod \mathbf I_J\outerprod\dots\outerprod\mathbf I_J)\outerprod S^{m+l}\right]\widehat{\mathcal T}^{(m+l)}.
\label{eq:2skewsymmonesymm}
\end{equation}
 To obtain $\widehat{\mathcal T}^{(m+l)}_{\Lambda\Lambda S}$  we first skew-symmetrize    $\widehat{\mathcal T}^{(m+l)}$  with respect to the group of indices $\{1,\dots,m\}$ (the first $m$ ``$I$'' dimensions), then  we skew-symmetrize  the result  with respect to the group of indices $\{m+l+1,\dots,2m+l\}$ (the first $m$ ``$J$'' dimensions), and, finally, we symmetrize the result  with respect to the group of indices $\{2m+2l+1,\dots,3m+3l\}$ (all ``$K$'' dimensions).
 From  \eqref{symmetrization}, \eqref{skewsymmetrization}, and \eqref{eq:hoversion}, it follows that the 
$(i_1,\dots, i_{m+l}, j_1, \dots, j_{m+l}, k_1, \dots, k_{m+l})$th entry of the tensor $\widehat{\mathcal T}^{(m+l)}_{\Lambda\Lambda S}$
is equal to zero if some index is repeated in $i_1,\dots,i_m$ or $j_1,\dots,j_m$ and  is equal to
\begin{align*}
&\frac{1}{(m+l)!}
\sum\limits_{\substack{(s_1,\dots,s_{m+l})\in\\ P_{\{k_1,\dots,k_{m+l}\}}}} 
\Bigg[\frac{1}{m!}\sum\limits_{\substack{(q_1,\dots,q_m)\in \\ P_{\{j_1,\dots,j_m\}}}}
\sgn(q_1,\dots,q_m)\times\\
&\qquad\qquad\Bigg(\frac{1}{m!}\sum\limits_{\substack{(p_1,\dots,p_m)\in \\ P_{\{i_1,\dots,i_m\}}}}
\sgn(p_1,\dots,p_m)
\prod\limits_{u=1}^m t_{p_uq_us_u}\prod\limits_{v=1}^l t_{i_{m+v}j_{m+v}s_{m+v}}
\Bigg)\Bigg]=\\
&\frac{1}{(m+l)!}\sum\limits_{\substack{(s_1,\dots,s_{m+l})\in\\ P_{\{k_1,\dots,k_{m+l}\}}}} 
\Bigg[\frac{1}{m!}\sum\limits_{\substack{(q_1,\dots,q_m)\in \\ P_{\{j_1,\dots,j_m\}}}}
\sgn(q_1,\dots,q_m)\times\\
&\qquad\qquad\qquad\qquad\frac{1}{m!}\det\left[\begin{matrix}
   t_{i_1q_1s_1}&\dots & t_{i_1q_ms_m}\\
   \vdots       &\vdots& \vdots\\
   t_{i_mq_1s_1}&\dots & t_{i_mq_ms_m}
   \end{matrix}\right]
  \prod\limits_{v=1}^l t_{i_{m+v}j_{m+v}s_{m+v}}\Bigg]=\\
   &\frac{1}{m!(m+l)!}\sum\limits_{\substack{(s_1,\dots,s_{m+l})\in\\ P_{\{k_1,\dots,k_{m+l}\}}}}
      \det\left[\begin{matrix}
      t_{i_1j_1s_1}&\dots & t_{i_1j_ms_m}\\
      \vdots       &\vdots& \vdots\\
      t_{i_mj_1s_1}&\dots & t_{i_mj_ms_m}
      \end{matrix}\right]
      \prod\limits_{v=1}^l t_{i_{m+v}j_{m+v}s_{m+v}},
   \end{align*}
  otherwise (we used twice the  Leibniz formula for the determinant).
Thus, by \eqref{eq:29}, the tensor $\widehat{\mathcal T}^{(m+l)}_{\Lambda\Lambda S}$
 and the matrix $\mathbf R_{m,l}(\mathcal T)$ have the same entries (in  step 3 it will be shown that
 $\mathbf R_{m,l}(\mathcal T)$ is a matrix unfolding of $\widehat{\mathcal T}^{(m+l)}_{\Lambda\Lambda S}$).
 
Let us apply partial skew-symmetrizations and partial symmetrization to the right-hand side of \eqref{eq:2.6}:  from \eqref{eq:2.6}, \eqref{eq:2skewsymmonesymm}, (see also \eqref{eq:s2MLL}--\eqref{eq:l2MLL} for the properties of the outer product) it follows that
\begin{equation}
\begin{split}
\widehat{\mathcal T}^{(m+l)}_{\Lambda\Lambda S} =
\sum\limits_{r_1,\dots,r_{m+l}=1}^R
\big[\Lambda^m(\mathbf a_{r_1}\outerprod\dots\outerprod \mathbf a_{r_{m}})&\outerprod\mathbf a_{r_{m+1}}\outerprod\dots\outerprod\mathbf a_{r_{m+l}}\outerprod\\
\Lambda^m(\mathbf b_{r_1}\outerprod\dots\outerprod \mathbf b_{r_{m}})\outerprod\mathbf b_{r_{m+1}}\outerprod\dots\outerprod\mathbf b_{r_{m+l}}\big]\outerprod
  & S^{m+l}(\mathbf c_{r_1}\outerprod\dots\outerprod\mathbf c_{r_{m+l}})=\\
\sum\limits_{r_1,\dots,r_{m+l}=1}^R{\mathcal F}^{\mathbf A,\mathbf B}_{r_1,\dots,r_{m+l}}
 \outerprod   & S^{m+l}(\mathbf c_{r_1}\outerprod\dots\outerprod\mathbf c_{r_{m+l}}), 
\end{split}
\label{eq:2.7}
\end{equation}
where the expressions $\Lambda^m(\mathbf a_{r_1}\outerprod\dots\outerprod \mathbf a_{r_{m}})$ and
$\Lambda^m(\mathbf b_{r_1}\outerprod\dots\outerprod \mathbf b_{r_{m}})$ are defined in
\eqref{eq:jtheq1}, the expression $S^{m+l}(\mathbf c_{r_1}\outerprod\dots\outerprod\mathbf c_{r_{m+l}})$ is defined in
 \eqref{eq:jtheq0}, and, by definition,
 \begin{equation*}
 \begin{split}
 {\mathcal F}^{\mathbf A,\mathbf B}_{r_1,\dots,r_{m+l}}:=
  & \Lambda^m(\mathbf a_{r_1}\outerprod\dots\outerprod \mathbf a_{r_{m}})\outerprod\mathbf a_{r_{m+1}}\outerprod\dots\outerprod\mathbf a_{r_{m+l}}\outerprod\\
  & \Lambda^m(\mathbf b_{r_1}\outerprod\dots\outerprod \mathbf b_{r_{m}})\outerprod\mathbf b_{r_{m+1}}\outerprod\dots\outerprod\mathbf b_{r_{m+l}}
 \end{split}
  \end{equation*}
  (recall that the vectors $\mathbf a_r$ and $\mathbf b_r$ are columns of the matrices $\mathbf A$ and $\mathbf B$, respectively).

Note that by construction,  $S^{m+l}(\mathbf c_{r_1}\outerprod\dots\outerprod\mathbf c_{r_{m+l}})$ is a completely symmetric tensor ( that is, the expression $S^{m+l}(\mathbf c_{r_1}\outerprod\dots\outerprod\mathbf c_{r_{m+l}})$ does not change after any permutation of the vectors $\mathbf c_{r_1},\dots,\mathbf c_{r_{m+l}}$). Taking this fact into account we can group the summands in \eqref{eq:2.7} as follows
\begin{equation}
\widehat{\mathcal T}^{(m+l)}_{\Lambda\Lambda S} =
\sum\limits_{1\leq r_1\leq\dots\leq r_{m+l}\leq R}\bigg(
 \sum\limits_{\substack{(p_1,\dots,p_{m+l})\in\\ P_{\{r_1,\dots,r_{m+l}\}}}}
 {\mathcal F}^{\mathbf A,\mathbf B}_{p_1,\dots,p_{m+l}}\bigg)
\outerprod
 S^{m+l}(\mathbf c_{r_1}\outerprod\dots\outerprod\mathbf c_{r_{m+l}}). 
\label{eq:2.8}
\end{equation}
\subsection{Step 3: Reshaping (unfolding) of $\widehat{\mathcal T}^{(m+l)}_{\Lambda\Lambda S}$ into the matrix $\mathbf R_{m,l}(\mathcal T)$}
We define the matricization operation 
$$
 \textup{Matr}:\ \mathbb R^{I\times\dots\times I\times J\times\dots\times J\times K\times\dots\times K}\ \rightarrow\  
 \mathbb R^{I^{m+l}J^{m+l}\times K^{m+l}}
$$
as follows: the $(i_1,\dots,i_{m+l},j_1,\dots,j_{m+l},k_1,\dots,k_{m+l})$th entry of a tensor is mapped to the $((\tilde i -1)J^{m+l}+\tilde j,\tilde k )$th entry of  a matrix, where $\tilde i$, $\tilde j$, and $\tilde k$ are defined in
\eqref{eq:tildei}, \eqref{eq:tildej}, and \eqref{eq:tildek}, respectively. One can easily verify that 
\begin{equation}
\begin{split}
\textup{Matr}&\left(
\mathbf a_{i_1}\outerprod\dots\outerprod\mathbf a_{i_{m+l}}\outerprod
\mathbf b_{j_1}\outerprod\dots\outerprod\mathbf b_{j_{m+l}}\outerprod
\mathbf c_{k_1}\outerprod\dots\outerprod\mathbf c_{k_{m+l}}
\right)=\\
&\left[
\mathbf a_{i_1}\otimes\dots\otimes\mathbf a_{i_{m+l}}\otimes
\mathbf b_{j_1}\otimes\dots\otimes\mathbf b_{j_{m+l}}\right]
(\mathbf c_{k_1}\otimes\dots\otimes\mathbf c_{k_{m+l}})^T
\end{split}
\label{eq:matrrank1}
\end{equation}
and that $\mathbf R_{m,l}(\mathcal T)=\textup{Matr}(\widehat{\mathcal T}^{(m+l)}_{\Lambda\Lambda S})$.

What is left to show is that the matricization of the right-hand side of \eqref{eq:2.8} coincides with the matrix ${\mathbf\Phi}_{m,l}(\mathbf A,\mathbf B)\mathbf S_{m+l}(\mathbf C)^T$.
In the sequel, when no confusion is possible, we will use $S^k$ and $\Lambda^k$ to denote ``symmetrization'' and ``skew-symmetrization''
of vector representations of a certain tensor: if $\mathbf d_1,\dots,\mathbf d_k\in\mathbb R^L$, then the vectors $S^k(\mathbf d_1\otimes\dots\otimes\mathbf d_k)$ and $\Lambda^k(\mathbf d_1\otimes\dots\otimes\mathbf d_k)$ are computed in the same way as in 
\eqref{eq:jtheq0}--\eqref{eq:jtheq1} but with ``$\outerprod$'' replaced by ``$\otimes$'':
\begin{align}
 S^k(\mathbf d_1\otimes\dots\otimes \mathbf d_k) &= 
\frac{1}{k!}\sum\limits_{(p_1,\dots,p_k)\in P_{\{1,\dots,k\}}}  \mathbf d_{p_1}\otimes\dots\otimes \mathbf d_{p_k},\nonumber\\
 \Lambda^k(\mathbf d_1\otimes\dots\otimes \mathbf d_k) &= 
\frac{1}{k!}\sum\limits_{(p_1,\dots,p_k)\in P_{\{1,\dots,k\}}}  \sigma(p_1,\dots,p_k)\mathbf d_{p_1}\otimes\dots\otimes \mathbf d_{p_k}.
\label{eq:jtheq1kron}
\end{align}
Hence, by \eqref{eq:2.7}, \eqref{eq:2.8}, and \eqref{eq:matrrank1}
\begin{align}
&\mathbf R_{m,l}(\mathcal T)=\textup{Matr}(\widehat{\mathcal T}^{(m+l)}_{\Lambda\Lambda S})=\notag\\
&\sum\limits_{1\leq r_1\leq\dots\leq r_{m+l}\leq R}\bigg(
 \sum\limits_{\substack{(s_1,\dots,s_{m+l})\in\\ P_{\{r_1,\dots,r_{m+l}\}}}}
 {\mathbf f}^{\mathbf A,\mathbf B}_{s_1,\dots,s_{m+l}}\bigg)
 S^{m+l}(\mathbf c_{r_1}\otimes\dots\otimes\mathbf c_{r_{m+l}})^T= \label{eq:3.17}\\
& \sum\limits_{1\leq r_1\leq\dots\leq r_{m+l}\leq R}{\mathbf{\phi}}(\mathbf A,\mathbf B)_{r_1,\dots,r_{m+l}}
  S^{m+l}(\mathbf c_{r_1}\otimes\dots\otimes\mathbf c_{r_{m+l}})^T,\notag
 \end{align}
 where
\begin{equation*}
\begin{split}
  {\mathbf f}^{\mathbf A,\mathbf B}_{s_1,\dots,s_{m+l}}:=
   &\Lambda^m(\mathbf a_{s_1}\otimes\dots\otimes \mathbf a_{s_{m}})\otimes\mathbf a_{s_{m+1}}\otimes\dots\otimes\mathbf a_{s_{m+l}}\otimes\\
  &\Lambda^m(\mathbf b_{s_1}\otimes\dots\otimes \mathbf b_{s_{m}})\otimes\mathbf b_{s_{m+1}}\otimes\dots\otimes\mathbf b_{s_{m+l}},\\
  {\mathbf{\phi}}(\mathbf A,\mathbf B)_{r_1,\dots,r_{m+l}}:=&\sum\limits_{\substack{(s_1,\dots,s_{m+l})\in\\ P_{\{r_1,\dots,r_{m+l}\}}}}
  {\mathbf f}^{\mathbf A,\mathbf B}_{s_1,\dots,s_{m+l}}.
  \end{split}
  \end{equation*}
  We show that ${\mathbf{\phi}}(\mathbf A,\mathbf B)_{r_1,\dots,r_{m+l}}$ is the zero vector if
  the set $\{r_1,\dots,r_{m+l}\}$  has fewer than $m$  distinct elements and that
  ${\mathbf{\phi}}(\mathbf A,\mathbf B)_{r_1,\dots,r_{m+l}}$ is a column of
  ${\mathbf\Phi}_{m,l}(\mathbf A,\mathbf B)$ otherwise.
  From \eqref{eq:jtheq1kron} and the  Leibniz formula for the determinant it follows that
  the entries of the vector  $\Lambda^k(\mathbf d_1\otimes\dots\otimes \mathbf d_k)$ are all possible $k\times k$ minors of the
  matrix $\mathbf D:=[\mathbf d_1\ \dots\  \mathbf d_k]$ divided by $k!$. In particular, if some of the vectors $\mathbf d_i$ coincide, then 
  $\Lambda^k(\mathbf d_1\otimes\dots\otimes \mathbf d_k)$ is the zero vector.
  Hence, the vector ${\mathbf f}^{\mathbf A,\mathbf B}_{s_1,\dots,s_{m+l}}$ has entries
   \begin{equation*}
     \begin{split}
     \frac{1}{(m!)^2}
       \det&\left[\begin{matrix}
       a_{i_1s_1}&\dots & a_{i_1s_m}\\
       \vdots       &\vdots& \vdots\\
       a_{i_ms_1}&\dots & a_{i_ms_m}
       \end{matrix}\right]\cdot
       \det\left[\begin{matrix}
                b_{j_1s_1}&\dots & b_{j_1s_m}\\
                \vdots       &\vdots& \vdots\\
                b_{j_ms_1}&\dots & b_{j_ms_m}
                \end{matrix}\right]\cdot\\
      &\qquad  a_{i_{m+1}s_{m+1}}\cdots a_{i_{m+l}s_{m+l}}\cdot
      b_{j_{m+1}s_{m+1}}\cdots b_{j_{m+l}s_{m+l}},
       \end{split} 
       \end{equation*}
       where
$i_1,\dots,i_{m+l}\in\{1,\dots,I\}$ and $j_1,\dots,j_{m+l}\in\{1,\dots,J\}$.
   In particular, if the set $\{r_1,\dots,r_{m+l}\}$  has fewer than $m$  distinct elements, then
  ${\mathbf f}^{\mathbf A,\mathbf B}_{s_1,\dots,s_{m+l}}$ are zero vectors for all
  $(s_1,\dots,s_{m+l})\in P_{\{r_1,\dots,r_{m+l}\}}$, yielding that ${\mathbf{\phi}}(\mathbf A,\mathbf B)_{r_1,\dots,r_{m+l}}$ is the zero vector.
   Hence, by Definition \ref{def:matrixPhi}, the matrix
  ${\mathbf\Phi}_{m,l}(\mathbf A,\mathbf B)$ has columns ${\mathbf{\phi}}(\mathbf A,\mathbf B)_{r_1,\dots,r_{m+l}}$, where
  $(r_1,\dots,r_{m+l})$ satisfies \eqref{eq:mplusltuples}.
  Thus, \eqref{eq:3.17} coincides with \eqref{eq:mainidentity}.
\section{Proof of Lemma \ref{lemma:suppl}} \label{section:Appendix}
The proof of Lemma \ref{lemma:suppl} is based on the
following simple generalization of the rank-nullity theorem and relies on two bounds that will be obtained in 
in \ref{App2} and  \ref{App1}, respectively.
\begin{lemma}\label{lemmasupplmats12}
Let $\mathbf X$ be a matrix and $ E$ be a subspace such that $\textup{range}(\mathbf X^T)\subseteq E$. Then
\begin{itemize}
\item[\textup{(i)}] $\dim(\ker (\mathbf X)\cap E) + \dim(\mathbf X(E)) = \dim E$;
\item[\textup{(ii)}] $\textup{range}(\mathbf X)=\mathbf X(E)$,
\end{itemize}
where the subspace $\mathbf X(E)$ denotes the image of $E$ under $\mathbf X$.
\end{lemma}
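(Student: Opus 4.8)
The plan is to exploit the orthogonal decomposition of the domain of $\mathbf X$ into its row space and null space, namely $\mathbb R^{n}=\textup{range}(\mathbf X^{T})\oplus\ker(\mathbf X)$ for $\mathbf X\in\mathbb R^{m\times n}$, together with the hypothesis $\textup{range}(\mathbf X^{T})\subseteq E$. It is worth noting at the outset that part \textup{(i)} turns out to be a direct instance of the rank--nullity theorem and does not actually require the hypothesis, whereas the hypothesis is precisely what drives the nontrivial inclusion in part \textup{(ii)}.

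For \textup{(i)}, I would consider the restriction $\mathbf X|_{E}\colon E\to\mathbb R^{m}$ of $\mathbf X$ to the subspace $E$, viewed as a linear map on the finite-dimensional space $E$. Its kernel is exactly $\{\mathbf v\in E:\mathbf X\mathbf v=\mathbf 0\}=\ker(\mathbf X)\cap E$ and its image is exactly $\mathbf X(E)$. Applying the rank--nullity theorem to $\mathbf X|_{E}$ then gives $\dim(\ker(\mathbf X)\cap E)+\dim(\mathbf X(E))=\dim E$, which is \textup{(i)}.

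For \textup{(ii)}, the inclusion $\mathbf X(E)\subseteq\textup{range}(\mathbf X)$ is immediate since $E\subseteq\mathbb R^{n}$. For the reverse inclusion I would argue $\textup{range}(\mathbf X)=\mathbf X(\mathbb R^{n})=\mathbf X\bigl(\textup{range}(\mathbf X^{T})\oplus\ker(\mathbf X)\bigr)=\mathbf X\bigl(\textup{range}(\mathbf X^{T})\bigr)$, where the last equality holds because $\mathbf X$ vanishes on $\ker(\mathbf X)$. Since the hypothesis gives $\textup{range}(\mathbf X^{T})\subseteq E$, we obtain $\textup{range}(\mathbf X)=\mathbf X(\textup{range}(\mathbf X^{T}))\subseteq\mathbf X(E)$, completing the equality. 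Equivalently, one may take an arbitrary $\mathbf y=\mathbf X\mathbf v\in\textup{range}(\mathbf X)$, split $\mathbf v=\mathbf v_{\mathrm r}+\mathbf v_{\mathrm n}$ with $\mathbf v_{\mathrm r}\in\textup{range}(\mathbf X^{T})$ and $\mathbf v_{\mathrm n}\in\ker(\mathbf X)$, and observe that $\mathbf y=\mathbf X\mathbf v_{\mathrm r}$ with $\mathbf v_{\mathrm r}\in E$.

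There is essentially no serious obstacle here: the only point requiring care is recognizing that the hypothesis $\textup{range}(\mathbf X^{T})\subseteq E$ is exactly the condition needed so that the row-space component of any preimage already lies in $E$, which is what makes \textup{(ii)} work. Everything else is standard rank--nullity bookkeeping, and the lemma is phrased at this level of generality precisely so that it can later be applied with $\mathbf X=\mathbf S_{m+l}(\mathbf C)^{T}$ and $E=S^{m+l}(\mathbb R^{K^{m+l}})$, for which the containment $\textup{range}(\mathbf S_{m+l}(\mathbf C))\subseteq S^{m+l}(\mathbb R^{K^{m+l}})$ is evident from Definition \ref{def:matrixS}.
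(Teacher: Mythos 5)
Your proof is correct and follows essentially the same route as the paper: part \textup{(i)} is the rank--nullity theorem applied to the restriction of $\mathbf X$ to $E$ (the paper merely phrases this via a basis matrix $\mathbf P$ of $E$ and the product $\mathbf X\mathbf P$), and part \textup{(ii)} rests on the same observation that $\textup{range}(\mathbf X)=\mathbf X(\textup{range}(\mathbf X^T))\subseteq\mathbf X(E)$ because any preimage can be replaced by its row-space component. Your remarks that \textup{(i)} does not need the hypothesis and that the hypothesis is exactly what drives \textup{(ii)} are accurate and consistent with the paper's argument.
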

\begin{proof}
Let $\mathbf P$ be a matrix whose columns form a basis for the subspace $E$. Then
$\dim(\ker (\mathbf X)\cap E)=\dim(\ker (\mathbf X\mathbf P))$,  $\mathbf X(E) = \textup{range}(\mathbf X\mathbf P)$, and
the matrix $\mathbf X\mathbf P$ has $\dim E$ columns.
Hence, by the rank-nullity theorem,
$$
\dim(\ker (\mathbf X)\cap E) + \dim(\mathbf X(E)) = 
\dim(\ker (\mathbf X\mathbf P))+\dim(\mathbf X\mathbf P)=
\dim E. 
$$
Since, $\textup{range}(\mathbf X^T)\subseteq \textup{range}(\mathbf P)$, it follows that $\textup{range}(\mathbf X)=
\textup{range}(\mathbf X\mathbf P)=\mathbf X(E)$.\qquad
\end{proof}
{\em Proof of Lemma \ref{lemma:suppl}.}
  We set $\mathbf X=\mathbf S_{m+l}(\mathbf C)^T$ and $E=S^{m+l}(\mathbb R^{K^{m+l}})$. Then statement \textup{(iii)} follows from Lemma \ref{lemmasupplmats12} \textup{(ii)}.
By Lemma \ref{lemmasupplmats12} \textup{(i)}, 
\begin{equation}
\label{eq:lastequation}
\dim  \left(\ker (\mathbf S_{m+l}(\mathbf C)^T)\bigcap S^{m+l}(\mathbb R^{K^{m+l}})\right)+
r_{\mathbf S_{m+l}(\mathbf C)^T}= C^{m+l}_{K+m+l-1}.
\end{equation}
In \ref{App2} and  \ref{App1} we prove that the summands in \eqref{eq:lastequation} are bounded as
\begin{align*}
\dim  \left(\ker (\mathbf S_{m+l}(\mathbf C)^T)\bigcap S^{m+l}(\mathbb R^{K^{m+l}})\right)&\geq C^{K-1}_R,
\text{ and}
\\
r_{\mathbf S_{m+l}(\mathbf C)^T}&\geq C^{m+l}_{R+l+1}-C^{m-1}_R, 
\end{align*}
respectively.
Since $C^{K-1}_R + (C^{m+l}_{R+l+1}-C^{m-1}_R) = C^{m-1}_R + (C^{m+l}_{K+m+l-1}-C^{m-1}_R)=C^{m+l}_{K+m+l-1}$,
statements \textup{(i)} and \textup{(iv)} follow from \eqref{eq:lastequation}. Statement \textup{(ii)} follows from
statement \textup{(i)} and Lemma \ref{lemma5.1111} below.
\qed
\subsection{A lower bound on 
$\dim  \left(\ker (\mathbf S_{m+l}(\mathbf C)^T)\bigcap S^{m+l}(\mathbb R^{K^{m+l}})\right)$}\label{App2}
In this subsection we prove the following result.
\begin{lemma}\label{lemma5.1111}
Let $\mathbf C\in\mathbb R^{K\times R}$, $k_{\mathbf C}=K$, $m=R-K+2$, $l\geq 0$,  let $\mathbf F$  satisfy \eqref{P1}--\eqref{P2}, and let $\mathbf F^{(m+l)}$ be defined by \eqref{eq:Fm}. Then
\begin{itemize}
\item[\textup{(i)}] The matrix $\mathbf F^{(m+l)}$ has full column rank, that is $r_{\mathbf F^{(m+l)}}=C^{K-1}_R$;
\item[\textup{(ii)}] $
\ker (\mathbf S_{m+l}(\mathbf C)^T)\bigcap S^{m+l}(\mathbb R^{K^{m+l}})\supset\textup{range}\left(\mathbf F^{(m+l)}\right)$.
\end{itemize}
In particular, 
 $\dim  \left(\ker (\mathbf S_{m+l}(\mathbf C)^T)\bigcap S^{m+l}(\mathbb R^{K^{m+l}})\right)\geq C^{K-1}_R$.
\end{lemma}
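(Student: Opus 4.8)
The plan is to prove the two displayed claims separately and then read off the dimension bound. Claim \textup{(ii)} is the geometric heart, and I would attack it by a direct computation showing each pure power $\mathbf f^{\otimes(m+l)}$ lies in the intersection; claim \textup{(i)} I would reduce to the full-column-rank of $\mathbf F^{(m+l-1)}$, which is already available, and then promote it by one Khatri--Rao factor via an elementary contraction argument.

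For \textup{(ii)}, first note that the columns of $\mathbf F^{(m+l)}=\mathbf F\odot\dots\odot\mathbf F$ are exactly the pure powers $\mathbf f^{\otimes(m+l)}=\underbrace{\mathbf f\otimes\dots\otimes\mathbf f}_{m+l}$ as $\mathbf f$ ranges over the columns of $\mathbf F$. Each such vector is the vectorization of a symmetric tensor, hence lies in $S^{m+l}(\mathbb R^{K^{m+l}})$, so it remains only to check membership in $\ker(\mathbf S_{m+l}(\mathbf C)^T)$. By Definition~\ref{def:matrixS}, the entry of $\mathbf S_{m+l}(\mathbf C)^T\mathbf f^{\otimes(m+l)}$ indexed by a tuple $(r_1,\dots,r_{m+l})$ satisfying \eqref{eq:mplusltuples} equals $\frac{1}{(m+l)!}\sum_{(s_1,\dots,s_{m+l})\in P_{\{r_1,\dots,r_{m+l}\}}}\prod_{p=1}^{m+l}(\mathbf c_{s_p}^T\mathbf f)$, and since the product is invariant under permutation of its factors this collapses to $\prod_{q=1}^{m+l}(\mathbf c_{r_q}^T\mathbf f)$ (this is precisely the identity $\mathbf S_{m+l}(\mathbf C)^T(\mathbf f\otimes\dots\otimes\mathbf f)=\mathbf S_{m+l}(\mathbf f^T\mathbf C)^T$ used in the proof of Lemma~\ref{some:lemma}). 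Now I would invoke \eqref{P1}: $\mathbf f$ is orthogonal to exactly $K-1$ columns of $\mathbf C$, hence non-orthogonal to exactly $R-(K-1)=m-1$ of them. Because every admissible tuple has at least $m$ distinct entries while only $m-1$ columns give a nonzero factor, the pigeonhole principle forces at least one factor $\mathbf c_{r_q}^T\mathbf f$ to vanish, so every entry is zero and $\mathbf f^{\otimes(m+l)}\in\ker(\mathbf S_{m+l}(\mathbf C)^T)\cap S^{m+l}(\mathbb R^{K^{m+l}})$.

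For \textup{(i)}, I would use that each column of $\mathbf F$ is nonzero (spanning the one-dimensional orthogonal complement of $K-1$ columns of $\mathbf C$) together with the full-column-rank of $\mathbf F^{(m+l-1)}$; the latter holds under $k_{\mathbf C}=K$ by \cite[Proposition 1.10]{LinkGEVD}, which is exactly the full-column-rank assertion in \eqref{eq:beforeFFF} and is an external fact, so no circularity with Lemma~\ref{lemma:suppl} arises. It then suffices to lift full column rank from $m+l-1$ to $m+l$ factors: if $\sum_i\alpha_i\,\mathbf f_i^{\otimes(m+l)}=\mathbf 0$, then contracting the last tensor mode against an arbitrary $\mathbf x\in\mathbb R^K$ gives $\sum_i\alpha_i(\mathbf x^T\mathbf f_i)\,\mathbf f_i^{\otimes(m+l-1)}=\mathbf 0$, so full column rank of $\mathbf F^{(m+l-1)}$ yields $\alpha_i(\mathbf x^T\mathbf f_i)=0$ for all $i$, and taking $\mathbf x=\mathbf f_i$ with $\mathbf f_i\neq\mathbf 0$ forces $\alpha_i=0$. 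Hence $\mathbf F^{(m+l)}$ has full column rank $C^{K-1}_R$, the number of columns of $\mathbf F$.

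Finally, the ``in particular'' clause is immediate: by \textup{(ii)} the intersection contains $\textup{range}(\mathbf F^{(m+l)})$, whose dimension is $C^{K-1}_R$ by \textup{(i)}, so the intersection has dimension at least $C^{K-1}_R$. I expect the main obstacle to be the bookkeeping in \textup{(ii)} --- matching the normalization $1/(m+l)!$ against the $(m+l)!$ permutations counted with multiplicity so that the entry genuinely collapses to the single product $\prod_q(\mathbf c_{r_q}^T\mathbf f)$ --- together with the careful use of the ``at least $m$ distinct entries'' condition \eqref{eq:mplusltuples}, which is exactly what the choice $m=R-K+2$ is tuned to make the pigeonhole count ($m$ versus $m-1$) succeed.
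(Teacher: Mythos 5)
Your proof is correct and follows essentially the same route as the paper: part (ii) is verified by exactly the same entrywise computation, namely that the $(r_1,\dots,r_{m+l})$th entry of $\mathbf S_{m+l}(\mathbf C)^T\mathbf f^{\otimes(m+l)}$ collapses to $(\mathbf c_{r_1}^T\mathbf f)\cdots(\mathbf c_{r_{m+l}}^T\mathbf f)$, followed by the same pigeonhole count (at least $m$ distinct indices versus only $m-1=R-K+1$ columns of $\mathbf C$ not orthogonal to $\mathbf f$). The only cosmetic difference is in (i): the paper cites \cite[Proposition 1.10]{LinkGEVD} directly for the full column rank of $\mathbf F^{(m+l)}$, whereas you cite that external fact for $\mathbf F^{(m+l-1)}$ (as in \eqref{eq:beforeFFF}) and lift it by one Khatri--Rao factor via your (correct) contraction argument, a slightly more self-contained variant of the same step.
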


\begin{proof}
Statement \textup{(i)} was proved in  \cite[Proposition 1.10]{LinkGEVD}. 

Let  $\mathbf f$ be a column of the matrix $\mathbf F$. Then $\mathbf f^{(m+l)}$ is a column of $\mathbf F^{(m+l)}$.
It is clear that $\mathbf f^{(m+l)}\in S^{m+l}(\mathbb R^{K^{m+l}})$. To prove \textup{(ii)} we need to show that
$\mathbf S_{m+l}(\mathbf C)^T\mathbf f^{(m+l)}=\mathbf 0$. By Definition \ref{def:matrixS}, the $(r_1,\dots,r_{m+l})$th entry of the vector
$\mathbf S_{m+l}(\mathbf C)^T\mathbf f^{(m+l)}$ is 
 \begin{equation*}
\begin{split}
          &\Bigg(\frac{1}{(m+l)!} \sum\limits_{\substack{(s_1,\dots,s_{m+l})\in\\ P_{\{r_1,\dots,r_{m+l}\}}}}\mathbf c_{s_1}\otimes\dots\otimes\mathbf c_{s_{m+l}}\Bigg)^T\mathbf f^{(m+l)}=\\
          &\frac{1}{(m+l)!} \sum\limits_{\substack{(s_1,\dots,s_{m+l})\in\\ P_{\{r_1,\dots,r_{m+l}\}}}}
           (\mathbf c_{s_1}^T\mathbf f)\cdots(\mathbf c_{s_{m+l}}^T\mathbf f)=
          (\mathbf c_{r_1}^T\mathbf f)\cdots(\mathbf c_{r_{m+l}}^T\mathbf f).
          \end{split}
\end{equation*}
Since the vector $\mathbf f$ is orthogonal to exactly $K-1$ columns of $\mathbf C$, the fact that at least
 $m$ indices of $r_1,\dots,r_{m+l}$ are distinct, and $m+l\geq R-K+2+l>R-(K-1)$ it follows that  $(\mathbf c_{r_1}^T\mathbf f)\cdots(\mathbf c_{r_{m+l}}^T\mathbf f)=0$, which completes the proof of \textup{(ii)}.
\end{proof}
\subsection{A lower bound on the rank of the matrix $\mathbf S_{m+l}(\mathbf C)$}\label{App1}
We need some additional notation.
Let $m\geq 2$, $l\geq 0$, $p\geq 0$ and 
$m\leq m+l-p\leq R$.
 With an $(m+l-p)$-tuple
$$
(i_1,\dots,i_{m+l-p})\text{ such that } 1\leq i_1<\dots <i_{m+l-p}\leq R
$$
we associate  the set 
$$
E_{i_1,\dots,i_{m+l-p}}:=\{(i_1,\dots,i_{m+l-p},i_{q_1},\dots,i_{q_p}):\ 1\leq q_1\leq\dots\leq q_p\leq 2+l-p\}.
$$
In other words, the set $E_{i_1,\dots,i_{m+l-p}}$ consists of the $(m+l)$-tuples that are obtained by merging the $(m+l-p)$-tuple
 $(i_1,\dots,i_{m+l-p})$ with  $p$-combinations with repetitions of the set $\{i_1,\dots,i_{2+l-p}\}$.
It is clear that for a fixed $p$ there exist $C^{m+l-p}_R$ sets $E_{i_1,\dots,i_{m+l-p}}$ and each set
 $E_{i_1,\dots,i_{m+l-p}}$ contains $C^{p}_{2+l-p+p-1}=C^p_{l+1}$ $(m+l)$-tuples.
Let $E$  be the union of all sets $E_{i_1,\dots,i_{m+l-p}}$.
Then  the set  $E$ contains exactly
$$
C^0_{l+1}C^{m+l}_R+C^1_{l+1}C^{m+l-1}_R+\dots+C^l_{l+1}C^{m}_R = C^{m+l}_{R+l+1}-C^{m-1}_R
$$
$(m+l)$-tuples (we follow the convention that $C^{m+l-p}_R:=0$ if $m+l-p>R$).
Since, by construction,  each $(m+l)$-tuple of $E_{i_1,\dots,i_{m+l-p}}$ contains exactly 
$m+l-p\geq m$ distinct elements, it follows that   each $(m+l)$-tuple of $E$  contains
at least $m$ distinct elements. 
Let $\mathbf S^E_{m+l}(\mathbf C)$ denote the  $K^m\times (C^{m+l}_{R+l+1}-C^{m-1}_R)$ matrix with columns \eqref{eq:def2.3star}, where  $(r_1,\dots,r_{m+l})\in E$.
Then  $\mathbf S^E_{m+l}(\mathbf C)$ is a submatrix of 
$\mathbf S_{m+l}(\mathbf C)$.  We have the following lemma.
\begin{lemma}\label{lemma5.1}
Let $\mathbf C\in\mathbb R^{K\times R}$, $k_{\mathbf C}=K$, and $m=R-K+2$. Then the matrix
$\mathbf S^E_{m+l}(\mathbf C)$ has full column rank. In particular,  $r_{\mathbf S_{m+l}(\mathbf C)} \geq C^{m+l}_{R+l+1}-C^{m-1}_R$.
\end{lemma}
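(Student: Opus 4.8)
The plan is to recast the statement as a linear‑independence assertion about products of linear forms and then to peel these off by restricting a hypothetical dependence to cleverly chosen subspaces. First I would identify the columns of $\mathbf S^E_{m+l}(\mathbf C)$: by Definition \ref{def:matrixS} they are the symmetric tensors $\mathbf s_M:=\frac{1}{(m+l)!}\sum_{(s_1,\dots,s_{m+l})\in P_{\{r_1,\dots,r_{m+l}\}}}\mathbf c_{s_1}\otimes\dots\otimes\mathbf c_{s_{m+l}}\in S^{m+l}(\mathbb R^{K^{m+l}})$ indexed by the tuples $M=(r_1,\dots,r_{m+l})\in E$. Contracting with $\mathbf x\otimes\dots\otimes\mathbf x$ gives $\mathbf s_M^T(\mathbf x\otimes\dots\otimes\mathbf x)=\prod_{t=1}^{m+l}(\mathbf c_{r_t}^T\mathbf x)=:p_M(\mathbf x)$, a product of the linear forms $\ell_r(\mathbf x):=\mathbf c_r^T\mathbf x$. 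Since the vectors $\mathbf x\otimes\dots\otimes\mathbf x$ span $S^{m+l}(\mathbb R^{K^{m+l}})$, a combination $\sum_M\gamma_M\mathbf s_M$ vanishes if and only if the homogeneous degree-$(m+l)$ polynomial $\sum_M\gamma_M p_M$ vanishes identically. Thus full column rank of $\mathbf S^E_{m+l}(\mathbf C)$ is equivalent to linear independence of $\{p_M:M\in E\}$; as $\mathbf S^E_{m+l}(\mathbf C)$ is a submatrix of $\mathbf S_{m+l}(\mathbf C)$ with $\textup{card}\,E=C^{m+l}_{R+l+1}-C^{m-1}_R$ columns, the rank bound follows at once.

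The engine is a restriction argument powered by $k_{\mathbf C}=K$. For a tuple $M$ whose set $D$ of distinct entries has size $d$ (with $m\leq d\leq m+l$ by \eqref{eq:mplusltuples}), I would set $V_M:=(\operatorname{span}\{\mathbf c_j:j\notin D\})^\perp$. Because $|D^c|=R-d\leq R-m=K-2$, these columns are linearly independent, so $\dim V_M=d-m+2=:K'$. Restricting a polynomial to $V_M$ annihilates every $p_{M'}$ whose distinct-entry set is not contained in $D$. Crucially, the restricted forms $\{\ell_r|_{V_M}:r\in D\}$ remain in general position in $V_M$: a linear relation among any $K'$ of them would lift through $V_M^\perp=\operatorname{span}\{\mathbf c_j:j\notin D\}$ to a relation among $K'+(R-d)=K$ distinct columns of $\mathbf C$, contradicting $k_{\mathbf C}=K$.

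I would then prove $\gamma_M=0$ by induction on $d$, from $d=m$ up to $d=m+l$. Given $M$ with $d$ distinct entries, restrict the relation to $V_M$: terms whose distinct entries are not inside $D$ die, terms whose distinct entries are strictly inside $D$ are already zero by induction, so only the terms with distinct-entry set exactly $D$ survive. By the very definition of the sets $E_{i_1,\dots,i_{m+l-p}}$, those survivors are precisely the products $\bigl(\prod_{t=1}^{K'}\ell_{i_t}^{1+b_t}\bigr)\ell_{i_{K'+1}}\cdots\ell_{i_d}$ over the sorted set $D=\{i_1<\dots<i_d\}$, with $b_t\geq 0$ and $\sum_t b_t=m+l-d$, i.e.\ the excess multiplicities are confined to the first $d-m+2=K'$ indices. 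On $V_M$ the trailing $d-K'=m-2$ factors form a fixed nonzero form, while $\ell_{i_1}|_{V_M},\dots,\ell_{i_{K'}}|_{V_M}$ are a basis of $V_M$ by the previous paragraph; passing to that basis turns the survivors into this fixed form times the pairwise distinct monomials $\prod_t y_t^{1+b_t}$. As multiplication by a nonzero polynomial is injective, these are linearly independent, forcing the corresponding $\gamma_M$ to vanish and closing the induction.

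The main obstacle is this last step: matching the combinatorial design of $E$ to the geometry of the restriction so that the surviving columns collapse to distinct monomials times one common factor. It rests on two facts I would isolate as sub-lemmas — that restriction to $V_M$ preserves general position (so the first $K'$ restricted forms genuinely form a basis), and that the rule ``excess multiplicity lives on the first $d-m+2$ distinct indices'' built into $E$ is exactly what makes the resulting exponent vectors range over all compositions, hence the monomials distinct and equal in number to $\textup{card}\,E$. The base case $d=m$ (so $K'=2$, binary forms) needs no inductive hypothesis and serves as a clean sanity check.
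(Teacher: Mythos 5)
Your proof is correct and follows essentially the same route as the paper's: your subspace $V_M$ is exactly the span of the dual-basis vectors $\mathbf u_1,\dots,\mathbf u_{2+l-p}$ that the paper extracts from $\widetilde{\mathbf C}^{-T}$, your ascending induction on the number $d$ of distinct indices is the paper's descending induction on $p=m+l-d$, and both arguments finish by factoring out a (generically) nonzero product of linear forms and invoking linear independence of the distinct monomials that the combinatorial design of $E$ produces. The only difference is presentational: you argue intrinsically (restriction of polynomials to $V_M$, integral-domain cancellation) where the paper works in explicit coordinates $t_1,\dots,t_{2+l-p}$ and with ``generic $t$''.
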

\begin{proof}
Suppose that there exists $\mathbf f\in\mathbb R^{C^{m+l}_{R+l+1}-C^{m-1}_R}$ such that $\mathbf S^E_{m+l}(\mathbf C){\mathbf f}=\mathbf 0$.
We show that ${\mathbf f}=\mathbf 0$. We assume that the entries of $\mathbf f$ are indexed by  
$(m+l)$-tuples $(r_1,\dots,r_{m+l})\in E$, that is,
in $\mathbf S^E_{m+l}(\mathbf C){\mathbf f}=\mathbf 0$ the column of $\mathbf S^E_{m+l}(\mathbf C)$
associated with the $(m+l)$-tuple $(i_1,\dots,i_{m+l-p},i_{q_1},\dots,i_{q_p})$ is multiplied by
$f_{i_1,\dots,i_{m+l-p},i_{q_1},\dots,i_{q_p}}$. 

To show that all entries $f_{i_1,\dots,i_{m+l-p},i_{q_1},\dots,i_{q_p}}$ are  zero we proceed by induction on $p=l,l-1,\dots,\max(0,m+l-R)$: in the $p$th step we assume that the identities 
\begin{multline*}
 f_{i_1,\dots,i_{m+l-\tilde p},i_{q_1},\dots,i_{q_{\tilde p}}}=0,\ \ \text{where}\\
1\leq i_1<\dots <i_{m+l-\tilde p}\leq R, \ \  1\leq q_1\leq\dots\leq q_{\tilde p}\leq 2+l-\tilde p
\end{multline*}
hold for $\tilde p =l,l-1,\dots,p-1$ and prove that the identities hold for $\tilde p=p$.

\textup{(i)} Induction hypothesis: $p=l$. We show that  
$$
f_{i_1,\dots,i_m,i_{q_1},\dots,i_{q_l}}=0\quad  \text{ for } \quad
1\leq i_1<\dots<i_{m}\leq R, \quad 1\leq q_1\leq\dots\leq q_l\leq 2.
$$
We give the proof for the case $i_1=1,\dots,i_m=m$, the other cases follow similarly.
Thus, we show that
$
f_{1,\dots,m,q_1,\dots,q_l}=0$ for $1\leq q_1\leq\dots\leq q_l\leq 2$.

Since $k_{\mathbf C}=K$, the square matrix $\widetilde{\mathbf C}:=[\mathbf c_1\ \mathbf c_2\ \mathbf c_{m+1}\dots\ \mathbf c_R]$ is nonsingular.
Let $\mathbf u_1$ and $\mathbf u_2$ denote the first and the second column of 
$\widetilde{\mathbf C}^{-T}$, respectively. Then
\begin{equation}
\label{eq:u1u2}
\left[
\begin{matrix}
\mathbf u_1^T\\
\mathbf u_2^T
\end{matrix}
\right]\widetilde{\mathbf C}=
\left[
\begin{matrix}
 1&0&0&\dots&0\\
 0&1&0&\dots&0\\
\end{matrix}
\right].
\end{equation}
Let $\mathbf x=t_1\mathbf u_1+t_2\mathbf u_2$. Then 
the vector $\mathbf x^{(m+l)}:=\mathbf x\otimes\dots\otimes\mathbf x$ is orthogonal to the columns of the matrix $\mathbf S^E_{m+l}(\mathbf C)$ indexed by
the $(m+l)$-tuples $(r_1,\dots,r_{m+l})\in E\setminus E_{1,\dots, m}$.
Indeed, if $\{r_1,\dots,r_{m+l}\}\setminus \{1,\dots,m\}\ne\emptyset$, then  by \eqref{eq:def2.3star} and \eqref{eq:u1u2},
  \begin{equation}
     \frac{\mathbf x^{(m+l)T}}{(m+l)!} \sum\limits_{\substack{(s_1,\dots,s_{m+l})\in\\ P_{\{r_1,\dots,r_{m+l}\}}}}\mathbf c_{s_1}\otimes\dots\otimes\mathbf c_{s_{m+l}}=(\mathbf x^T \mathbf c_{r_1})\cdots (\mathbf x^T \mathbf c_{r_{m+l}})=0.
       \label{eq:def2.35.5}
      \end{equation}
Hence
\begin{equation}
\label{eq:last}
\begin{split}
0&= \mathbf x^{(m+l)T}\mathbf S^E_{m+l}(\mathbf C){\mathbf f}=
\sum\limits_{(r_1,\dots,r_{m+l})\in E_{1,\dots, m}}
(\mathbf x^T \mathbf c_{r_1})\cdots (\mathbf x^T \mathbf c_{r_{m+l}})f_{r_1,\dots,r_{m+l}}=\\
&\sum\limits_{1\leq q_1\leq\dots\leq q_l\leq 2}
(\mathbf x^T \mathbf c_1)\cdots (\mathbf x^T \mathbf c_m)(\mathbf x^T \mathbf c_{q_1})\cdots (\mathbf x^T \mathbf c_{q_l}) f_{1,\dots, m, q_1,\dots, q_l}=\\
&(\mathbf x^T \mathbf c_1)\cdots (\mathbf x^T \mathbf c_m)\sum\limits_{1\leq q_1\leq\dots\leq q_l\leq 2}
(\mathbf x^T \mathbf c_{q_1})\cdots (\mathbf x^T \mathbf c_{q_l}) f_{1,\dots, m, q_1,\dots, q_l}.
\end{split}
\end{equation}
Since $k_{\mathbf C}=K$, at most one of the vectors $\mathbf u_1$ and $\mathbf u_2$ can be orthogonal to any of the vectors
$\mathbf c_3,\dots,\mathbf c_m$. Hence, 
$$
(\mathbf x^T \mathbf c_1)\cdots (\mathbf x^T \mathbf c_m)=
t_1t_2(t_1\mathbf u_1^T \mathbf c_3+t_2\mathbf u_2^T \mathbf c_3)\cdots (t_1\mathbf u_1^T \mathbf c_m+t_2\mathbf u_2^T \mathbf c_m)\ne 0
$$
for generic $t_1,t_2\in \mathbb R$. Hence, by \eqref{eq:last},
\begin{equation}
\label{eq:pol=0}
\sum\limits_{1\leq q_1\leq\dots\leq q_l\leq 2}
(\mathbf x^T \mathbf c_{q_1})\cdots (\mathbf x^T \mathbf c_{q_l}) f_{1,\dots, m, q_1,\dots, q_l}=0
\end{equation}
for generic $t_1,t_2\in \mathbb R$. 
By construction of $\mathbf x$, the $l+1$ products\\ $(\mathbf x^T \mathbf c_{q_1})\cdots (\mathbf x^T \mathbf c_{q_l})$, $1\leq q_1\leq\dots\leq q_l\leq 2$,
coincide with the monomials $t_1^lt_2^0,t_1^{l-1}t_2^1,\dots,t_1^0t_2^l$. Thus, identity 
\eqref{eq:pol=0} expresses the fact that a polynomial in $t_1$ and $t_2$ 
with coefficients $f_{1,\dots, m, q_1,\dots, q_l}$ vanishes for generic  $t_1,t_2\in \mathbb R$.
It is well known that this is possible only if the polynomial is  identically zero, yielding that $f_{1,\dots,m,q_1,\dots,q_l}=0$ for
$1\leq q_1\leq\dots\leq q_l\leq 2$.

\textup{(ii)} Inductive step. 
We show that
\begin{multline*}
 f_{i_1,\dots,i_{m+l-p},i_{q_1},\dots,i_{q_{p}}}=0\ \text{ for }\\
1\leq i_1<\dots <i_{m+l- p}\leq R, \ \  1\leq q_1\leq\dots\leq q_{ p}\leq 2+l- p
\end{multline*}
or, equivalently, that $f_{i_1,\dots,i_{m+l-p},i_{q_1},\dots,i_{q_p}}=0$  for 
$$
(i_1,\dots,i_{m+l-p},i_{q_1},\dots,i_{q_p})\in \bigcup\limits_{1\leq i_1<\dots<i_{m+l-p}\leq R} E_{i_1,\dots,i_{m+l-p}}.
$$
We give the proof for the case $i_1=1,\dots,i_{m+l-p}=m+l-p$, the other cases follow similarly.
Thus, we show that
$
f_{1,\dots,{m+l-p},q_1,\dots,q_p}=0$ for $1\leq q_1\leq\dots\leq q_p\leq {2+l-p}$.
The derivation is very similar to that of the induction hypothesis.

Since $k_{\mathbf C}=K$, the $K\times K$ matrix $\widetilde{\mathbf C}:=[\mathbf c_1\ \dots \mathbf c_{2+l-p}\ \mathbf c_{m+l-p+1}\dots\ \mathbf c_R]$ is nonsingular.
Let $\mathbf u_1,\dots,\mathbf u_{2+l-p}$ denote the first $2+l-p$ columns of 
$\widetilde{\mathbf C}^{-T}$. Then
\begin{equation}
\label{eq:u1u2ii}
\left[
\begin{matrix}
\mathbf u_1^T\\
\vdots\\
\mathbf u_{2+l-p}^T
\end{matrix}
\right] 
\widetilde{\mathbf C}=
\left[
\begin{matrix}
 1&0& \dots& 0&0&\dots&0\\
 0&1& \dots& 0&0&\dots&0\\
 \vdots&\vdots  &\vdots &\vdots  &\vdots &\vdots&\vdots\\
 0&0& \dots& 1&0&\dots&0
\end{matrix}
\right].
\end{equation}
Let $\mathbf x=t_1\mathbf u_1+\dots +t_{2+l-p}\mathbf u_{2+l-p}$.  
Let
$$
E_{\tilde p<p} := \bigcup_{\tilde p<p}\ \bigcup\limits_{1\leq i_1<\dots <i_{m+l-\tilde p}\leq R} E_{i_1,\dots,i_{m+l-\tilde p}}
$$
and let the sets $E_{\tilde p>p}$ and $E_{\tilde p=p}$ be defined similarly. Then 
$E=E_{\tilde p<p}\cup E_{\tilde p>p}\cup E_{\tilde p=p}$.
 Then, by  \eqref{eq:def2.3star}, \eqref{eq:def2.35.5}, and \eqref{eq:u1u2ii}, 
the vector $\mathbf x^{(m+l)}:=\mathbf x\otimes\dots\otimes\mathbf x$ is orthogonal to  the columns of the matrix $\mathbf S^E_{m+l}(\mathbf C)$ indexed
 by the 
$(m+l)$-tuples 
$$
(r_1,\dots,r_{m+l})\in E_{\tilde p<p} \cup \left(E_{\tilde p=p}\setminus E_{1,\dots,m+l-p}\right).
$$
Hence, similarly to \eqref{eq:last} we obtain
\begin{equation*}
\begin{split}
0&= \mathbf x^{(m+l)T}\mathbf S^E_{m+l}(\mathbf C){\mathbf f}=
\sum\limits_{(r_1,\dots,r_{m+l})\in E}
(\mathbf x^T \mathbf c_{r_1})\cdots (\mathbf x^T \mathbf c_{r_{m+l}})f_{r_1,\dots,r_{m+l}}=\\
& \sum\limits_{(r_1,\dots,r_{m+l})\in E_{\tilde p>p}\cup E_{1,\dots,m+l-p}}
(\mathbf x^T \mathbf c_{r_1})\cdots (\mathbf x^T \mathbf c_{r_{m+l}})f_{r_1,\dots,r_{m+l}}.
\end{split}
\end{equation*}
Since, by the induction assumption, $f_{r_1,\dots,r_{m+l}}=0$ for $(r_1,\dots,r_{m+l})\in E_{\tilde p>p}$, we have
\begin{equation}
\label{eq:lastii}
\begin{split}
0&=\sum\limits_{(r_1,\dots,r_{m+l})\in  E_{1,\dots,m+l-p}}
(\mathbf x^T \mathbf c_{r_1})\cdots (\mathbf x^T \mathbf c_{r_{m+l}})f_{r_1,\dots,r_{m+l}}=\\
&\sum\limits_{ 1\leq q_1\leq\dots\leq q_{ p}\leq 2+l- p}
 (\mathbf x^T \mathbf c_1)\cdots (\mathbf x^T \mathbf c_{m+l-p})
 (\mathbf x^T \mathbf c_{q_1})\cdots (\mathbf x^T \mathbf c_{q_p})
 f_{1,\dots,{m+l-p},q_1,\dots,q_p}=\\
 & (\mathbf x^T \mathbf c_1)\cdots (\mathbf x^T \mathbf c_{m+l-p})\sum\limits_{ 1\leq q_1\leq\dots\leq q_{ p}\leq 2+l- p}
    (\mathbf x^T \mathbf c_{q_1})\cdots (\mathbf x^T \mathbf c_{q_p})
  f_{1,\dots,{m+l-p},q_1,\dots,q_p}.
\end{split}
\end{equation}
Since $k_{\mathbf C}=K$,  at most $1+l-p$ of the vectors $\mathbf u_1,\dots,\mathbf u_{2+l-p}$ can be  orthogonal to any of the vectors
$\mathbf c_{3+l-p},\dots,\mathbf c_{m+l-p}$. Hence, 
\begin{equation}
\begin{split}
&(\mathbf x^T \mathbf c_1)\cdots (\mathbf x^T \mathbf c_{m+l-p})=\\
& t_1\cdots t_{2+l-p}(t_1\mathbf u_1^T \mathbf c_{3+l-p}+\dots+t_{2+l-p}\mathbf u_{2+l-p}^T \mathbf c_{3+l-p})\cdots\\
& (t_1\mathbf u_1^T \mathbf c_{m+l-p}+\dots+t_{2+l-p}\mathbf u_{2+l-p}^T \mathbf c_{m+l-p})\ne 0
\end{split}
\end{equation}
for generic $t_1,\dots,t_{2+l-p}\in \mathbb R$. Hence, by \eqref{eq:lastii},
\begin{equation}
\label{eq:pol=0ii}
\sum\limits_{ 1\leq q_1\leq\dots\leq q_{ p}\leq 2+l- p}
    (\mathbf x^T \mathbf c_{q_1})\cdots (\mathbf x^T \mathbf c_{q_p})
  f_{1,\dots,{m+l-p},q_1,\dots,q_p}
=0
\end{equation}
for generic $t_1,\dots,t_{2+l-p}\in \mathbb R$. 
By construction of $\mathbf x$, the $C^p_{l+1}$ products $(\mathbf x^T \mathbf c_{q_1})\cdots (\mathbf x^T \mathbf c_{q_p})$,
$1\leq q_1\leq\dots\leq q_p\leq 2+l-p$,
coincide with the  monomials $\{t_1^{\alpha_1}\cdots t_{2+l-p}^{\alpha_{2+l-p}}\}_{\alpha_1+\dots+\alpha_{2+l-p}=p}$. Thus,  identity 
\eqref{eq:pol=0ii} expresses the fact that a polynomial in  $t_1,\dots,t_{2+l-p}$ 
with coefficients $f_{1,\dots,{m+l-p},q_1,\dots,q_p}$ vanishes for generic  $t_1,\dots,t_{2+l-p}\in \mathbb R$.
It is well known that this is possible only if the polynomial is  identically zero, yielding that $f_{1,\dots,{m+l-p},q_1,\dots,q_p}=0$ for $1\leq q_1\leq\dots\leq q_p\leq 2+l-p$.
\end{proof}





\bibliographystyle{plain}

 \begin{itemize}
\item[Address] 
\item[-] Group Science, Engineering and Technology, KU Leuven - Kulak,
 E. Sabbelaan 53, 8500 Kortrijk, Belgium
\item[-] Dept. of Electrical Engineering  ESAT/STADIUS KU Leuven,
 Kasteelpark Arenberg 10, bus 2446, B-3001 Leuven-Heverlee, Belgium
\item[-]iMinds Medical IT
\item[email:]ignat.domanov@kuleuven.be, lieven.delathauwer@kuleuven.be
\end{itemize} 

\end{document}